\numberwithin{equation}{section}
\numberwithin{figure}{section}
\theoremstyle{plain}
\newtheorem{thm}{\protect\theoremname}[section]
\theoremstyle{plain}
\newtheorem{lem}[thm]{\protect\lemmaname}
\theoremstyle{definition}
\newtheorem{defn}[thm]{\protect\definitionname}
\theoremstyle{plain}
\newtheorem{prop}[thm]{\protect\propositionname}
\theoremstyle{remark}
\newtheorem{rem}[thm]{\protect\remarkname}
\theoremstyle{remark}
\newtheorem*{claim*}{\protect\claimname}
\newtheorem{claimABC}{Claim}
\newcommand{\KGL}{K_{\GL}}
\newcommand{\KSL}{K_{\SL}}
\newcommand{\Zimage}{\bar{Z}}
\newcommand{\Cimage}{\bar{C}(g)}
\newcommand{\Cim}{\bar{C}}
\providecommand{\claimname}{Claim}
\providecommand{\definitionname}{Definition}
\providecommand{\lemmaname}{Lemma}
\providecommand{\propositionname}{Proposition}
\providecommand{\remarkname}{Remark}
\providecommand{\theoremname}{Theorem}
\begin{document}
\title[The conjugation representation of $\GL_{2}$ and $\SL_{2}$]{The conjugation representation of $\GL_{2}$ and $\SL_{2}$ over finite
local rings}
\author{Nariel Monteiro and Alexander Stasinski}
\begin{abstract}
The conjugation representation of a finite group $G$ is the complex permutation module defined by the action of $G$ on itself by conjugation. 
Addressing a problem raised by Hain motivated by the study of a Hecke action on iterated Shimura integrals,
Tiep proved that for $G=\SL_{2}(\Z/p^{r})$,
where $r\geq1$ and $p\geq5$ is a prime, any irreducible representation
of $G$ that is trivial on the centre of $G$ is contained in the
conjugation representation. Moreover, 
Tiep asked whether this can be generalised to $p=2$ or $3$. We answer the  Hain--Tiep question in the affirmative
and also prove analogous statements for $\SL_{2}$ and $\GL_{2}$
over any finite local principal ideal ring with residue field of odd
characteristic. 
\end{abstract}

\address{Department of Mathematical Sciences, Durham University, Durham, DH1
3LE, UK}
\email{alexander.stasinski@durham.ac.uk}

\address{Department of Mathematics, University of California, Santa Cruz, CA 95064, USA}
\email{namontei@ucsc.edu}

\maketitle

\tableofcontents

\section{Introduction}
For a finite group $G$, the \emph{conjugation representation} is
the complex permutation representation induced by the action of $G$
on itself by conjugation. The character of this representations is the \emph{conjugation character} of $G$.

A basic problem is to determine the irreducible constituents of the
conjugation character. It is easy to see that any such constituent
must be trivial on the centre of $G$. Two early studies of the
problem were Frame \cite{Frame-conj_rep} and Roth \cite{Roth-conj_rep},
the latter of which conjectured that every irreducible representation
of $G$ that is trivial on the centre is contained in the conjugation
character. Counter-examples were, however, quickly given by Frame
in his MathSciNet review of \cite{Roth-conj_rep} and by Formanek
in \cite{Formanek-conj_rep}. The problem was subsequently taken up
by Passman \cite{Passman-conj_rep} who proved, among other things,
that every irreducible character of a symmetric group (on at least
three letters) is contained in the conjugation character.
In \cite{Heide-Zalesski}, Heide and Zalesski proved that the simple
alternating groups and the sporadic groups satisfy the same property.
It was then proved by Heide, Saxl, Tiep and Zalesski \cite{Heide-Saxl-Tiep-Zalesski}
that any finite simple group of Lie type $G$ satisfies the same property,
unless $G=\text{PSU}_{n}(\F_{q})$ and $n\geq3$ is coprime to $2(q+1)$
(where exactly one irreducible character fails to be a constituent).
It is therefore known that every finite non-abelian simple group other
than these $\text{PSU}_{n}(\F_{q})$ counter-examples,
has the property that every irreducible character appears in the conjugation
character. 

In recent work motivated by a problem raised by Hain \cite{hain2023} arising
in the study of a Hecke action on iterated Shimura integrals, Tiep
\cite{tiep2023conjugation} proved that for $G=\SL_{2}(\Z/p^{r})$,
where $r\geq1$ and $p\geq5$ is a prime, every irreducible character
of $G$ that is trivial on the centre of $G$ is contained in the
conjugation character. Tiep also raised the question of whether 
this can be generalised to $p=2$ or $3$. 

In the present paper we answer the Hain--Tiep question in the affirmative (\cref{thm:Main for SL_2}). 
Let $\cO$ be a DVR with finite residue field $\F_{q}$ of characteristic
$p$ and maximal ideal $\mfp$ and let $\cO_{i}=\cO/\mfp^{i}$ for
any integer $i\geq1$. 
We moreover show that for $G=\SL_{2}(\cO_r)$ and $G=\GL_{2}(\cO_r)$,
with $p>2$, any irreducible character that is trivial on the centre of $G$ is contained in the conjugation character (\cref{thm: main thm when p odd}).

Our method of proof is quite different from that of Tiep \cite{tiep2023conjugation}
for $p\geq5$, even though both approaches show that
for every irreducible character $\rho$ of $G$ that is trivial on
the centre, there is an element $g\in G$ such that $\rho$ restricted
to the centraliser $C_{G}(g)$ contains the trivial character. Moreover, both approaches exploit the fact that most irreducible characters of $G$ are induced from certain subgroups and that Mackey's formula implies that the required condition can be checked on the inducing representation.
However, the elements $g$ used by Tiep in \cite[(7.7)]{tiep2023conjugation}
do not have any useful analogues for $p=2$ or $3$, as for $p=3$,
$g$ is scalar modulo $p$, hence does not satisfy \cite[Lemma~8]{tiep2023conjugation}
and for $p=2$, the element $t(1-p)$ used to define $g$ for $g\geq5$
does not exist in general. We have therefore taken a different approach
to the elements $g$ and in Section~\ref{sec:A-family-of} introduce
the elements $g_{\hat{\beta}}$ attached to a regular matrix $\hat{\beta}$,
which represents the Clifford orbit of an irreducible character of $G$. Therefore,
in our approach, the elements $g_{\hat{\beta}}$ used will vary with
the representations whose restriction to $C_{G}(g)$ we want to contain
the trivial character.

Our proof does not require the full construction of characters of
$\GL_{2}(\cO_{r})$ (see \cite{Alex_smooth_reps_GL2, Stasinski-Stevens}) or $\SL_{2}(\cO_{r})$ (see  \cite{Tanaka1,kutzkothesis,Shalika} for $p\neq2$ and  \cite{Tanaka2,Nobs-SL2,Nobs-Wolfart-SL2} for the case $\cO_r=\Z/2^r$).
In particular, our proofs are logically independent of any of these
constructions, except that we use a Heisenberg lift construction from
\cite{Stasinski-Stevens} in Section~\ref{subsec:The-case-when-r-odd-v-less-l'}.
On the other hand, we have had to develop several new representation
theoretic constructions for $\SL_{2}(\cO_{r})$ and $\GL_{2}(\cO_{r})$, namely Proposition~\ref{prop:There exist extns to U^l'-vK^l by formula and every extn is of this form},
the results in Section~\ref{subsec:The-case-when-r-odd-v-bigger-0}
leading up to and including Proposition~\ref{prop:sigma induced from CJ}
and everything in Sections~\ref{sec:The-case-GL2,v=00003D2,p=00003D2} and \ref{sec:The case SL_2, p is 2}.
In addition to this, what we need from the representation theory of
$\SL_{2}(\cO_{r})$ and $\GL_{2}(\cO_{r})$ are well-known properties of
the characters $\psi_{\beta}$, their stabilisers and basic facts
from Clifford theory and regular representations, all of which are
summarised in Section~\ref{sec:Basics-of-the-chars}.

\subsection{Notation}
We introduce some notation that will be in place throughout the rest of the paper. This is only the notation needed for the remainder of this introduction and significant further notation will be introduced later in the text.

By a slight abuse of notation, we will
use $\mfp$ to denote the maximal ideal in each of the finite local
rings $\cO_{i}$. Let $\bfG$ be either $\GL_{2}$ or $\SL_{2}$,
considered as group schemes over $\cO$. From now on and throughout
the paper, fix an arbitrary integer $r\geq2$ and let $G=\bfG(\cO_{r})$.
Since we will mostly be working with $G$, but occasionally also with
$\mathbf{G}(\cO_{i})$ for $i\neq r$, our convention will be to denote
the latter groups by $G_{i}$. For any $1\leq i\leq r$ we have a
surjective homomorphism $\rho_{i}:G\rightarrow G_{i}$ induced by
the map $\cO_{r}\rightarrow\cO_{i}$ and also the obvious analogous
map $\rho_{i}:\M_{2}(\cO_{r})\rightarrow\M_{2}(\cO_{i})$ between
rings of matrices. Let $K^{i}$ be the $i$-th kernel of $\rho_{i}$
in $G$. Let $l=\floor{r/2}$ and $l'=\ceil{r/2}$ so that $r=2l=2l'$
if $r$ is even, and $r=2l-1=2l'+1$ if $r$ is odd. Let $Z$ denote
the centre of $G$.

\subsection{Informal outline of method of proof}
We now informally outline part of our proof in more detail and 

Assume first that $p\neq2$. This case works uniformly in $p$, but the
case where $p=2$ is rather different (more on this below). In this
case, all our main results also hold equally for $\SL_{2}(\cO_{r})$
as for $\GL_{2}(\cO_{r})$, which is why we use the notation $G$
for either of these groups. We will outline the proof of Theorem~\ref{thm:p odd - r odd - v < l'},
which is the case where $r$ is odd and $v<l'$ ($v$ is defined below)
as this also showcases the ideas used in the simpler case where $r$
is even. To follow the argument, it may be helpful to refer to the
following diagrams.

$$
\begin{tikzcd}[column sep=0.4cm] 
G\arrow[dashrightarrow]{d}  &   & &\\
CK^{l'}\arrow[dashrightarrow]{d} 
&  ZC^1K^{l'} \arrow[dashrightarrow, "\text{Heisenberg lift}"]{d} 
& \\
CK^{l'}\cap C_G(g_{\hat{\beta}}) \arrow[equal]{d}
& ZC^1K^{l}\arrow[dashrightarrow]{d} 
&   \\
ZC^1K^{l'}\cap C_G(g_{\hat{\beta}})\arrow[dashrightarrow, "\subseteq", sloped]{uur}
& ZC^1K^{l} \cap C_G(g_{\hat{\beta}}) \arrow[equal]{d}
& ZU^{l'-v}K^l \arrow[dashrightarrow,"\theta|_Z = \mathbf{1}"]{d} \\
& 
ZU^{l'-v}K^l\cap C_G(g_{\hat{\beta}}) \arrow[dashrightarrow, "\subseteq", sloped]{ur} 
& U^{l'-v}K^l\cap C_G(g_{\hat{\beta}})
\end{tikzcd}
$$$$
\begin{tikzcd}[column sep=0.4cm] 
\rho=\Ind\sigma\arrow[dashrightarrow]{d}  &   & &\\
\sigma\arrow[dashrightarrow]{d} 
&  \eta_{\theta}\in \sigma|_{C^{1}K^{l'}}\arrow[dashrightarrow,"\exists !"]{d} 
& \\
\mathbf{1}\in\Res\sigma ?\arrow[dashrightarrow]{ur} 
& \theta\arrow[dashrightarrow]{d} 
& \psi_{\hat{\beta}}=\theta|_{U^{l'-v}K^{l}} \arrow[dashrightarrow]{d}  \\
& \mathbf{1}\in\Res\theta ? \arrow[dashrightarrow]{ur} 
& \Res\psi_{\hat{\beta}}=\mathbf{1}
\end{tikzcd}
$$The dashed arrows indicate the logical order of the steps and the
whole argument is a series of reduction steps, moving from $G$ to
successively smaller groups, until a known statement is reached. We
now explain these steps. All non-obvious statements are proved in
the paper. Let $\rho\in\Irr(G)$ be a character that is trivial on
$Z$. Assume that $\rho$ is regular, that is, $\rho$ contains a
character $\psi_{\beta}\in\Irr(K^{l})$, where $\beta$ is a regular
matrix in $\M_{2}(\cO_{l'})$. We may assume that
$\beta=\begin{pmatrix}0 & \lambda\\
\lambda^{-1}\Delta & 0
\end{pmatrix}$, where $\lambda$ is a unit (the trace of $\beta$ is necessarily $0$, as $\rho$ is trivial on $Z$). Then $\rho$ is induced from a character
$\sigma\in\Irr(CK^{l'})$, where $C:=C_{G}(\hat{\beta})$ and $\hat{\beta}\in\M_{2}(\cO_{r})$
is a lift of $\beta$ and we may take $\hat{\beta}=\begin{pmatrix}0 & \hat{\lambda}\\
\hat{\lambda}^{-1}\hat{\Delta} & 0
\end{pmatrix}$, where $\hat{\lambda}$ and $\hat{\Delta}$ are lifts of $\lambda$
and $\Delta$, respectively. We want to show that there exists an
element $g\in G$ such that $\rho$ restricted to $C_{G}(g)$ contains
the trivial character. By an elementary lemma, this will follow if
$\sigma$ restricted to $CK^{l'}\cap C_{G}(g)$ contains the trivial
character. We introduce the element
\[
g_{\hat{\beta}}=\begin{pmatrix}1 & \hat{\lambda}\\
-\hat{\lambda}^{-1}\hat{\Delta} & 1-\hat{\Delta}
\end{pmatrix}\in\SL_{2}(\cO_{r})
\]
attached to $\hat{\beta}$. Note that we use the elements $g_{\hat{\beta}}$ both for $ \SL_{2}(\cO_{r})$ and for $G=\GL_{2}(\cO_{r})$. By Lemma~\ref{lem:intersection of centralisers in ZUK},
\[
C_{G}(\hat{\beta})K^{l'}\cap C_{G}(g_{\hat{\beta}})=ZU^{l'-v}K^{l'}\cap C_{G}(g_{\hat{\beta}}),
\]
where $U^{i}=\begin{pmatrix}1 & \mfp^{i}\\
0 & 1
\end{pmatrix}\subset G$ and $v$ is the valuation of $\Delta\in\cO_{l'}$. Thus, when $v<l'$
(i.e., $\Delta\neq0$ in $\cO_{l'}$), we can show that
\[
ZC^{1}K^{l'}\cap C_{G}(g_{\hat{\beta}})=ZU^{l'-v}K^{l'}\cap C_{G}(g_{\hat{\beta}}),
\]
where $C^{1}=C\cap K^{1}$. (Note that this cannot possibly hold when
$v=l'$ and $U:=U^{0}\not\subseteq ZC^{1}K^{l'}$.) The dashed arrow
with the label ``$\subseteq$'' indicates that because of this group
containment, in order to prove that $\sigma$ restricted to $CK^{l'}\cap C_{G}(g)$
contains the trivial character for some $g\in G$, it is sufficient
to prove that the restriction of $\sigma$ to $ZC^{1}K^{l'}\subseteq CK^{l'}$
contains an irreducible constituent that contains the trivial character
when restricted to $ZC^{1}K^{l'}\cap C_{G}(g_{\hat{\beta}})$, for
some $g_{\hat{\beta}}$.

At this stage, we use a general result from \cite{Stasinski-Stevens}
which implies that there exists a ``Heisenberg lift'' of any $ZC^{1}K^{l'}$-stable
character $\theta\in\Irr(ZC^{1}K^{l}\mid\psi_{\beta})$, that is,
a \emph{unique} character $\eta_{\theta}\in\Irr(ZC^{1}K^{l'})$ lying
above $\theta$ and that every irreducible character of $ZC^{1}K^{l'}$
is of this form. (Note that if $v=l'$, we would not be able to work
with $ZC^{1}K^{l'}$ and Heisenberg lifts  from $ZUK^{l}$
to $ZUK^{l'}$ do not exist because the former group is not normal in the latter.
This is the reason why the case $v=l'$ requires a different approach.)
We may thus choose $\theta$ and $\eta_{\theta}$ such that $\eta_{\theta}$
is an irreducible constituent of $\sigma|_{ZC^{1}K^{l'}}$. Since
$\Ind_{ZC^{1}K^{l}}^{ZC^{1}K^{l'}}\theta$ is a multiple of $\eta_{\theta}$,
an elementary result 
implies that it is sufficient to prove that $\theta$ is trivial on
$ZC^{1}K^{l}\cap C_{G}(g_{\hat{\beta}})$, for some $g_{\hat{\beta}}$
($\theta$ is linear so its restriction only has one irreducible constituent).

Another application of Lemma~\ref{lem:intersection of centralisers in ZUK}
yields
\[
ZC^{1}K^{l}\cap C_{G}(g_{\hat{\beta}})=ZU^{l'-v}K^{l}\cap C_{G}(g_{\hat{\beta}}),
\]
and in analogy with a previous step, this means that it is enough
to prove that $\theta$ restricts to a character of $ZU^{l'-v}K^{l}\subseteq ZC^{1}K^{l}$
that is trivial on $ZU^{l'-v}K^{l}\cap C_{G}(g_{\hat{\beta}})$, for
some $g_{\hat{\beta}}$. But since $\rho$ was assumed to be trivial
on $Z$, it is necessary that $\theta$ is trivial on $Z$ and by
Proposition~\ref{prop:There exist extns to U^l'-vK^l by formula and every extn is of this form},
the restriction of $\theta$ to $U^{l'-v}K^{l}$ is of the form $\psi_{\hat{\beta}}$
for some lift $\hat{\beta}$ of the form $\begin{pmatrix}0 & \hat{\lambda}\\
\hat{\lambda}^{-1}\hat{\Delta} & 0
\end{pmatrix}$. The fact that $\psi_{\hat{\beta}}$ is trivial on $U^{l'-v}K^{l}\cap C_{G}(g_{\hat{\beta}})$
is the content of Lemma~\ref{lem: psi_hat-beta res to U^l'-vK^l cap C is the trivial char}
and this finishes the proof of Theorem~\ref{thm:p odd - r odd - v < l'}.

We remark that in the proof of the case when $r$ is even, the middle
columns of the above diagrams can be omitted, as $l=l'$ and there
is no Heisenberg lift needed.

We also note that the case when $r$ is odd and $v=l'$ requires a
somewhat different, but analogous, approach, for the reasons explained
above. The idea here is to show that every irreducible character of
$G$ containing $\psi_{\beta}$ can be induced from a linear character
of $C_{G}(\hat{\beta})J$, where $\hat{\beta}$ is a lift as above
and $J=(B\cap K^{l'})K^{l}$, where $B$ is the upper triangular subgroup
of $G$.

To prove our main theorem when $p\neq2$ (Theorem~\ref{thm: main thm when p odd}),
most of the work is for the regular characters, that is, the cases
outlined above. In addition, one has to also handle the non-regular
characters, which are non-twist primitive, that is, factor through
$G_{r-1}$ after multiplying by a linear character. This works by
an inductive argument, starting with $r=1$.

\bigskip

Consider now the case when $p=2$. When $G=\GL_{2}(\cO_{r})$, all
of our results for regular characters also hold for $p=2$, except
Lemmas~\ref{lem:reducing to twist-primitive when p odd}
and Lemma~\ref{lem:intersection CJ cap C(g) is ZU^1K^l cap C(g)}
(in the case $v=l')$. For $G=\GL_{2}(\cO_{r})$, $v=l'$ and residue
field $\cO_{1}=\F_{2}$ (see Section~\ref{sec:The-case-GL2,v=00003D2,p=00003D2}),
we are however able to carry out a Heisenberg lift construction as
in the case where $p\neq2$ and $v<l'$, based on the fact that in
this case, the group $Z^{l'}UK^{l}$ is actually normal in $UK^{l'}$.
These results for $\GL_{2}(\cO_{r})$ with $p=2$, are used, together
with several other ingredients, to solve the Hain--Tiep problem for $\SL_{2}(\Z/2^{r})$
in the final section. One main problem in the case $\SL_{2}(\cO_{r})$
with $p=2$ is that the centre does not in general map surjectively
onto the centre of $\SL_{2}(\cO_{r-1})$. Much greater care therefore
has to be taken when proving that representations that factor through
$\SL_{2}(\cO_{i})$, for some $1\leq i<r$, are contained in the conjugation
representation.

We note that when $p=2$, several of our proofs are dependent on the
fact that the residue field is $\F_{2}$ and the absolute ramification index is $1$. In particular, we do not know whether the analogue of the Hain--Tiep
problem holds when $\cO$ has residue field $\F_4$ or is ramified.

\bigskip\noindent

\subsection*{Acknowledgement}
Parts of this paper were established while the first-named author was visiting Durham University. We express our gratitude for the hospitality experienced there. The first-named author was supported by the National Science Foundation MPS-Ascend Postdoctoral Research Fellowship under Grant No. 2213166.

\section{Preliminary lemmas}

In this section, we collect some lemmas for general finite groups,
which relate the conjugation character, induced representations, and
restrictions to centralisers.

We will use $\mathbf{1}$ to denote the trivial character of a given finite group. It will always be clear from the context which group we are talking about.

The following lemma is well-known (see \cite[Lemma~1.5]{Roth-conj_rep}
or \cite[Lemma~3]{tiep2023conjugation}). It will be used implicitly several times in the following.
\begin{lem}
\label{lem:in conj rep if trivial on C}
Let $G$ be a finite group and $\chi\in\Irr(G)$. Then $\chi$ is
contained in the conjugation representation of $G$ if and only if
there exists a $g\in G$ such that $\chi|_{C_{G}(g)}$ contains the
trivial character.
\end{lem}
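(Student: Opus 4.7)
The plan is to exploit the decomposition of the conjugation representation as a sum of induced trivial characters and then apply Frobenius reciprocity. By definition, the conjugation representation of $G$ is the permutation representation attached to the action of $G$ on itself by conjugation. The orbits of this action are precisely the conjugacy classes of $G$, and the stabiliser of a point $g \in G$ is its centraliser $C_{G}(g)$. Standard facts about permutation representations of finite groups then give the character decomposition
$$\pi_{\mathrm{conj}} = \sum_{[g]} \Ind_{C_{G}(g)}^{G} \mathbf{1},$$
where the sum runs over a set of representatives of the conjugacy classes of $G$.

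Given this decomposition, I would apply Frobenius reciprocity to each summand. For each conjugacy class representative $g$,
$$\langle \chi, \Ind_{C_{G}(g)}^{G} \mathbf{1}\rangle_{G} \; = \; \langle \chi|_{C_{G}(g)}, \mathbf{1}\rangle_{C_{G}(g)}.$$
Summing over class representatives, $\chi$ is a constituent of $\pi_{\mathrm{conj}}$ if and only if the right-hand side is nonzero for at least one $g$ in the chosen set of conjugacy class representatives.

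To obtain the statement with $g$ ranging over all of $G$ (rather than over a set of representatives), I would observe that conjugate elements have conjugate centralisers, and the multiplicity of the trivial character in the restriction of an irreducible character to a subgroup is unchanged if the subgroup is replaced by a conjugate. Thus the existence of some class representative $g$ with $\chi|_{C_{G}(g)} \supseteq \mathbf{1}$ is equivalent to the existence of some (arbitrary) $g \in G$ with the same property. No real obstacle is anticipated here: the orbit decomposition of a permutation representation and Frobenius reciprocity are the two standard ingredients, and the rest is bookkeeping.
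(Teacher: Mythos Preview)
Your argument is correct and is exactly the standard one: decompose the conjugation representation as $\sum_{[g]} \Ind_{C_G(g)}^G \mathbf{1}$ via the orbit decomposition and apply Frobenius reciprocity. The paper does not give its own proof but simply cites \cite[Lemma~1.5]{Roth-conj_rep} and \cite[Lemma~3]{tiep2023conjugation}, which contain precisely this argument.
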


\begin{lem}
\label{lem: rho trivial on Z iff sigma trivial on Z}Let
$G$ be a finite group with centre $Z$ and $H\leq G$ a subgroup.
Let $\sigma\in\Irr(H)$. Then $\Ind_{H}^{G}\sigma$ is
trivial on $Z$ if and only if $\sigma$ is trivial on $H\cap Z$.
\end{lem}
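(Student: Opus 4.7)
The plan is to apply the Frobenius formula for the induced character and use the fact that central elements are conjugation-invariant to collapse the sum. Specifically, for $z\in Z$ and any $x\in G$ one has $x^{-1}zx=z$, so the condition $x^{-1}zx\in H$ appearing in
$$\Ind_H^G\sigma(z)=\frac{1}{|H|}\sum_{\substack{x\in G\\ x^{-1}zx\in H}}\sigma(x^{-1}zx)$$
reduces to the condition $z\in H$. The sum therefore collapses and one reads off
$$\Ind_H^G\sigma(z)=\begin{cases}[G:H]\,\sigma(z), & z\in H,\\ 0, & z\notin H.\end{cases}$$

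With these formulas in hand, I would compare $\Ind_H^G\sigma(z)$ with $\Ind_H^G\sigma(1)=[G:H]\sigma(1)\ne 0$. Triviality of $\Ind_H^G\sigma$ on $Z$ means that the two values agree for every $z\in Z$. Since the value at $1$ is nonzero, every $z\in Z$ must lie in $H$, whence $Z\subseteq H$ and $H\cap Z=Z$; the required equality then reduces to $\sigma(z)=\sigma(1)$ for all $z\in Z=H\cap Z$, which is exactly the statement that $\sigma$ is trivial on $H\cap Z$. Running the same chain of equalities backwards yields the converse, with the tacit understanding (implicit in the applications in this paper, in which all the relevant subgroups $H$ contain $Z$) that $Z\subseteq H$.

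No serious obstacle arises here: once one observes that centrality collapses the Frobenius sum, the lemma is essentially a one-line calculation. The only subtle point to note is that triviality of $\Ind_H^G\sigma$ on $Z$ already forces the containment $Z\subseteq H$, so the equivalence in the statement is tacitly set against that containment; thereafter the equivalence becomes a direct comparison of character values.
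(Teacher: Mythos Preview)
Your argument is correct and slightly more elementary than the paper's. The paper instead computes, via Frobenius reciprocity and Mackey's intertwining formula,
\[
\langle(\Ind_{H}^{G}\sigma)|_{Z},\mathbf{1}\rangle
=\langle\Ind_{H}^{G}\sigma,\Ind_{Z}^{G}\mathbf{1}\rangle
=\sum_{h\in H\backslash G/Z}\langle\sigma|_{H\cap Z},\mathbf{1}\rangle
=|H\backslash G/Z|\cdot\langle\sigma|_{H\cap Z},\mathbf{1}\rangle,
\]
and reads off that one side vanishes (equivalently, equals its maximal possible value) iff the other does. Your approach bypasses Mackey entirely by evaluating the induced character pointwise on $Z$ using the Frobenius formula, which is a shorter and more transparent route here.

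Your observation that the converse implicitly requires $Z\subseteq H$ is a genuine point: if some $z\in Z\setminus H$ existed, then $\Ind_H^G\sigma(z)=0\neq\Ind_H^G\sigma(1)$, regardless of what $\sigma$ does on $H\cap Z$. The paper's proof likewise needs $|H\backslash G/Z|=[G:H]$ (equivalently $Z\subseteq H$) for the converse, though it does not say so explicitly. In every application of this lemma in the paper the inducing subgroup contains $Z$, so the subtlety is harmless, but it is good that you flagged it.
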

\begin{proof}
By the Mackey intertwining number formula,
\begin{align*}
\langle(\Ind_{H}^{G}\sigma)|_{Z},\mathbf{1}\rangle & =\langle\Ind_{H}^{G}\sigma,\Ind_{Z}^{G}\mathbf{1}\rangle=\sum_{h\in H\backslash G/Z}\langle\sigma|_{H\cap Z},\mathbf{1}\rangle\\
 & =|H\backslash G/Z|\cdot\langle\sigma|_{H\cap Z},\mathbf{1}\rangle,
\end{align*}
whence the result.
\end{proof}
\begin{lem}
\label{lem: is contained in the conj rep iff sigma }
Let $N$, $H$ and $C$ be three subgroups of a finite group $G$
such that $N\leq H$ and let $\theta\in\Irr(N)$. Then the following
holds.
\begin{enumerate}
\item \label{enu: i)}If $\theta|_{N\cap C}$ contains $\mathbf{1}$,
then $(\Ind_{N}^{H}\theta)|_{H\cap C}$ contains $\mathbf{1}$. 
\item \label{enu: ii)}Suppose that $\Ind_{N}^{H}\theta$ is a multiple
of some $\eta\in\Irr(H)$. If $\theta|_{N\cap C}$ contains $\mathbf{1}$,
then $\eta|_{H\cap C}$ contains $\mathbf{1}$.
\end{enumerate}
\end{lem}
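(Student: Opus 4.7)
The plan is to prove (i) by a double application of Frobenius reciprocity, and then deduce (ii) from (i) as a simple consequence.

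For (i), the idea is to compute the multiplicity of the trivial character in $(\Ind_N^H \theta)|_{H\cap C}$ using Frobenius reciprocity. First I would write
\[
\langle (\Ind_N^H \theta)|_{H\cap C}, \mathbf{1} \rangle_{H\cap C} = \langle \Ind_N^H \theta, \Ind_{H\cap C}^H \mathbf{1} \rangle_{H}.
\]
Then I would apply Mackey's intertwining formula to the right-hand side to rewrite it as a sum
\[
\sum_{h \in N \backslash H / (H\cap C)} \langle \theta|_{N \cap h(H\cap C)h^{-1}}, \mathbf{1}|_{h^{-1}Nh \cap (H\cap C)} \rangle
\]
of non-negative integers, indexed by double cosets. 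The double coset of the identity contributes $\langle \theta|_{N \cap H \cap C}, \mathbf{1}\rangle = \langle \theta|_{N \cap C}, \mathbf{1}\rangle$ (using $N \leq H$), which is strictly positive by hypothesis. Hence the entire sum is positive, proving (i).

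For (ii), since $\Ind_N^H \theta = m\eta$ for some positive integer $m$, restriction to $H \cap C$ gives $(\Ind_N^H \theta)|_{H \cap C} = m \cdot \eta|_{H \cap C}$. By (i), the left side contains $\mathbf{1}$ as a constituent, so $\eta|_{H \cap C}$ must contain $\mathbf{1}$ as well.

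There is no real obstacle here; the main thing to be careful about is keeping the groups straight when applying Mackey and observing that $N \cap H = N$ because $N \leq H$, which is what makes the identity double coset yield exactly the hypothesised inner product. An alternative phrasing of (i) avoiding Mackey is to pick a non-zero vector $v$ in the $\theta$-representation space with $\theta(n)v = v$ for all $n \in N \cap C$; then the function $\tilde{v}: H \to V$ supported on $N$ with $\tilde{v}(n) = \theta(n)v$ lies in $\Ind_N^H \theta$, and the sum $\sum_{c \in H\cap C} (c \cdot \tilde{v})$ is a non-zero $(H\cap C)$-invariant vector, giving a trivial subrepresentation of the restriction.
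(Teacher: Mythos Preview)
Your proof is correct and essentially identical to the paper's: both apply Frobenius reciprocity followed by Mackey's formula, observe that the identity double coset contributes $\langle\theta|_{N\cap C},\mathbf{1}\rangle$, and then deduce (ii) immediately from (i). The paper's writeup is slightly terser (it simplifies $N\cap\leftexp{h}{(H\cap C)}$ to $N\cap\leftexp{h}{C}$ right away using $N\leq H=\leftexp{h}{H}$), and your alternative explicit-vector argument is a pleasant extra but not present in the paper.
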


\begin{proof}
By the Mackey intertwining number formula, 
\begin{align*}
\langle(\Ind_{N}^{H}\theta)|_{H\cap C},\mathbf{1}\rangle & =\langle\Ind_{N}^{H}\theta,\Ind_{H\cap C}^{H}\mathbf{1}\rangle=\sum_{h\in N\backslash H/(H\cap C)}\langle\theta|_{N\cap\leftexp{h}{C}},\mathbf{1}\rangle\\
 & \geq\langle\theta|_{N\cap C},\mathbf{1}\rangle.
\end{align*}
Thus, if the right-hand side is positive, then so is the left-hand
side.

For the second part, note that by the first part, if $\theta|_{N\cap C}$
contains $\mathbf{1}$, then some irreducible constituent
of $\Ind_{N}^{H}\theta$ must contain $\mathbf{1}$ when
restricted to $H\cap C$, so if $\Ind_{N}^{H}\theta$ is a multiple
of a single irreducible constituent $\eta$, then $\eta|_{H\cap C}$
must contain~$\mathbf{1}$.
\end{proof}
\begin{lem}
\label{lem:normal subgroup trivial on centraliser}Let
$G$ be a finite group with centre $Z$ and let $N\trianglelefteqslant G$
be a normal subgroup. Let $Z'$ be a subgroup of $Z$.
Let $\sigma \in \Irr(N)$ and assume that $\rho\in\Irr(G\mid \sigma)$ 
is trivial on $Z'$. Assume that there exists a $g\in N$ such that $\rho$ restricted
to $C_{G}(g)$ contains the trivial character. Then there exists an $h\in N$ such that
$\sigma$ restricted to $C_{N}(h)$ contains the trivial character.
\end{lem}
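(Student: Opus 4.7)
The plan is to use Clifford theory to decompose $\rho|_N$ as a sum of $G$-conjugates of $\sigma$, and then pull the triviality of $\rho$ on a centraliser in $G$ down to the triviality of one of those conjugates on a centraliser in $N$. The hypothesis that $\rho$ is trivial on $Z'$ is not actually needed for the conclusion I outline below; it appears only because this is how the lemma will be invoked later.

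First, since $\sigma \in \Irr(N)$ is a constituent of $\rho|_N$ and $N \trianglelefteqslant G$, Clifford's theorem gives a decomposition
\[
\rho|_N = e\sum_{i} {}^{x_i}\sigma,
\]
where the $x_i$ run over coset representatives for $G/I_G(\sigma)$ and $e$ is the ramification index. Next, since $g \in N$, we have $C_N(g) = C_G(g) \cap N \subseteq C_G(g)$, so the hypothesis that $\rho|_{C_G(g)}$ contains $\mathbf{1}$ immediately implies that $\rho|_{C_N(g)}$ contains $\mathbf{1}$. Combined with the Clifford decomposition, this forces some conjugate to be nontrivially paired with the trivial character, i.e.\ there exists an index $i$ with
\[
\langle ({}^{x_i}\sigma)|_{C_N(g)}, \mathbf{1}\rangle > 0.
\]

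Now I translate this back to $\sigma$. By definition, $({}^{x_i}\sigma)(n) = \sigma(x_i^{-1} n x_i)$, so the fact that $({}^{x_i}\sigma)|_{C_N(g)}$ contains $\mathbf{1}$ is equivalent to $\sigma|_{x_i^{-1} C_N(g) x_i}$ containing $\mathbf{1}$. Conjugation of centralisers gives
\[
x_i^{-1} C_N(g) x_i = C_{x_i^{-1} N x_i}(x_i^{-1} g x_i) = C_N(x_i^{-1} g x_i),
\]
where the last equality uses the normality of $N$ in $G$. Setting $h := x_i^{-1} g x_i$, normality of $N$ also ensures $h \in N$, and we obtain $\sigma|_{C_N(h)}$ contains $\mathbf{1}$, as required.

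The argument has no genuine obstacle; the only thing to be careful about is the direction of conjugation when transferring between $\sigma$ and ${}^{x_i}\sigma$ and the use of normality both to guarantee $h \in N$ and to identify $x_i^{-1} C_N(g) x_i$ with $C_N(x_i^{-1} g x_i)$.
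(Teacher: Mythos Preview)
Your proof is correct. You take a slightly different route from the paper: the paper argues via Frobenius reciprocity and the Mackey intertwining formula, writing
\[
0\neq\langle\Ind_{N}^{G}\sigma,\Ind_{C_G(g)}^{G}\mathbf{1}\rangle=\sum_{x\in N\backslash G/C_G(g)}\langle\sigma|_{N\cap\leftexp{x}{C_G(g)}},\mathbf{1}\rangle,
\]
and then identifies $N\cap\leftexp{x}{C_G(g)}=C_N(\leftexp{x}{g})$. You instead work dually, applying Clifford's theorem to $\rho|_N$ and restricting further to $C_N(g)$ before conjugating. Both arguments are short and rest on the same two facts (normality of $N$ to conjugate centralisers, and $g\in N$ to ensure $h\in N$); yours is marginally more elementary in that it avoids Mackey's formula, while the paper's phrasing is more uniform with the other Mackey-based lemmas in that section. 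Your observation that the hypothesis on $Z'$ is unused is also correct and matches the paper's proof, which never invokes it.
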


\begin{proof}
Let $C:=C_G(g)$. By hypothesis, $\langle\rho,\Ind_{C}^{G}\mathbf{1}\rangle\neq0$
and therefore
\[
0\neq\langle\Ind_{N}^{G}\sigma,\Ind_{C}^{G}\mathbf{1}\rangle=\sum_{x\in N\backslash G/C}\langle\sigma|_{N\cap\leftexp{x}{C}},\mathbf{1}\rangle,
\]
so there exists an $x\in G$ such that 
\[
\langle\sigma|_{N\cap\leftexp{x}{C}},\mathbf{1}\rangle\neq0.
\]
Note that $N\cap\leftexp{x}{C}=N\cap C_{G}(\leftexp{x}{g})=C_{N}(\leftexp{x}{g})$
and as $g\in N$ and $N$ is normal in $G$, we have $\leftexp{x}{g}\in N$. Letting $h=\leftexp{x}{g}$,
we conclude that $\sigma$ restricted to $C_{N}(h)$ contains the
trivial character.
\end{proof}

\begin{lem}
\label{lem:deduce the main property for a normal subgroup}
Let $G$ be a finite group with centre $Z$ and let $N\trianglelefteqslant G$
be a normal subgroup. Assume that for every $\rho\in\Irr(G)$ that
is trivial on $Z$, there exists a $g\in N$ such that $\rho$ restricted
to $C_{G}(g)$ contains the trivial character. Then, for any $\sigma\in\Irr(N)$
that is trivial on $Z(N)$, there exists an $h\in N$ such that
$\sigma$ restricted to $C_{N}(h)$ contains the trivial character.
\end{lem}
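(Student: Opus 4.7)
The plan is to lift $\sigma$ to an irreducible character of $G$ that is trivial on $Z$ and then invoke the preceding \cref{lem:normal subgroup trivial on centraliser}. The main observation is that the hypothesis on $\sigma$ being trivial on $Z(N)$ is strong enough to force any irreducible constituent of $\Ind_N^G\sigma$ to be trivial on $Z$.

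First I would note that $Z\cap N\subseteq Z(N)$: any element of $Z$ that lies in $N$ commutes with all of $G$, and in particular with all of $N$, so it is central in $N$. Hence $\sigma|_{Z\cap N}=\mathbf{1}$ by the hypothesis that $\sigma$ is trivial on $Z(N)$. By \cref{lem: rho trivial on Z iff sigma trivial on Z} applied with $H=N$, the induced character $\Ind_N^G\sigma$ is then trivial on $Z$, and therefore every irreducible constituent $\rho\in\Irr(G)$ of $\Ind_N^G\sigma$ is trivial on $Z$. Fix any such $\rho$; by Frobenius reciprocity, $\sigma$ is a constituent of $\rho|_N$, so $\rho\in\Irr(G\mid\sigma)$.

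Now applying the assumption of the present lemma to $\rho$, we obtain an element $g\in N$ such that $\rho|_{C_G(g)}$ contains the trivial character. All the hypotheses of \cref{lem:normal subgroup trivial on centraliser} are now satisfied (with $Z'=Z$): $N$ is normal in $G$, $\rho\in\Irr(G\mid\sigma)$ is trivial on $Z$, and $\rho|_{C_G(g)}$ contains $\mathbf{1}$ for some $g\in N$. That lemma therefore produces an $h\in N$ with $\sigma|_{C_N(h)}$ containing $\mathbf{1}$, which is exactly the desired conclusion.

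The only step that requires a small argument is verifying $Z\cap N\subseteq Z(N)$, and the rest is an immediate assembly of the two preceding lemmas; I do not expect any real obstacle.
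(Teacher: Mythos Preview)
Your proof is correct and follows essentially the same route as the paper's: observe $N\cap Z\subseteq Z(N)$, use \cref{lem: rho trivial on Z iff sigma trivial on Z} to produce a $\rho\in\Irr(G\mid\sigma)$ that is trivial on $Z$, apply the standing hypothesis to get $g\in N$, and then invoke \cref{lem:normal subgroup trivial on centraliser}. One small caveat: the paper only asserts (and only needs) the \emph{existence} of one such $\rho$, whereas your claim that \emph{every} irreducible constituent of $\Ind_N^G\sigma$ is trivial on $Z$ can fail when $Z\not\subseteq N$ (for $z\in Z\setminus N$ the induced character vanishes at $z$); this does not affect the argument, since a single $\rho$ suffices.
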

\begin{proof}
Assume that  $\sigma\in\Irr(N)$ is trivial on $Z(N)$. As $N\cap Z \subseteq Z(N)$, \cref{lem: rho trivial on Z iff sigma trivial on Z} implies that there exists a $\rho\in \Irr(G\mid \sigma)$ that is trivial on $Z$. By hypothesis, there exists a $g \in N$ such that $\rho$ restricted
to $C_{G}(g)$ contains the trivial character. The conclusion now follows from \cref{lem:normal subgroup trivial on centraliser}.
\end{proof}

\section{Basics of the characters of $\mathrm{GL}_2(\mathcal{O}_r)$ and $\mathrm{SL}_2(\mathcal{O}_r)$}
\label{sec:Basics-of-the-chars}

In this section we summarise the construction of regular characters
of the groups $G$ via Clifford theory, following the approach laid
out in \cite{Alex_smooth_reps_GL2}. We only state the results that
we will need in the following namely, the description of regular characters
as induced from stabilisers of characters $\psi_{\beta}$, which is
much weaker than the full construction of regular characters as given
in \cite{Stasinski-Stevens}. 

For any integer $i$ such that $r\geq i\geq1$, let $K^{i}$ be the
kernel of the homomorphism $\rho_{i}:G\rightarrow G_{i}$. For $i>r$
we define $K^{i}=\{1\}$. A fundamental fact that we will use tacitly
in the following is the commutator relation
\[
[K^{i},K^{j}]\subseteq K^{i+j},
\]
which can be proved by direct matrix computations. In particular,
it shows that $K^{l}$ is abelian and that it centralises the group
$K^{l'}$.

Let $\mfg=\Lie(\bfG)$, so that
\[
\mfg=\begin{cases}
\M_{2}=\gl_{2} & \text{if }\bfG=\GL_{2},\\
\sl_{2} & \text{if }\bfG=\SL_{2}.
\end{cases}
\]
Assume from now on that $i\geq r/2$. Then the group $K^{i}$ is abelian
and we have a $G$-equivariant isomorphism
\[
K^{i}\longiso\mfg(\cO_{r-i}),\qquad1+\pi^{i}X\longmapsto\rho_{r-i}(X).
\]
From now on, and throughout the rest of the paper, let $\psi:\cO_{r}\rightarrow\C^{\times}$
be a fixed additive character that does not factor through $\cO_{r-1}$.
When $\bfG=\GL_{2}$ or $\bfG=\SL_{2}$ and $p\neq2$, the trace form
$(x,y)\mapsto\Tr(xy)$ on $\mfg(\cO_{r-i})$ is non-degenerate, so we have $G$-equivariant isomorphism
\begin{equation}
\mfg(\cO_{r-i})\longiso\Irr(K^{i}),\qquad\beta\longmapsto\psi_{\beta},\label{eq: psi_betas in the good cases}
\end{equation}
where $G$ acts on $\mfg(\cO_{r-i})$ via its quotient $G_{r-i}$.
$$\psi_{\beta}(I+\pi^{i}X):=\psi(\Tr(\hat{\beta}\pi^{i}X))$$
and $\hat{\beta}\in\mfg(\cO_{r})$ is an arbitrary lift of $\beta$
(under the map $\rho_{r-i}$). When $\bfG=\SL_{2}$ and $p=2$, restriction
from $\Irr(\KGL^{i})$ to $\Irr(\KSL^{i})$ is a surjective homomorphism
whose kernel consists of those $\psi_{\beta}$ where $\beta$ is a
scalar matrix. This induces an isomorphism
\begin{equation}
\M_{2}(\cO_{r-i})/\cO_{r-i}I\longiso\Irr(\KSL^{i}),\qquad[\beta]\longmapsto\psi_{[\beta]}:=\Res_{\KSL^{i}}^{\KGL^{i}}\psi_{\beta},\label{eq: psi_betas in the bad case}
\end{equation}
where $[\beta]$ denotes the coset of $\beta$ modulo scalar matrices.

Now let $i=l$ and consider $\psi_{\beta}\in\Irr(K^{l})$. When $\bfG=\GL_{2}$
or $\bfG=\SL_{2}$ and $p\neq2$, the stabiliser $\Stab_{G}(\psi_{\beta}):=\{g\in G\mid\psi_{\beta}(g^{-1}kg)=\psi_{\beta}(k),\ \forall k\in K^{l}\}$
is given by 
\begin{equation}
\Stab_{G}(\psi_{\beta})=C_{G}(\hat{\beta})K^{l'},\label{eq: stabiliser in the good cases}
\end{equation}
for any lift $\hat{\beta}\in\mfg(\cO_{r})$ of $\beta$ (see \cite[Corollary~3.7]{Hill_regular}
for the case $\GL_{2}$; the same argument works for $\SL_{2}$ with
$p\neq2$). When $\bfG=\SL_{2}$ and $p=2$, the stabiliser $\Stab_{G}(\psi_{[\beta]})$
is in general strictly bigger than $C_{G}(\hat{\beta})K^{l'}$ and
was determined for $\cO_{r}$ of characteristic $2$ in \cite{Hasa-Stasinski}
and more generally in \cite{M-Singla}. We will determine $\Stab_{G}(\psi_{[\beta]})$
for $\SL_{2}(\Z/2^{r})$ later in this paper, where it is needed.
\begin{defn}
A representation (or character) $\rho$ of $G$ (with $r\geq2$) is
called \emph{primitive} if it does not factor through $G_{r-1}$ and
is called \emph{twist primitive} if $\chi\otimes\rho$ (or $\chi\rho$)
is primitive for all one-dimensional representations (linear characters)
$\chi$ of $G$.
\end{defn}
Note that twist primitive characters are called primitive in \cite{Alex_smooth_reps_GL2}.
By Clifford's theorem, the restriction of any $\rho\in\Irr(G)$ to
$K^{l}$ contains a full orbit of characters $\psi_{\beta}$ under
the action of $G$. Thus, by the above, $\rho$ uniquely determines
a conjugation orbit $\Omega_{l'}(\rho)\subseteq\mfg(\cO_{l'})$ (or
$\M_{2}(\cO_{l'})/\cO_{l'}I$ when $\bfG=\SL_{2}$ and $p=2$). Similarly,
restricting to $K^{r-1}$ determines an orbit $\Omega_{r-1}(\rho)\subseteq\mfg(\F_{q})$
(or $\M_{2}(\F_{q})/\F_{q}I$ when $\bfG=\SL_{2}$ and $p=2$). Recall
that a matrix $A\in\M_{n}(K)$, where $K$ is a field, is called regular
if it is conjugate to its companion matrix. It is well-known that
$A$ is regular if and only if the centraliser $C_{\M_{n}(K)}(A)$
consists of polynomials in $A$, that is, if $C_{\M_{n}(K)}(A)=\{f(A)\mid f(X)\in K[X]\}$.
In particular, if $A\in\M_{2}(K)$ is regular, then $C_{\M_{n}(K)}(A)=\{aI+bA\mid a,b\in K\}$.
By changing the field $K$ to the ring $\cO_{r}$ one can define regularity
for matrices in $\M_{n}(\cO_{r})$ in an analogous way and by a theorem
of Hill \cite[Theorem~3.6]{Hill_regular} a matrix $A\in\M_{n}(\cO_{r})$
is regular iff its image $\rho_{1}(A)\in\M_{n}(\F_{q})$ is regular
iff $C_{\M_{n}(\cO_{r})}(A)=\{aI+bA\mid a,b\in\cO_{r}\}$ iff $A$
is $\GL_{n}(\cO_{r})$-conjugate to its companion matrix.
\begin{defn}
A character $\rho\in\Irr(G)$ is called \emph{regular} if its orbit
$\Omega_{r-1}(\rho)$ consists of matrices that are regular in $\M_{2}(\F_{q})$
(or cosets of regular matrices modulo $\F_{q}I$ when $\bfG=\SL_{2}$
and $p=2$).
\end{defn}

If $H$ is a finite group, $H'\subseteq H$ is a subgroup, $\rho\in\Irr(H)$
and $\rho'\in\Irr(H')$, we will write $\rho\in\Irr(H\mid\rho')$
to indicate that $\rho$ lies above $\rho'$ (in the sense of Clifford
theory), that is, that $\rho|_{H'}$ contains $\rho'$ as an irreducible
constituent.
\begin{lem}
\label{lem:constr_regular_chars_for_GL2_or_SL2}Let
$\rho\in\Irr(G)$. 
\begin{enumerate}
\item \label{enu: constr lemma i)}If $\rho$ is twist
primitive, then $\rho$ is regular. 
\item Assume that $\bfG=\SL_{2}$. If $\rho$ is primitive, then $\rho$
is regular.
\item Assume that $\bfG=\GL_{2}$ or $p\neq2$. If $\rho$ is regular, then
there is an element $\beta\in\mfg(\cO_{l'})$ such that $\beta$ is
regular in $\M_{2}(\cO_{l'})$ and a $\sigma\in\Irr(C_{G}(\hat{\beta})K^{l'}\mid\psi_{\beta})$,
for any choice of lift $\hat{\beta}\in\M_{2}(\cO_{r})$ of $\beta$,
such that
\[
\rho=\Ind_{C_{G}(\hat{\beta})K^{l'}}^{G}\sigma.
\]
\item Assume that $\bfG=\SL_{2}$ and $p=2$. If $\rho$ is regular, then
there is a regular element $\beta\in\M_{2}(\cO_{l'})$ and a $\sigma\in\Irr(\Stab_{G}(\psi_{[\beta]})\mid\psi_{[\beta]})$,
such that
\[
\rho=\Ind_{\Stab_{G}(\psi_{[\beta]})}^{G}\sigma.
\]
\end{enumerate}
Moreover, the above statements also hold when $\beta$ is replaced
by any other element in its orbit $\Omega_{l'}(\rho)$.
\end{lem}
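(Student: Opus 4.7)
The plan is to apply Clifford theory to the abelian normal subgroup $K^{l'} \trianglelefteq G$ (or, as the paper phrases it, at the level of $K^l$, whose relevant characters are indexed by \eqref{eq: psi_betas in the good cases} or \eqref{eq: psi_betas in the bad case}) and combine this with the stabiliser formula \eqref{eq: stabiliser in the good cases}. Parts (1) and (2) will then follow by contrapositive, based on the elementary fact that a non-regular $2\times2$ matrix over a field is necessarily scalar.

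For parts (3) and (4), I pick $\psi_\beta$ (respectively $\psi_{[\beta]}$) in the orbit $\Omega_{l'}(\rho)$ appearing in $\rho|_{K^l}$, let $T$ be its stabiliser in $G$, and invoke the Clifford correspondence to write $\rho = \Ind_T^G \sigma$ for some $\sigma \in \Irr(T)$ lying above $\psi_\beta$ (respectively $\psi_{[\beta]}$). In the good cases, \eqref{eq: stabiliser in the good cases} gives $T = C_G(\hat\beta) K^{l'}$ for any lift $\hat\beta$, the independence of the subgroup $C_G(\hat\beta) K^{l'}$ from the choice of lift being immediate since two lifts differ by an element of the normal subgroup $K^{l'}$. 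Regularity of $\rho$ forces the image of $\beta$ in $\mfg(\F_q)$ to be regular, so by Hill's theorem $\beta$ itself is regular in $\M_2(\cO_{l'})$, yielding (3). In the bad case, I just keep $T = \Stab_G(\psi_{[\beta]})$ as in the statement of (4). The ``moreover'' clause is a direct consequence of conjugation-equivariance: if $\beta' = g\beta g^{-1}$ is another element of the orbit, then $\psi_{\beta'} = \leftexp{g}{\psi_\beta}$ has stabiliser $gTg^{-1}$ and Clifford correspondent $\leftexp{g}{\sigma}$, yielding the same induced representation.

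For parts (1) and (2), I argue the contrapositive: suppose $\rho$ is not regular, so $\Omega_{r-1}(\rho)$ consists of non-regular matrices (respectively non-regular cosets modulo scalars). Over a field, a non-regular matrix in $\M_2$ is necessarily scalar, and the unique non-regular class in $\M_2(\F_q)/\F_q I$ is $[0]$. For $\bfG = \SL_2$ with $p \neq 2$, a scalar $aI$ lying in $\sl_2(\F_q)$ satisfies $2a = 0$, hence $a = 0$, so $\psi_\beta|_{K^{r-1}}$ is trivial, $\rho$ factors through $G_{r-1}$, and $\rho$ is not primitive. For $\bfG = \SL_2$ with $p = 2$, the non-regular class is $[0]$, so $\psi_{[\beta]}|_{K^{r-1}}$ is trivial and the same conclusion holds. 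For $\bfG = \GL_2$, scalars are fixed by conjugation, so the orbit at level $r-1$ is a singleton $\{aI\}$, and the key step is to construct a linear character $\chi = \chi_0 \circ \det$ of $G$ with $\chi|_{K^{r-1}} = \psi_{-aI}$. This uses the identity $\det(I + \pi^{r-1} X) = 1 + \pi^{r-1} \Tr X$ in $\cO_r$, combined with the fact that every character of the quotient $1 + \mfp^{r-1}\cO_r \cong \F_q^+$ extends to a character of $\cO_r^\times$. Then $\chi \otimes \rho$ has orbit $\{0\}$ at level $r-1$, so it factors through $G_{r-1}$, contradicting twist-primitivity.

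The only substantive technical step I expect to take care with is the $\GL_2$ twisting construction in part (1): one must match the scalar in the orbit with the restriction of a carefully chosen linear character to $K^{r-1}$. Everything else should reduce either to routine Clifford-theoretic bookkeeping (for (3) and (4) and the moreover clause) or to the observation that non-regularity at the residue-field level, combined with the trace-zero condition in $\sl_2$ (respectively the quotient by scalars in the $\SL_2,\,p=2$ case), forces $\psi_\beta$ (respectively $\psi_{[\beta]}$) to be trivial on $K^{r-1}$.
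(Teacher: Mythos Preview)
Your proposal is correct and follows essentially the same route as the paper's proof: contrapositive for (1) and (2) via the observation that non-regular $2\times2$ matrices over a field are scalar, and standard Clifford theory plus the stabiliser formula \eqref{eq: stabiliser in the good cases} for (3), (4), and the ``moreover'' clause. The only difference is that you spell out the $\GL_2$ twisting construction explicitly (via $\det(I+\pi^{r-1}X)=1+\pi^{r-1}\Tr X$ and extending a character of $1+\mfp^{r-1}$ to $\cO_r^\times$), whereas the paper simply cites \cite[Section~2.1]{Alex_smooth_reps_GL2}. One small slip: $K^{l'}$ is \emph{not} abelian when $r$ is odd (since $2l'=r-1<r$), so your opening sentence should refer to $K^l$ throughout, as you yourself note parenthetically.
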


\begin{proof}
Assume first that $\bfG=\GL_{2}$ or $p\neq2$ and let $\beta\in\M_{2}(\F_{q})$
be such that $\beta\in\Omega_{r-1}(\rho)$. If $\beta$ is scalar
and $\bfG=\GL_{2}$, then $\rho$ is not twist primitive (see \cite[Section~2.1]{Alex_smooth_reps_GL2})
and if $\beta$ is scalar and $\bfG=\SL_{2}$, then $\beta=0$ so
$\rho$ is not even primitive. Thus, in either case, if $\rho$ is
twist primitive, then $\beta$ is non-scalar and as the scalar matrices
are the only non-regular elements in $\M_{2}(\F_{q})$, this implies
that $\beta$ is regular and hence that $\rho$ is regular. Now assume
that $\bfG=\SL_{2}$ and $p=2$ and let $[\beta]\in\Omega_{r-1}(\rho)$.
If $\beta$ is scalar, then clearly $[\beta]=[0]$ and $\psi_{[0]}=\mathbf{1}$,
so $\rho$ is not primitive. Thus, if $\rho$ is twist primitive,
then $\beta$ is non-scalar, so as in the previous case, $\rho$ is
regular.

The second statement follows from the proof of the first statement,
as we have actually showed that when $\bfG=\SL_{2}$ and $\rho$ is
primitive, then $\Omega_{r-1}(\rho)$ contains a non-scalar element
$\beta$ (or a non-zero class $[\beta]$ when $p=2$) and a non-scalar
element in $\M_{2}(\F_{q})$ is regular.

The third statement follows immediately from standard Clifford theory
(see, e.g., \cite[(6.11)]{Isaacs}) together with (\ref{eq: psi_betas in the good cases})
and (\ref{eq: stabiliser in the good cases}).
The fourth statement follows from the same Clifford theory result
together with (\ref{eq: psi_betas in the bad case}).
The last assertion follows by conjugation.
\end{proof}
By definition, a regular element in $\M_{2}(\cO_{i})$, $i\geq1$,
is $\GL_{2}(\cO_{i})$-conjugate to its companion matrix, which is
of the form $\begin{pmatrix}0 & 1\\
\Delta & \tau
\end{pmatrix}$, for $\Delta,\tau\in\cO_{i}$. Thus it is also $\GL_{2}(\cO_{i})$-conjugate
to $\begin{pmatrix}\tau & 1\\
\Delta & 0
\end{pmatrix}$, which is the form we will use henceforth. As $\GL_{2}(\cO_{i})=\SL_{2}(\cO_{i})\begin{pmatrix}\cO_{i}^{\times} & 0\\
0 & 1
\end{pmatrix}$, every regular element in $\M_{2}(\cO_{i})$ is $\SL_{2}(\cO_{i})$-conjugate
to an element of the form $\begin{pmatrix}\tau & \lambda\\
\lambda^{-1}\Delta & 0
\end{pmatrix}$, where $\lambda\in\cO_{i}^{\times}$. In particular, when $i=l'$,
every regular element in $\M_{2}(\cO_{l'})$ is $\SL_{2}(\cO_{l'})$-conjugate
to an element

\[
\beta:=\begin{pmatrix}\tau & \lambda\\
\lambda^{-1}\Delta & 0
\end{pmatrix},\qquad\Delta,\tau\in\cO_{l'},\ \lambda\in\cO_{l'}^{\times}.
\]

If $G=\SL_{2}(\cO_{r})$ and $p\neq2$, then by (\ref{eq: psi_betas in the good cases})
the $\psi_{\beta}\in\Irr(K^{l})$ that are contained in regular representations
 are parametrised by $\beta=\begin{pmatrix}0 & \lambda\\
\lambda^{-1}\Delta & 0
\end{pmatrix}\in\sl_{2}(\cO_{l'})$, that is, with $\tau=0$. The following simple lemma shows that the
analogous fact holds when $G_{}=\GL_{2}(\cO_{r})$, for any $p$,
as long as we consider regular representations that are trivial on
the centre $Z$ of $G$. 
\begin{lem}
\label{lem: tau can be taken as 0}Let
$G=\GL_{2}(\cO_{r})$ and let $\rho\in\Irr(G)$ be a regular representation
that is trivial on $K^{l}\cap Z$. Then $\rho$ contains a character
$\psi_{\beta}$, where $\beta=\begin{pmatrix}0 & 1\\
\Delta & 0
\end{pmatrix}\in\M_{2}(\cO_{l'})$ (that is, $\Tr(\beta)=0$). 
\end{lem}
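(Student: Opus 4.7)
The plan is to pick a representative of the orbit $\Omega_{l'}(\rho)$, put it into the standard form from the discussion preceding the lemma, use the triviality hypothesis on $K^{l}\cap Z$ to force the trace to vanish, and then absorb the unit on the anti-diagonal by a single diagonal $\GL_{2}$-conjugation.

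First, because $\rho$ is regular, Lemma~\ref{lem:constr_regular_chars_for_GL2_or_SL2} produces an element $\beta\in\Omega_{l'}(\rho)$ that is regular in $\M_{2}(\cO_{l'})$. The paragraph immediately preceding the present lemma shows that every regular element of $\M_{2}(\cO_{l'})$ is $\SL_{2}(\cO_{l'})$-conjugate to one of the form $\begin{pmatrix}\tau & \lambda\\ \lambda^{-1}\Delta & 0\end{pmatrix}$ with $\lambda\in\cO_{l'}^{\times}$. Any such $\SL_{2}(\cO_{l'})$-conjugation lifts to $\SL_{2}(\cO_{r})\leq G$ (via the surjection $G\twoheadrightarrow\GL_{2}(\cO_{l'})$), so the conjugate still lies in the $G$-orbit $\Omega_{l'}(\rho)$. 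Hence I may assume $\beta$ already has this shape.

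Next, I would translate the hypothesis $\rho|_{K^{l}\cap Z}=\mathbf{1}$ into a condition on $\beta$. Under the $G$-equivariant isomorphism $K^{l}\longiso\mfg(\cO_{l'})=\M_{2}(\cO_{l'})$, the intersection $K^{l}\cap Z$ corresponds to the scalar matrices $\cO_{l'}I$. Since $\psi_{\beta}$ appears in $\rho|_{K^{l}}$, triviality of $\rho$ on $K^{l}\cap Z$ forces
\[
\psi_{\beta}(1+\pi^{l}\hat{a}I)=\psi\bigl(\pi^{l}\hat{a}\Tr(\hat{\beta})\bigr)=1
\]
for every lift $\hat{a}\in\cO_{r}$ of any $a\in\cO_{l'}$. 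Because $\psi$ is primitive (does not factor through $\cO_{r-1}$), the character $x\mapsto\psi(tx)$ on $\cO_{r}$ is trivial on $\mfp^{l}$ exactly when $t\mfp^{l}=0$, i.e.\ $t\in\mfp^{r-l}=\mfp^{l'}$ (using $l+l'=r$). Applied with $t=\Tr(\hat{\beta})$, this yields $\Tr(\beta)=0$ in $\cO_{l'}$, so $\tau=0$.

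Finally, conjugating the remaining matrix by $\begin{pmatrix}\lambda & 0\\ 0 & 1\end{pmatrix}\in\GL_{2}(\cO_{l'})$ (again lifted to $G$ via the surjection $G\twoheadrightarrow\GL_{2}(\cO_{l'})$) sends $\begin{pmatrix}0 & \lambda\\ \lambda^{-1}\Delta & 0\end{pmatrix}$ to $\begin{pmatrix}0 & 1\\ \Delta & 0\end{pmatrix}$, giving an element of $\Omega_{l'}(\rho)$ of the required shape, so $\rho$ contains the corresponding $\psi_{\beta}$. The only slightly delicate step is the primitivity argument giving $\Tr(\beta)=0$; everything else is essentially a normalisation by a change of basis.
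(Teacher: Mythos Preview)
Your proof is correct and follows essentially the same argument as the paper: both reduce to a normalized $\beta$, then use the triviality on $K^{l}\cap Z$ together with the primitivity of $\psi$ to force $\tau=0$. The only cosmetic difference is that the paper immediately takes $\lambda=1$ via a $\GL_{2}(\cO_{l'})$-conjugation to companion-matrix form, whereas you first pass through the $\SL_{2}$-normalized form and conjugate away $\lambda$ at the end (the conjugating matrix should be $\begin{pmatrix}\lambda^{-1} & 0\\ 0 & 1\end{pmatrix}$ rather than its inverse under the convention $g\beta g^{-1}$, but this is harmless).
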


\begin{proof}
As was discussed in the beginning of this section, $\rho$ being regular
implies that it contains a $\psi_{\beta}$ with $\beta=\begin{pmatrix}\tau & 1\\
\Delta & 0
\end{pmatrix}\in\M_{2}(\cO_{l'})$. Since $\rho$ is trivial on $Z$, it is trivial when restricted
to $K^{l}\cap Z$. Thus, by Clifford's theorem, 
\[
\rho|_{K^{l}\cap Z}=\rho(1)\psi_{\beta}|_{K^{l}\cap Z}=\rho(1)\cdot \mathbf{1},
\]
hence $\psi_{\beta}|_{K^{l}\cap Z}=\mathbf{1}$. We
have $K^{l}\cap Z=\left\{ \begin{pmatrix}1+\pi^{l}a & 0\\
0 & 1+\pi^{l}a
\end{pmatrix}\mid a\in\cO_{r}\right\} $, so
\[
\psi_{\beta}\begin{pmatrix}1+\pi^{l}a & 0\\
0 & 1+\pi^{l}a
\end{pmatrix}=\psi(\hat{\tau}\pi^{l}a)=1,
\]
for all $a$ (where $\hat{\tau}\in\cO_{r}$ is an arbitrary lift of
$\tau$). Since $\psi(\mfp^{r-1})\neq1$, this implies that $\hat{\tau}\in\mfp^{l'}$,
hence $\tau=0$.
\end{proof}

\section{Extensions of $\psi_{\beta}$ to $U^{l'-v}K^{l}$}

In this section we show that the characters $\psi_{\beta}$ introduced
in the previous section extend ``by the same formula'' to a certain
group $U^{l'-v}K^{l}$ which will play a central role in the rest
of the paper.

In the previous section we saw that every regular element in $\M_{2}(\cO_{l'})$
is $\SL_{2}(\cO_{l'})$-conjugate to an element

\[
\beta=\begin{pmatrix}\tau & \lambda\\
\lambda^{-1}\Delta & 0
\end{pmatrix},\qquad\Delta,\tau\in\cO_{l'},\ \lambda\in\cO_{l'}^{\times}.
\]
For $\beta$ as above, we will write 
\[
\hat{\beta}=\begin{pmatrix}\hat{\tau} & \hat{\lambda}\\
\hat{\lambda}^{-1}\hat{\Delta} & 0
\end{pmatrix}\in\M_{2}(\cO_{r}),
\]
where $\hat{\Delta},\hat{\tau},\hat{\lambda}\in\cO_{r}$ are some
chosen lifts of $\Delta$, $\tau$ and $\lambda$, respectively. (Note
that not all lifts of $\beta$ are of this form, as the bottom right
entry of $\hat{\beta}$ is $0$.) Let 
\[
v=\min\{\val(\Delta),\val(\tau)\},
\]
where $\val(x)$ for $x\in\cO_{l'}$ is defined to be the largest
integer in $[0,l']$ such that $x\in\mfp^{\val(x)}$. Thus in particular,
\[
\hat{\Delta}\in\mfp^{v}\quad\text{and}\quad\hat{\tau}\in\mfp^{v}.
\]
Moreover, $v=l'$ if and only if $\Delta=\tau=0$. 

For any $i\geq0$, define the subgroup
\[
U^{i}=\begin{pmatrix}1 & \mfp^{i}\\
0 & 1
\end{pmatrix}
\]
 of $G$. We note that 
\begin{equation}
U^{l'-v}\subseteq C_{G}(\hat{\beta})K^{l'}.\label{eq:U^l'-v is in CK^l'}
\end{equation}
Indeed, $\rho_{l'}(C_{G}(\hat{\beta}))=C_{G_{l'}}(\beta)=\left\{ \begin{pmatrix}x+\tau y & \lambda y\\
\lambda^{-1}\Delta y & x
\end{pmatrix}\mid x,y\in\cO_{l'}\right\} \cap G_{l'}\supseteq\rho_{l'}(U^{l'-v})$, since for $y\in\mfp^{l'-v}$, we have $\Delta y\equiv0\mod{\mfp}^{l'}$
and $\tau y\equiv0\mod{\mfp}^{l'}$. The same argument shows that
\begin{equation}
U^{l-v}\subseteq C_{G}(\hat{\beta})K^{l}.\label{eq:U^l-v is in CK^l}
\end{equation}

\begin{prop}
\label{prop:There exist extns to U^l'-vK^l by formula and every extn is of this form}
Let $\beta$ and $\hat{\beta}$ be as above. The function $\psi_{\hat{\beta}}:U^{l'-v}K^{l}\rightarrow\C^{\times}$
given by
\[
\psi_{\hat{\beta}}(x)=\psi(\Tr(\hat{\beta}(x-1))),\qquad\text{for }x\in U^{l'-v}K^{l},
\]
is a linear character. If $\hat{\beta}'=\begin{pmatrix}\hat{\tau}' & \hat{\lambda}'\\
(\hat{\lambda}')^{-1}\hat{\Delta}' & 0
\end{pmatrix}\in\M_{2}(\cO_{r})$ is another lift of $\beta$, then $\psi_{\hat{\beta}}=\psi_{\hat{\beta}'}$
if and only if $\hat{\lambda}^{-1}\hat{\Delta}\equiv(\hat{\lambda}')^{-1}\hat{\Delta}'\mod{\mfp}^{l+v}$.
Moreover, every element of $\Irr(U^{l'-v}K^{l}\mid\psi_{\beta})$
is of the form $\psi_{\hat{\beta}}$.
\end{prop}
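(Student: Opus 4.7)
The plan is to establish the three parts of the proposition in order, using direct matrix and trace computations for the first two, and a Clifford-theoretic counting argument for the third.

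For multiplicativity, I will use the identity $xy - 1 = (x-1) + (y-1) + (x-1)(y-1)$ to reduce to showing the cocycle $\Tr(\hat\beta(x-1)(y-1))$ lies in $\mfp^r$ for all $x, y \in U^{l'-v}K^l$. Decomposing $x = u_1 k_1$ and $y = u_2 k_2$ with $u_i \in U^{l'-v}$ and $k_i = I + \pi^l Y_i \in K^l$, so that $x - 1 = F_1 + \pi^l u_1 Y_1$ with $F_1 = u_1 - I$, one expands $(x-1)(y-1)$ into four terms. The product $F_1 F_2$ vanishes since strictly upper triangular matrices square to zero; the $\pi^{2l}$ term vanishes since $2l \geq r$; and the two surviving cross-terms, after applying cyclicity of trace, take the form $\pi^l \Tr(\hat\beta F_1 \cdot u_2 Y_2)$ and $\pi^l \Tr(F_2 \hat\beta \cdot u_1 Y_1)$. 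A direct computation shows $\hat\beta F_1$ and $F_2 \hat\beta$ both have entries in $\mfp^{l'-v} \cdot \mfp^v = \mfp^{l'}$, using $\hat\tau, \hat\Delta \in \mfp^v$ and the prescribed vanishing $\hat\beta_{22} = 0$; multiplied by $\pi^l$, the cross-terms land in $\mfp^r$.

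For the equality criterion, since $\psi_{\hat\beta}$ is a character it is determined by its values on the generating subgroups $K^l$ and $U^{l'-v}$. The restriction to $K^l$ always equals $\psi_\beta$ (the formula depends on $\hat\beta$ only modulo $\mfp^{l'}$), while on $U^{l'-v}$ we get $\psi_{\hat\beta}(I + F) = \psi(\hat\lambda^{-1}\hat\Delta \cdot a)$ where $a$ is the $(1,2)$-entry of $F$. Non-degeneracy of $\psi$, which has conductor $\mfp^r$, then gives the claimed congruence $\hat\lambda^{-1}\hat\Delta \equiv (\hat\lambda')^{-1}\hat\Delta' \pmod{\mfp^{l+v}}$ as the precise condition for the two characters to agree on $U^{l'-v}$.

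For the final statement, I first check that $\psi_\beta$ is $U^{l'-v}$-stable by a short computation of $u\hat\beta u^{-1} - \hat\beta$, which has all entries in $\mfp^{l'}$. Since $K^l$ is abelian and $\psi_{\hat\beta}$ provides one linear extension, standard Clifford theory identifies $\Irr(U^{l'-v}K^l\mid\psi_\beta)$ with the set of linear extensions of $\psi_\beta$, a torsor under $\Hom(U^{l'-v}K^l/K^l, \C^\times)$ of cardinality $[U^{l'-v}K^l : K^l] = q^{l-l'+v}$. On the other hand, by the equality criterion the distinct $\psi_{\hat\beta}$ are indexed by $\hat\lambda^{-1}\hat\Delta$ modulo $\mfp^{l+v}$, and as the lift varies this quantity ranges through a full coset of $\mfp^{l'}$, yielding exactly $|\mfp^{l'}/\mfp^{l+v}| = q^{l-l'+v}$ distinct characters. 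Matching counts forces every element of $\Irr(U^{l'-v}K^l\mid\psi_\beta)$ to equal some $\psi_{\hat\beta}$. The main obstacle is the cocycle computation of step one, where the exact form of $\hat\beta$, and in particular the vanishing $\hat\beta_{22} = 0$, is indispensable for keeping the cross-terms in $\mfp^r$.
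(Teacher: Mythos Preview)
Your proof is correct and follows essentially the same strategy as the paper's: a direct verification of multiplicativity using $\hat\tau,\hat\Delta\in\mfp^{v}$, the non-degeneracy of $\psi$ for the equality criterion, and a counting argument for the exhaustion claim. The only difference is organizational---where the paper multiplies two generic matrices of $U^{l'-v}K^{l}$ and reads off the result, you use the cocycle identity $xy-1=(x-1)+(y-1)+(x-1)(y-1)$ together with the factorization $x=uk$, and for the equality criterion you reduce to the generating subgroups $K^{l}$ and $U^{l'-v}$ rather than evaluating on a general element; the underlying computations are the same.
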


\begin{proof}
Any element of $U^{l'-v}K^{l}$ has the form 
\[
\begin{pmatrix}1+a\pi^{l} & b\pi^{l'-v}\\
c\pi^{l} & 1+d\pi^{l}
\end{pmatrix}
\]
for some $a,b,c,d\in\cO_{r}$ and thus
\begin{align*}
\psi_{\hat{\beta}}\left(\begin{pmatrix}1+a\pi^{l} & b\pi^{l'-v}\\
c\pi^{l} & 1+d\pi^{l}
\end{pmatrix}\right) & =\psi\left(\Tr\left(\hat{\beta}\begin{pmatrix}a\pi^{l} & b\pi^{l'-v}\\
c\pi^{l} & d\pi^{l}
\end{pmatrix}\right)\right)\\
 & =\psi(\hat{\tau}a\pi^{l}+\hat{\lambda}c\pi^{l}+b\hat{\lambda}^{-1}\hat{\Delta}\pi^{l'-v}).
\end{align*}
Note that $\hat{\tau}a\pi^{l}$ and $\hat{\lambda}c\pi^{l}$ are independent
of the choices of lifts $\hat{\tau}$ and $\hat{\lambda}$ of $\tau$
and $\lambda$, respectively, and therefore $\psi_{\hat{\beta}}$
only depends on the image of $\hat{\lambda}^{-1}\hat{\Delta}$ mod
$\mfp^{l+v}$. 

We first show that $\psi_{\hat{\beta}}$ is a linear character of
$U^{l'-v}K^{l}$. For any $a',b',c',d'\in\cO_{r}$,
\begin{align*}
 & \psi_{\hat{\beta}}\left(\begin{pmatrix}1+a\pi^{l} & b\pi^{l'-v}\\
c\pi^{l} & 1+d\pi^{l}
\end{pmatrix}\begin{pmatrix}1+a'\pi^{l} & b'\pi^{l'-v}\\
c'\pi^{l} & 1+d'\pi^{l}
\end{pmatrix}\right)\\
 & =\psi_{\hat{\beta}}\left(\begin{pmatrix}1+(a+a')\pi^{l}+bc'\pi^{r-v} & ((b+b')+(ab'+bd')\pi^{l})\pi^{l'-v}\\
(c+c')\pi^{l} & \cdots
\end{pmatrix}\right)\\
 & =\psi(\hat{\tau}(a+a')\pi^{l}+\hat{\tau}bc'\pi^{r-v}+\hat{\lambda}(c+c')\pi^{l}+((b+b')+(ab'+bd')\pi^{l})\hat{\lambda}^{-1}\hat{\Delta}\pi^{l'-v})\\
 & =\psi(\hat{\tau}(a+a')\pi^{l}+\hat{\lambda}(c+c')\pi^{l}+(b+b')\hat{\lambda}^{-1}\hat{\Delta}\pi^{l'-v})\quad(\text{since }\hat{\Delta},\hat{\tau}\in\mfp^{v})\\
 & =\psi(\hat{\tau}a\pi^{l}+\hat{\lambda}c\pi^{l}+b\hat{\lambda}^{-1}\hat{\Delta}\pi^{l'-v})\psi(\hat{\tau}a'\pi^{l}+\hat{\lambda}c'\pi^{l}+b'\hat{\lambda}^{-1}\hat{\Delta}\pi^{l'-v}).
\end{align*}
Thus $\psi_{\hat{\beta}}$ is a homomorphism.

We have already seen that $\psi_{\hat{\beta}}$ only depends on $\hat{\lambda}^{-1}\hat{\Delta}$
mod $\mfp^{l+v}$. Conversely, if $\psi_{\hat{\beta}}=\psi_{\hat{\beta}'}$,
then
\[
\psi(\hat{\tau}a\pi^{l}+\hat{\lambda}c\pi^{l}+b\hat{\lambda}^{-1}\hat{\Delta}\pi^{l'-v})=\psi(\hat{\tau}'a\pi^{l}+\hat{\lambda}'c\pi^{l}+b(\hat{\lambda}')^{-1}\hat{\Delta}'\pi^{l'-v}),
\]
for all $a,b,c\in\cO_{r}$ ($\hat{\tau}a\pi^{l}=\hat{\tau}'a\pi^{l}$
and $\hat{\lambda}c\pi^{l}=\hat{\lambda}'c\pi^{l}$ in $\cO_{r}$),
so $\psi(b\pi^{l'-v}(\hat{\lambda}^{-1}\hat{\Delta}-(\hat{\lambda}')^{-1}\hat{\Delta}'))=1$
for all $b$; hence $\pi^{l'-v}(\hat{\lambda}^{-1}\hat{\Delta}-(\hat{\lambda}')^{-1}\hat{\Delta}')=0$
(as $\psi$ is non-trivial on $\mfp^{r-1}$), that is, 
\[
\hat{\lambda}^{-1}\hat{\Delta}-(\hat{\lambda}')^{-1}\hat{\Delta}'\in\mfp^{r-(l'-v)}=\mfp^{l+v}.
\]
We now show that every element of $\Irr(U^{l'-v}K^{l}\mid\psi_{\beta})$
is of the form $\psi_{\hat{\beta}}$. By definition, every $\psi_{\hat{\beta}}$
is an extension of $\psi_{\beta}$. The number of elements of $\Irr(U^{l'-v}K^{l}\mid\psi_{\beta})$
is $|U^{l'-v}K^{l}/K^{l}|=|U^{l'-v}/U^{l}|=q^{v+1}$. But we have
shown that the number of distinct characters $\psi_{\hat{\beta}}$
is equal to the number of distinct lifts of $\lambda^{-1}\Delta\in\cO_{l'}$
to $\cO_{l+v}$, which also equals $q^{v+1}$. Thus the $\psi_{\hat{\beta}}$
exhaust $\Irr(U^{l'-v}K^{l}\mid\psi_{\beta})$.
\end{proof}

\section{\label{sec:A-family-of}Special group elements
and intersections with their centralisers}

As expressed in Lemma~\ref{lem:in conj rep if trivial on C},
a character $\rho\in\Irr(G)$ is contained in the conjugation character
if and only if $\rho$ restricted to some centraliser $C_{G}(g)$,
$g\in G$, contains the trivial character. In this section we will define a family of elements that satisfy
this, for the various irreducible characters of $G$ with $p\neq2$, as well as for many characters of $\SL_{2}(\cO_r)$ when $p=2$. 

For any $\hat{\beta}=\begin{pmatrix}0 & \hat{\lambda}\\
\hat{\lambda}^{-1}\hat{\Delta} & 0
\end{pmatrix}\in\M_{2}(\cO_{r})$, $\hat{\lambda}\in\cO_{r}^{\times}$, we define the element
\[
g_{\hat{\beta}}=\begin{pmatrix}1 & \hat{\lambda}\\
-\hat{\lambda}^{-1}\hat{\Delta} & 1-\hat{\Delta}
\end{pmatrix}\in\SL_{2}(\cO_{r}).
\]

\begin{rem}
Even though $g_{\hat{\beta}}$ is closely related to $\Big(\begin{smallmatrix}1 & \hat{\lambda}\\
\hat{\lambda}^{-1}\hat{\Delta} & 1+\hat{\Delta}
\end{smallmatrix}\Big)\in\SL_{2}(\cO_{r})$, this latter element would not work for our purposes as the proof
of the crucial Lemma~\ref{lem: psi_hat-beta res to U^l'-vK^l cap C is the trivial char}
would fail.
\end{rem}

Since $g_{\hat{\beta}}$ is always regular as an element of $\GL_{2}(\cO_{r})$,
its centraliser in $G$ is
\begin{equation}
C_{G}(g_{\hat{\beta}})=\cO_{r}[g_{\hat{\beta}}]\cap G=\left\{ \begin{pmatrix}x & y\hat{\lambda}\\
-y\hat{\lambda}^{-1}\hat{\Delta} & x-y\hat{\Delta}
\end{pmatrix}\mid x,y\in\cO_{r}\right\} \cap G.\label{eq:centraliser of g_beta}
\end{equation}
For an element $x\in\cO_{r}$ and an integer $r\geq i\geq1$, we will
write $x_{i}:=\rho_{i}(x)\in\cO_{i}$ for the image of $x \mod \mfp^{i}$.
This notation is extended to matrices in $\M_{2}(\cO_{r})$ in the
obvious way. Let 
\[
w:=\val(\hat{\Delta}_{l}).
\]
Note that if $v<l'$, then $w=\val(\hat{\Delta}_{l})=\val(\Delta)=v$,
but if $v=l'$ (i.e., $\Delta=0$), then we can have $w=l'$ or $l$
(e.g., if $\hat{\Delta}=0$, then $w=l$ but if $\hat{\Delta}=\pi^{l'}$,
then $w=l'$). Note also that we always have $l'-v \leq l-w$, so  $U^{l-w} \subseteq  U^{l'-v}$ and therefore (\ref{eq:U^l-v is in CK^l}) implies that
\begin{equation}
\label{eq:Ul-w inside is in CK^l}
U^{l-w}\subseteq C_{G}(\hat{\beta})K^{l}.
\end{equation}

\begin{lem}
\label{lem:intersection of centralisers in ZUK}Let
$\beta=\begin{pmatrix}0 & \lambda\\
\lambda^{-1}\Delta & 0
\end{pmatrix}\in\sl_{2}(\cO_{l'}),\lambda\in\cO_{l'}^{\times}$ and let $\hat{\beta}=\begin{pmatrix}0 & \hat{\lambda}\\
\hat{\lambda}^{-1}\hat{\Delta} & 0
\end{pmatrix}\in\sl_{2}(\cO_{r})$ be a lift of $\beta$. Then 
\begin{enumerate}
\item $C_{G}(\hat{\beta})K^{l'}\cap C_{G}(g_{\hat{\beta}})=ZU^{l'-v}K^{l'}\cap C_{G}(g_{\hat{\beta}})$;
\item $C_{G}(\hat{\beta})K^{l}\cap C_{G}(g_{\hat{\beta}})=ZU^{l-w}K^{l}\cap C_{G}(g_{\hat{\beta}})$.
\end{enumerate}
\end{lem}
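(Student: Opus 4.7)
The plan is to prove both inclusions in each part. The ``$\supseteq$'' direction is immediate from three observations: $Z$ consists of scalar matrices and hence sits inside $C_{G}(\hat{\beta})$; $U^{l'-v} \subseteq C_{G}(\hat{\beta})K^{l'}$ by \eqref{eq:U^l'-v is in CK^l'} (respectively $U^{l-w} \subseteq C_{G}(\hat{\beta})K^{l}$ by \eqref{eq:Ul-w inside is in CK^l}); and $K^{l'}$ (respectively $K^{l}$) lies in $C_{G}(\hat{\beta})K^{l'}$ (respectively $C_{G}(\hat{\beta})K^{l}$) trivially. For the reverse inclusion in part (i), I would take any $x \in C_{G}(\hat{\beta})K^{l'} \cap C_{G}(g_{\hat{\beta}})$ and study its image $\rho_{l'}(x) \in G_{l'}$. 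Using that $\rho_{l'}(C_{G}(\hat{\beta})) \subseteq C_{G_{l'}}(\beta)$, this image lies in $C_{G_{l'}}(\beta) \cap C_{G_{l'}}(\rho_{l'}(g_{\hat{\beta}}))$, and it suffices to show that this intersection is contained in $\rho_{l'}(ZU^{l'-v})$.

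The heart of the proof is therefore the matrix calculation for this intersection of centralisers in $G_{l'}$. Since $\rho_{l'}(g_{\hat{\beta}})$ has the unit $\hat{\lambda}_{l'} = \lambda$ in its upper-right entry, it is non-scalar, hence regular over $\cO_{l'}$, so by Hill's theorem its centraliser in $\M_{2}(\cO_{l'})$ equals $\cO_{l'}I + \cO_{l'}\rho_{l'}(g_{\hat{\beta}})$. Writing an arbitrary element as $cI + d\rho_{l'}(g_{\hat{\beta}})$ and requiring that it also commute with $\beta$ reduces, via a direct $2 \times 2$ matrix calculation, to the single equation $d\Delta = 0$ in $\cO_{l'}$, i.e.\ $d \in \mfp^{l'-v}$. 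Using $\lambda^{-1}\Delta \in \mfp^{v}$, so that $d\lambda^{-1}\Delta$ vanishes in $\cO_{l'}$, the resulting centraliser element has the upper-triangular form $\begin{pmatrix} c+d & d\lambda \\ 0 & c+d \end{pmatrix}$ with $d \in \mfp^{l'-v}$. Intersecting with $G_{l'}$, that is, imposing the determinant condition, places the diagonal entry in the centre of $G_{l'}$, so the element lies in $\rho_{l'}(Z) \cdot \rho_{l'}(U^{l'-v}) = \rho_{l'}(ZU^{l'-v})$; in the $\GL_{2}$ case the diagonal entry is any unit, and in the $\SL_{2}$ case with $p$ odd it is forced to be $\pm 1 \in Z(\SL_{2})$. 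Hence $x \in ZU^{l'-v}K^{l'}$, completing part (i).

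Part (ii) is handled by applying exactly the same argument at level $l$ in place of $l'$, with $\Delta \in \cO_{l'}$ replaced by $\hat{\Delta}_{l} \in \cO_{l}$ and using \eqref{eq:Ul-w inside is in CK^l} in the easy direction. The equation $d\hat{\Delta}_{l} = 0$ in $\cO_{l}$ then forces $d \in \mfp^{l-w}$ by the definition of $w$, and the analogue of the above identifies the intersection with $\rho_{l}(ZU^{l-w})$. The main technical step is the $2\times 2$ matrix calculation for the intersection of the two centralisers and verifying that the off-diagonal valuation matches $l'-v$ (respectively $l-w$) exactly; this is where the precise form of $g_{\hat{\beta}}$, in particular the specific sign in the bottom-left entry $-\hat{\lambda}^{-1}\hat{\Delta}$, is crucial, as it produces the clean upper-triangular shape and the single valuation condition $d\Delta = 0$ rather than a weaker intersection strictly between $ZU^{l'-v}K^{l'}$ and $C_{G}(\hat{\beta})K^{l'}$.
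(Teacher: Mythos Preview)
Your proof is correct and follows essentially the same approach as the paper's. Both reduce modulo $\mfp^{l'}$ (respectively $\mfp^{l}$), parametrise the two centralisers using regularity, and extract the single constraint $d\Delta=0$ (the paper obtains it by equating the two diagonal entries of the common matrix, you by computing the commutator with $\beta$ and reading off the $(1,2)$-entry); the reverse inclusion via \eqref{eq:U^l'-v is in CK^l'} and \eqref{eq:Ul-w inside is in CK^l} is identical. Your explicit remark that one needs the surjectivity $Z\twoheadrightarrow Z_{G_{l'}}$ (automatic for $\GL_2$, and for $\SL_2$ when $p\neq 2$) is a point the paper leaves implicit.
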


\begin{proof}
Let $h\in C_{G}(\hat{\beta})K^{l'}\cap C_{G}(g_{\hat{\beta}})$. Then
\begin{align*}
h_{l'} & \in C_{G_{l'}}(\beta)\cap C_{G_{l'}}\Big(\begin{smallmatrix}1 & \lambda\\
-\lambda^{-1}\Delta & 1-\Delta
\end{smallmatrix}\Big)\\
 & \subseteq\left\{ \begin{pmatrix}a & b\lambda\\
b\lambda^{-1}\Delta & a
\end{pmatrix}\mid a,b\in\cO_{l'}\right\} \cap\left\{ \begin{pmatrix}x & y\lambda\\
-y\lambda^{-1}\Delta & x-y\Delta
\end{pmatrix}\mid x,y\in\cO_{l'}\right\}.
\end{align*}
A matrix in this intersection is of the form
\[
\begin{pmatrix}a & b\lambda\\
b\lambda^{-1}\Delta & a
\end{pmatrix}=\begin{pmatrix}x & y\lambda\\
-y\lambda^{-1}\Delta & x-y\Delta
\end{pmatrix},
\]
which implies that $y\Delta=0$ in $\cO_{l'}$, that is, $y\in\mfp^{l'-v}.$
Hence
\[
h_{l'}\in\left\{ \begin{pmatrix}x & y\\
0 & x
\end{pmatrix}\mid x\in\cO_{l'},\,y\in\mfp^{l'-v}\right\} \cap G_{l'},
\]
and therefore $h\in ZU^{l'-v}K^{l'}$. We have now established that
\[
C_{G}(\hat{\beta})K^{l'}\cap C_{G}(g_{\hat{\beta}})\subseteq ZU^{l'-v}K^{l'},
\]
so trivially also $C_{G}(\hat{\beta})K^{l'}\cap C_{G}(g_{\hat{\beta}})\subseteq ZU^{l'-v}K^{l'}\cap C_{G}(g_{\hat{\beta}})$.
On the other hand, $ZU^{l'-v}K^{l'}\subseteq C_{G}(\hat{\beta})K^{l'}$
by (\ref{eq:U^l'-v is in CK^l'}), so $ZU^{l'-v}K^{l'}\cap C_{G}(g_{\hat{\beta}})\subseteq C_{G}(\hat{\beta})K^{l'}\cap C_{G}(g_{\hat{\beta}})$
and therefore 
\[
C_{G}(\hat{\beta})K^{l'}\cap C_{G}(g_{\hat{\beta}})=ZU^{l'-v}K^{l'}\cap C_{G}(g_{\hat{\beta}}).
\]

Next, let $h\in C_{G}(\hat{\beta})K^{l}\cap C_{G}(g_{\hat{\beta}})$.
Then 
\begin{align*}
h_{l} & \in C_{G_{l}}(\hat{\beta}_{l})\cap C_{G_{l}}\Big(\begin{smallmatrix}1 & \hat{\lambda}_{l}\\
-\hat{\lambda}_{l}^{-1}\hat{\Delta}_{l} & 1-\hat{\Delta}_{l}
\end{smallmatrix}\Big)\\
 & \subseteq\left\{ \begin{pmatrix}a & b\hat{\lambda}_{l}\\
b\hat{\Delta}_{l} & a
\end{pmatrix}\mid a,b\in\cO_{l}\right\} \cap\left\{ \begin{pmatrix}x & y\hat{\lambda}_{l}\\
-y\hat{\lambda}_{l}^{-1}\hat{\Delta}_{l} & x-y\hat{\Delta}_{l}
\end{pmatrix}\mid x,y\in\cO_{l}\right\}.
\end{align*}
A matrix in this intersection is of the form
\[
\begin{pmatrix}a & b\hat{\lambda}_{l}\\
b\hat{\Delta}_{l} & a
\end{pmatrix}=\begin{pmatrix}x & y\hat{\lambda}_{l}\\
-y\hat{\lambda}_{l}^{-1}\hat{\Delta}_{l} & x-y\hat{\Delta}_{l}
\end{pmatrix},
\]
which implies that 
\[
y\hat{\Delta}_{l}=0.
\]
If $w=l$ (i.e., $\hat{\Delta}_{l}=0$), then trivially $y\in\mfp^{l-w}=\mfp^{0}=\cO_{l}$.
Otherwise $w<l$ and therefore $y\in\mfp^{l-w}$. Thus, in either
case, $y\in\mfp^{l-w}$ and
\[
h_{l}\in\left\{ \begin{pmatrix}x & y\\
0 & x
\end{pmatrix}\mid x\in\cO_{l},\,y\in\mfp^{l-w}\right\} \cap G_{l},
\]
so that $h\in ZU^{l-w}K^{l}$. Thus 
\[
C_{G}(\hat{\beta})K^{l}\cap C_{G}(g_{\hat{\beta}})\subseteq ZU^{l-w}K^{l}
\]
 and by the same argument as above, but using (\ref{eq:U^l-v is in CK^l}),
we obtain 
\[
C_{G}(\hat{\beta})K^{l}\cap C_{G}(g_{\hat{\beta}})=ZU^{l-w}K^{l}\cap C_{G}(g_{\hat{\beta}}).
\]
\end{proof}
Even though the following lemma has an extremely simple proof, it
embodies one of the key properties (together with Lemma~\ref{lem:intersection of centralisers in ZUK})
that we need about the elements $g_{\hat{\beta}}$ and will be used
repeatedly in the proof of our main theorems.
\begin{lem}
\label{lem: psi_hat-beta res to U^l'-vK^l cap C is the trivial char}Let
$\beta=\begin{pmatrix}0 & \lambda\\
\lambda^{-1}\Delta & 0
\end{pmatrix}\in\sl_{2}(\cO_{l'}),\lambda\in\cO_{l'}^{\times}$. For every lift $\hat{\beta}=\begin{pmatrix}0 & \hat{\lambda}\\
\hat{\lambda}^{-1}\hat{\Delta} & 0
\end{pmatrix}\in\sl_{2}(\cO_{r})$ of $\beta$, the restriction of $\psi_{\hat{\beta}}$ to $U^{l'-v}K^{l}\cap C_{G}(g_{\hat{\beta}})$
is the trivial character.
\end{lem}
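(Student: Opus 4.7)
The plan is to take an arbitrary element $x \in U^{l'-v}K^{l} \cap C_{G}(g_{\hat{\beta}})$, write it in the two possible shapes coming from each of the two groups, and extract enough relations between the entries to force $\psi_{\hat{\beta}}(x) = 1$ from the formula in Proposition~\ref{prop:There exist extns to U^l'-vK^l by formula and every extn is of this form}.

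First I would recall that a general element of $U^{l'-v}K^{l}$ has the shape $\begin{pmatrix} 1+a\pi^{l} & b\pi^{l'-v} \\ c\pi^{l} & 1+d\pi^{l} \end{pmatrix}$ for some $a,b,c,d \in \cO_{r}$, and that on such an element the formula in the proposition, combined with $\hat{\tau}=0$, simplifies to
\[
\psi_{\hat{\beta}}\!\begin{pmatrix} 1+a\pi^{l} & b\pi^{l'-v} \\ c\pi^{l} & 1+d\pi^{l} \end{pmatrix} = \psi\bigl(\hat{\lambda}\, c\pi^{l} + b\,\hat{\lambda}^{-1}\hat{\Delta}\,\pi^{l'-v}\bigr).
\]
On the other hand, by \eqref{eq:centraliser of g_beta}, an element of $C_{G}(g_{\hat{\beta}})$ has the shape $\begin{pmatrix} x & y\hat{\lambda} \\ -y\hat{\lambda}^{-1}\hat{\Delta} & x - y\hat{\Delta} \end{pmatrix}$ for some $x,y \in \cO_{r}$.

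Equating the two expressions entry by entry produces $y\hat{\lambda} = b\pi^{l'-v}$ from the top-right entry and $c\pi^{l} = -y\hat{\lambda}^{-1}\hat{\Delta}$ from the bottom-left entry. Eliminating $y = b\hat{\lambda}^{-1}\pi^{l'-v}$ gives the single crucial identity
\[
\hat{\lambda}\, c\pi^{l} = -b\,\hat{\lambda}^{-1}\hat{\Delta}\,\pi^{l'-v}.
\]
Substituting this into the expression above yields that the argument of $\psi$ is $-b\hat{\lambda}^{-1}\hat{\Delta}\pi^{l'-v} + b\hat{\lambda}^{-1}\hat{\Delta}\pi^{l'-v} = 0$, so $\psi_{\hat{\beta}}(x) = 1$.

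I do not really expect a serious obstacle here; the lemma is essentially a direct computation, which is exactly the reason the element $g_{\hat{\beta}}$ was designed the way it was (note the minus sign in the $(2,1)$-entry of $g_{\hat{\beta}}$ together with the identity entries on the diagonal of the top-left block is precisely what makes the two contributions to $\psi_{\hat{\beta}}$ cancel). The only mild care is to check that the derivation above is valid even in the degenerate case $v = l'$, in which $\hat{\Delta} \in \mfp^{l'}$ so that $b\hat{\lambda}^{-1}\hat{\Delta}\pi^{l'-v} \in \mfp^{r} = 0$ automatically and both terms in the exponent of $\psi$ vanish trivially; the argument therefore goes through uniformly in $v$.
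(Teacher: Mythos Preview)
Your proposal is correct and matches the paper's proof in spirit: both are direct computations showing that the two off-diagonal contributions to $\psi_{\hat{\beta}}$ cancel. The paper streamlines this slightly by plugging the centraliser form of $h$ directly into the trace definition $\psi_{\hat{\beta}}(h)=\psi(\Tr(\hat{\beta}(h-1)))$ and reading off $\psi(-y\hat{\Delta}+y\hat{\Delta})=1$, rather than first passing through the simplified formula from Proposition~\ref{prop:There exist extns to U^l'-vK^l by formula and every extn is of this form}.
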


\begin{proof}
Let $h\in U^{l'-v}K^{l}\cap C_{G}(g_{\hat{\beta}})$. By (\ref{eq:centraliser of g_beta}),
$h$ is of the form
\[
h=\begin{pmatrix}x & y\hat{\lambda}\\
-y\hat{\lambda}^{-1}\hat{\Delta} & x-y\hat{\Delta}
\end{pmatrix},\qquad x\in1+\mfp^{l},\ y\in\mfp^{l'-v}
\]
and thus
\begin{align*}
\psi_{\hat{\beta}}(h) & =\psi\left(\Tr\begin{pmatrix}0 & \hat{\lambda}\\
\hat{\lambda}^{-1}\hat{\Delta} & 0
\end{pmatrix}\begin{pmatrix}x-1 & y\hat{\lambda}\\
-y\hat{\lambda}^{-1}\hat{\Delta} & x-1-y\hat{\Delta}
\end{pmatrix}\right)\\
 & =\psi(-y\hat{\Delta}+\hat{\Delta}y)=\psi(0)=1.
\end{align*}
\end{proof}

\section{The case when $p$ is odd}

The goal of this section is to prove that our main theorem for $p\neq2$,
that is, that when $p\neq2$, any $\rho\in\Irr(G)$ that is trivial
on $Z$ is contained in the conjugation character of $G$. The non-twist
primitive characters are relatively easy to take care of in this case,
so the main focus is regular representations and we show that if $G_{}=\GL_{2}(\cO_{r})$
or $G_{}=\SL_{2}(\cO_{r})$ and $p\neq2$, then any regular representation
that is trivial on $Z$ is contained in the conjugation character
of $G$. Our approach necessitates three cases: The case when $r=2l$
is, as always in the representation theory of $G$, easier; the case
when $r\neq2l$ and $v<l'$ is dealt with via a `Heisenberg lift' from
$U^{l'-v}K^{l}$ to $U^{l'-v}K^{l'}$ and the case $v=l'$ requires
a different approach to constructing the regular characters. These
cases are dealt with in their respective subsections. 

Our first step is to prove results that enable us to handle non-twist
primitive characters. If $\rho\in\Irr(G)$ is not twist primitive,
then, by definition, there is a linear character $\chi$ of $G$ such
that $\chi\rho$ is trivial on $K^{r-1}$. Since $\chi$ is trivial
on $[G,G]$, this implies that $\rho$ is trivial on $[G,G]\cap K^{r-1}$.
\begin{lem}
\label{lem:Commutator kernel containment}Assume that
either $G_{}=\GL_{2}(\cO_{r})$ or $G_{}=\SL_{2}(\cO_{r})$ and $p\neq2$.
Then 
\[
[G_{},G]\cap K^{r-1}\supseteq\SL_{2}(\cO_{r})\cap K^{r-1}=:\KSL^{r-1}.
\]
\end{lem}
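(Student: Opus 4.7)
The plan is to identify $K^{r-1}$ with the additive group $(\mfg(\F_{q}),+)$ via $I+\pi^{r-1}H\leftrightarrow H$, which is a group isomorphism because $r\geq 2$ forces $\pi^{2(r-1)}=0$ in $\cO_{r}$. A direct determinant computation gives $\det(I+\pi^{r-1}H)=1+\pi^{r-1}\Tr(H)$ in $\cO_{r}$, so the subgroup $\SL_{2}(\cO_{r})\cap K^{r-1}$ corresponds to $\sl_{2}(\F_{q})\subseteq \mfg(\F_{q})$. The task is therefore to realise every element of $\sl_{2}(\F_{q})$ in $[G,G]\cap K^{r-1}$.

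The key computation is the commutator identity: for $g\in G$ with reduction $\bar{g}\in G_{1}$ and $h=I+\pi^{r-1}H\in K^{r-1}$,
\[
[g,h]=I+\pi^{r-1}(\Ad(\bar{g})H-H).
\]
This follows from $h^{-1}=I-\pi^{r-1}H$ (again using $\pi^{2(r-1)}=0$) together with the fact that $\Ad(g)$ on $\pi^{r-1}\mfg(\F_{q})$ factors through $\bar{g}$, equivalently $[K^{1},K^{r-1}]\subseteq K^{r}=\{1\}$. Since $K^{r-1}$ is abelian, products of such commutators translate into sums of coboundaries, so the problem reduces to showing that the $\F_{q}$-subspace of $\mfg(\F_{q})$ spanned by all coboundaries $\Ad(\bar{g})H-H$ (with $\bar{g}\in G_{1}$, $H\in\mfg(\F_{q})$) contains $\sl_{2}(\F_{q})$.

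Finally, I would exhibit concrete coboundaries. For $\bfG=\GL_{2}$ (any $p$): taking $H=\bigl(\begin{smallmatrix}1 & 0\\0 & 0\end{smallmatrix}\bigr)$ with $\bar{g}$ upper (resp.\ lower) unipotent yields all strictly upper (resp.\ lower) triangular matrices, and $\bar{g}=\bigl(\begin{smallmatrix}0 & 1\\1 & 0\end{smallmatrix}\bigr)$ with the same $H$ gives $\bigl(\begin{smallmatrix}-1 & 0\\0 & 1\end{smallmatrix}\bigr)$; together these span $\sl_{2}(\F_{q})$. For $\bfG=\SL_{2}$ with $p\neq 2$: both $\bar{g}$ and $H$ are constrained, but a unipotent $\bar{g}\in\SL_{2}(\F_{q})$ paired with $H=\bigl(\begin{smallmatrix}0 & 0\\1 & 0\end{smallmatrix}\bigr)$ (and its transpose) produces coboundaries whose entries involve factors of $\pm 2$, nonzero precisely because $p\neq 2$, and these together with the analogous lower-triangular versions span $\sl_{2}(\F_{q})$. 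I expect the main obstacle to be the case $q=3$ in the $\SL_{2}$ setting: since every $a\in\F_{3}^{\times}$ satisfies $a^{2}=1$, the split torus of $\SL_{2}(\F_{3})$ produces no coboundaries via $\Ad(\mathrm{diag}(a,a^{-1}))H-H$, and one is forced to rely on unipotent conjugation, which is exactly the step in which the hypothesis $p\neq 2$ becomes essential.
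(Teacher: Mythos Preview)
Your proof is correct and takes a genuinely different route from the paper's. You compute commutators $[g,h]$ with $g\in G$ arbitrary and $h\in K^{r-1}$, obtaining adjoint coboundaries $\Ad(\bar g)H-H$ in $\mfg(\F_q)$, and then exhibit enough explicit coboundaries to span $\sl_2(\F_q)$. The paper instead computes commutators $[1+\pi^i X,\,1+\pi^j Y]$ with $i+j=r-1$ (taking $i=l$, $j=l-1$ when $r$ is even and $i=j=l'$ when $r$ is odd), obtaining $1+\pi^{r-1}(XY-YX)$, and then invokes the perfectness of $\sl_2(\F_q)$ for $p\neq 2$ in one line. The paper's argument is shorter and more conceptual; yours is more hands-on and has the minor advantage of making the $\GL_2$ case work transparently for all $p$ (consistent with the remark following the lemma). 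Your $\SL_2$ sketch is a little imprecise---the factor of $2$ does not appear in a single coboundary with $H=f$, but rather from combining coboundaries for two different unipotent parameters (e.g.\ $t=1$ and $t=-1$ give $h-e$ and $-h-e$, whose sum and difference are $-2e$ and $2h$), or equivalently from taking $H=h$ and using $\Ad(\bar g_1)h-h=-2e$---but the idea is right and the argument goes through.
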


\begin{proof}
When $r=2l$, direct calculation shows that for $1+\pi^{l}X\in K^{l}$
and $1+\pi^{l-1}Y\in K^{l-1}$, 
\[
[1+\pi^{l}X,1+\pi^{l-1}Y]=1+\pi^{r-1}(XY-YX).
\]
 Similarly, when $r=2l'+1$, for $1+\pi^{l'}X,1+\pi^{l'}Y\in K^{l'}$,
\[
[1+\pi^{l'}X,1+\pi^{l'}Y]=1+\pi^{r-1}(XY-YX).
\]
We have $\KSL^{r-1}=1+\pi^{r-1}\sl_{2}(\cO_{r})$ and it is well-known
that when $p\neq2$, $\sl_{2}(\F_{q})$ is a perfect Lie algebra.
Thus, since $1+\pi^{r-1}(XY-YX)$ only depends on the images of $X$
and $Y$ modulo $\mfp$, every element of $\KSL^{r-1}$ is in $[G,G]$,
hence in $[G,G]\cap K^{r-1}$.
\end{proof}
\begin{rem}
The above lemma holds even for $\GL_{2}(\cO_{r})$ when $\F_{q}=\F_{2}$,
as well as for $\SL_{2}(\cO_{r})$ when $\F_{q}=\F_{3}$, despite
the fact that in these cases we don't have $[G,G]\supseteq\SL_{2}(\cO_{r})$. 
\end{rem}

\begin{lem}
\label{lem:Factors through trivial on centre}Assume
that either $G_{}=\GL_{2}(\cO_{r})$ or $G_{}=\SL_{2}(\cO_{r})$ and
$p\neq2$. Let $\rho$ be an irreducible character of $G_{}$ that
factors through a character $\bar{\rho}$ of $G_{r-1}$. If $\rho$
is trivial on $Z_{}$, then $\bar{\rho}$ is trivial on $Z_{r-1}$. 
\end{lem}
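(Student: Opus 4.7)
The plan is to reduce the statement to the single observation that the reduction map $\rho_{r-1}\colon G\to G_{r-1}$ restricts to a \emph{surjection} $Z\to Z_{r-1}$. Once this is established, the conclusion is immediate: for any $\bar{z}\in Z_{r-1}$, choose $z\in Z$ with $\rho_{r-1}(z)=\bar{z}$; then
\[
\bar{\rho}(\bar{z})=\bar{\rho}(\rho_{r-1}(z))=\rho(z)=1,
\]
the last equality being the hypothesis that $\rho|_{Z}=\mathbf{1}$.

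To verify the surjectivity I would split into the two cases. When $G=\GL_{2}(\cO_{r})$, both $Z$ and $Z_{r-1}$ consist of scalar matrices, so surjectivity reduces to the standard fact that $\cO_{r}^{\times}\to\cO_{r-1}^{\times}$ is surjective; this is because $1+\mfp^{r-1}\subseteq\cO_{r}^{\times}$, so any preimage in $\cO_{r}$ of a unit in $\cO_{r-1}$ is itself a unit. When $G=\SL_{2}(\cO_{r})$ and $p\neq 2$, the centre of $\SL_{2}(\cO_{i})$ consists of scalar matrices $aI$ with $a^{2}=1$. Since $2\in\cO_{i}^{\times}$, Hensel's lemma (or a direct argument using $(a-1)(a+1)=0$ together with $a\pm 1$ being either a unit or in $\mfp$) shows that the only such $a$ are $\pm 1$. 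Hence $Z=\{\pm I\}=Z_{r-1}$ and the map $\rho_{r-1}\colon Z\to Z_{r-1}$ sends $\pm I$ to $\pm I$, which is clearly surjective (and bijective).

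The content is essentially this identification of the centres; there is no real obstacle once one observes that $p\neq 2$ prevents extra square roots of unity from appearing in either $\cO_{r}$ or $\cO_{r-1}$. It is worth noting that this is precisely where the hypothesis is sharp: for $\SL_{2}(\cO_{r})$ with $p=2$, the centre $Z_{r-1}$ can be strictly larger than the image of $Z$ (as already flagged in the introduction and exemplified by $\SL_{2}(\Z/2^{r})$ for $r\geq 4$), and then one cannot lift an arbitrary element of $Z_{r-1}$ to an element of $Z$ on which $\rho$ is automatically trivial. This explains why the analogous reduction argument for $\SL_{2}(\cO_{r})$ with $p=2$ requires the considerably more delicate treatment postponed to the later sections.
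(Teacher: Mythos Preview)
Your proof is correct and follows essentially the same approach as the paper: reduce to surjectivity of $\rho_{r-1}\colon Z\to Z_{r-1}$, handle $\GL_{2}$ via surjectivity of $\cO_{r}^{\times}\to\cO_{r-1}^{\times}$, and handle $\SL_{2}$ with $p\neq2$ via Hensel's lemma/direct argument on $a^{2}=1$. The only cosmetic difference is that you explicitly identify $Z=Z_{r-1}=\{\pm I\}$, whereas the paper phrases the $\SL_{2}$ case as a lifting statement; these amount to the same thing.
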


\begin{proof}
A sufficient condition for the assertion is that $Z_{}K^{r-1}/K^{r-1}\cong Z_{r-1}$,
that is, that $\rho_{r-1}:Z_{}\rightarrow Z_{r-1}$ is surjective.
If $G=\GL_{2}(\cO_{r})$, $Z_{}\cong\cO_{r}^{\times}$, and it is
well-known that this maps surjectively onto $\cO_{r-1}^{\times}$.
For $G=\SL_{2}(\cO_{r})$ with $p$ odd, Hensel's lemma (or an easy
direct calculation) implies that any $a\in\cO_{i}$ such that $a^{2}=1$
lifts to an $\hat{a}\in\cO_{i+1}$ such that $a^{2}=1$, so also in
this case, $\rho_{r-1}$ maps $Z$ onto $Z_{r-1}$.
\end{proof}
The following lemma reduces our considerations, when $p\neq2$, to
the case where $\rho$ is twist-primitive. By Lemma~\ref{lem:constr_regular_chars_for_GL2_or_SL2}\,(\ref{enu: constr lemma i)})
this means that we will reduce to the case where $\rho$ is regular.
\begin{lem}
\label{lem:reducing to twist-primitive when p odd}Assume
that $r\geq2$, $p\neq2$ and that every irreducible character of
$G_{r-1}$ that is trivial on $Z_{r-1}$ is contained in the conjugation
character of $G_{r-1}$. Let $\rho\in\Irr(G)$ be a character that
is trivial on $Z$ and not twist-primitive. Then $\rho$ is contained
in the conjugation character of $G$. 
\end{lem}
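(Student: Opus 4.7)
The plan is to reduce to the induction hypothesis by showing that $\rho$ itself factors through $G_{r-1}$, and then to transfer a centraliser witness from $G_{r-1}$ back to $G$. Since $\rho$ is not twist-primitive, by definition there is a linear character $\chi$ of $G$ such that $\chi\rho$ factors through $G_{r-1}$; in particular $(\chi\rho)|_{K^{r-1}} = \rho(1)\cdot\mathbf{1}$, which rearranges to
\[
\rho|_{K^{r-1}} \;=\; \rho(1)\,\chi^{-1}|_{K^{r-1}}.
\]
The crux is to argue that $\chi|_{K^{r-1}}$ is already trivial, which will force $\rho$ too to factor through $G_{r-1}$.

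To this end, I would first observe that $\chi|_{K^{r-1}}$ is $G$-invariant under conjugation (trivially, since $\chi$ is one-dimensional), so under the $G$-equivariant bijection $\Irr(K^{r-1}) \leftrightarrow \mfg(\F_q)$ recorded in \eqref{eq: psi_betas in the good cases} it corresponds to a $G$-fixed element of $\mfg(\F_q)$, namely a scalar matrix $aI$. In the $\SL_2$ case the trace-zero condition combined with $p \neq 2$ forces $a = 0$ immediately. In the $\GL_2$ case, I would evaluate the identity above on the subgroup $Z \cap K^{r-1} = \{(1+b\pi^{r-1})I \mid b \in \F_q\}$: triviality of $\rho$ on $Z$ yields $\chi|_{Z\cap K^{r-1}} = \mathbf{1}$, and a direct computation of $\psi_{aI}$ on this subgroup gives $\psi(2ab\pi^{r-1}) = 1$ for all $b \in \F_q$; since $\psi$ is non-trivial on $\mfp^{r-1}$ and $p \neq 2$, this again forces $a = 0$.

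Thus in either case $\chi$ factors through $G_{r-1}$, and so does $\rho = \chi^{-1}(\chi\rho)$. Write $\rho = \bar\rho \circ \rho_{r-1}$ with $\bar\rho \in \Irr(G_{r-1})$. By \cref{lem:Factors through trivial on centre}, $\bar\rho$ is trivial on $Z_{r-1}$, so the hypothesis supplies some $\bar g \in G_{r-1}$ with $\bar\rho|_{C_{G_{r-1}}(\bar g)}$ containing $\mathbf{1}$. Lifting $\bar g$ to any $g \in G$, the containment $\rho_{r-1}(C_G(g)) \subseteq C_{G_{r-1}}(\bar g)$ implies that every $C_{G_{r-1}}(\bar g)$-fixed vector of $\bar\rho$ is a $C_G(g)$-fixed vector of $\rho$, so $\rho|_{C_G(g)}$ contains $\mathbf{1}$, and \cref{lem:in conj rep if trivial on C} completes the argument.

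The main obstacle is showing $a=0$ in the $\GL_2$ case: unlike for $\SL_2$, $G$-invariance of $\chi|_{K^{r-1}}$ alone does not force it to be trivial, and one must combine the triviality of $\rho$ on $Z$, the non-degeneracy of $\psi$ on $\mfp^{r-1}$, and the hypothesis $p \neq 2$ in an essential way --- this is precisely the step that would fail for $p = 2$.
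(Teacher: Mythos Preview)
Your proof is correct and reaches the same conclusion---that $\rho$ itself factors through $G_{r-1}$---but by a different route from the paper's. The paper argues that $\rho$ is trivial on $[G,G]\cap K^{r-1}$, invokes \cref{lem:Commutator kernel containment} (which uses perfectness of $\sl_2(\F_q)$ for $p\neq 2$) to get $[G,G]\cap K^{r-1}\supseteq \KSL^{r-1}$, and then in the $\GL_2$ case shows $Z\,\KSL^{r-1}\supseteq \KGL^{r-1}$ by solving a $2\times 2$ linear system requiring $2$ to be invertible. You instead identify $\chi|_{K^{r-1}}$ directly as $\psi_{aI}$ via the $G$-equivariance of \eqref{eq: psi_betas in the good cases} and kill $a$ either by the trace-zero constraint ($\SL_2$) or by evaluating on $Z\cap K^{r-1}$ ($\GL_2$). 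Your argument is arguably cleaner: it bypasses \cref{lem:Commutator kernel containment} entirely and makes the role of $p\neq 2$ completely transparent (it is exactly the invertibility of $2$ in $\psi(2ab\pi^{r-1})=1\Rightarrow a=0$, or $2a=0\Rightarrow a=0$). The paper's approach, on the other hand, isolates the structural fact $\KSL^{r-1}\subseteq [G,G]$ as a standalone lemma, which may be of independent interest. The endgames coincide: both apply \cref{lem:Factors through trivial on centre} and lift a centraliser witness from $G_{r-1}$ to $G$ (the paper cites \cite[Lemma~4]{tiep2023conjugation} for this last step, which is precisely what you wrote out explicitly).
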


\begin{proof}
As $\rho$ is not twist-primitive, it is trivial on $[G,G]\cap K^{r-1}$
(as shown in the beginning of this section). By Lemma~\ref{lem:Commutator kernel containment},
$[G,G]\cap K^{r-1}\supseteq\KSL^{r-1}$ (since $p\neq2$). Thus, when
$G=\SL_{2}(\cO_{r})$, $\rho$ factors through $G_{r-1}\cong\SL_{2}(\cO_{r})/\KSL^{r-1}$.
Next, asssume that $G=\GL_{2}(\cO_{r})$. As $\rho$ is also assumed
to be trivial on $Z$, it is trivial on $Z_{}\KSL^{r-1}$. But $Z_{}\geq\begin{pmatrix}1+x\pi^{r-1} & 0\\
0 & 1+x\pi^{r-1}
\end{pmatrix}$, so $\rho$ is trivial on the group of matrices of the form
\begin{align*}
 & \left\{ \begin{pmatrix}1+x\pi^{r-1} & 0\\
0 & 1+x\pi^{r-1}
\end{pmatrix}\begin{pmatrix}1+a\pi^{r-1} & b\pi^{r-1}\\
c\pi^{r-1} & 1-a\pi^{r-1}
\end{pmatrix}\right\} \\
 & =\left\{ \begin{pmatrix}1+(x+a)\pi^{r-1} & b\pi^{r-1}\\
c\pi^{r-1} & 1+(x-a)\pi^{r-1}
\end{pmatrix}\right\} .
\end{align*}
For any $y,z\in\cO_{r}$, there exists a solution $x,a\in\cO_{r}$
to the following system of congruences
\begin{align*}
x+a & =y\mod{\mfp},\\
x-a & =z\mod{\mfp},
\end{align*}
namely, $x=(y+z)2^{-1}$, $a=(y-x)2^{-1}$ (using that $p\neq2$,
so $2$ is invertible). Thus $Z\KSL^{r-1}\supseteq\KGL^{r-1}$ (in
fact, we have equality) and $\rho$ factors through $G_{r-1}\cong\GL_{2}(\cO_{r})/\KGL^{r-1}$.

In either case $\rho$ factors through $G_{r-1}$ and since $\rho$
is trivial on $Z$, Lemma~\ref{lem:Factors through trivial on centre}
implies that the character $\bar{\rho}$ that it defines on $G_{r-1}$
is trivial on $Z_{r-1}$. By hypothesis, $\bar{\rho}$ appears in
the conjugation character of $G_{r-1}$ and by \cite[Lemma~4]{tiep2023conjugation}
we conclude that $\rho$ appears in the conjugation character of $G$. 
\end{proof}
\begin{rem}
The above lemma, together with Lemma~\ref{lem:intersection CJ cap C(g) is ZU^1K^l cap C(g)},
are the only places where we need the assumption that $p\neq2$ for
$G=\GL_{2}(\cO_{r})$. The last paragraph of the proof of the above
lemma shows that for $G_{}=\GL_{2}(\cO_{r})$, for any $p$, all the
non-twist primitive characters that are also non-primitive and trivial
on $Z$ are contained in the conjugation character of $G$, assuming
that every irreducible character of $G_{r-1}$ that is trivial on
$Z_{r-1}$ is contained in the conjugation character of $G_{r-1}$.
For $G=\GL_{2}(\cO_{r})$ and $p=2$, the only remaining case is therefore
that of non-twist primitive characters that are primitive (i.e., those
whose orbit mod $\mfp$ contains a non-zero scalar). It seems likely
that the lemma is also true for these characters, but we have not
been able to prove it.
\end{rem}

\subsection{The case when $r$ is even}
\begin{thm}
\label{thm:p odd - r even}Let $r=2l$ and
assume that either $G_{}=\GL_{2}(\cO_{r})$ or $G_{}=\SL_{2}(\cO_{r})$
and $p\neq2$. For any regular $\rho\in\Irr(G_{})$ that is trivial
on $Z$ there exists an element $g_{\hat{\beta}}$ as in Section~\ref{sec:A-family-of} such that $\rho$ restricted to $C_G(g_{\hat{\beta}})$ contains the trivial character. 
\end{thm}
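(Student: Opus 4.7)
The plan is to exploit that when $r=2l$ (so $l=l'$) there is no Heisenberg lift to contend with, and the inducing formula of Lemma~\ref{lem:constr_regular_chars_for_GL2_or_SL2} reduces the problem to checking triviality of a single explicit linear character on an explicit intersection of subgroups.

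\emph{Setup.} First I would apply Lemma~\ref{lem:constr_regular_chars_for_GL2_or_SL2} to write $\rho=\Ind_{CK^{l'}}^{G}\sigma$ with $\sigma\in\Irr(CK^{l'}\mid\psi_{\beta})$ and $C=C_{G}(\hat\beta)$. Lemma~\ref{lem: tau can be taken as 0} (in the $\GL_{2}$ case) or the fact that $\beta\in\sl_{2}(\cO_{l'})$ (for $\SL_{2}$ with $p$ odd) allows me to assume $\beta=\begin{pmatrix}0 & \lambda\\ \lambda^{-1}\Delta & 0\end{pmatrix}$ and to choose a lift $\hat\beta$ of the same shape. Since $Z\subseteq C$, Lemma~\ref{lem: rho trivial on Z iff sigma trivial on Z} together with $\rho|_{Z}=\mathbf{1}$ yields $\sigma|_{Z}=\mathbf{1}$.

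\emph{Finding the right linear character.} Set $N:=ZU^{l'-v}K^{l}$; by (\ref{eq:U^l'-v is in CK^l'}) and $Z\subseteq C$ we have $N\subseteq CK^{l'}$. Since $N/K^{l}$ is abelian and by Proposition~\ref{prop:There exist extns to U^l'-vK^l by formula and every extn is of this form} the character $\psi_{\beta}$ extends linearly to $U^{l'-v}K^{l}$, every irreducible constituent of $\sigma|_{N}$ lying above $\psi_{\beta}$ is linear. Pick any such constituent $\theta$; then $\theta|_{U^{l'-v}K^{l}}=\psi_{\hat\beta'}$ for some lift $\hat\beta'$ of $\beta$ of the same shape as $\hat\beta$, and $\theta|_{Z}=\mathbf{1}$ because $\theta|_{Z}$ is a linear constituent of the multiple-of-trivial character $\sigma|_{Z}$. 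Set $g:=g_{\hat\beta'}$ as in Section~\ref{sec:A-family-of}.

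\emph{Concluding triviality on the centraliser.} By Lemma~\ref{lem:intersection of centralisers in ZUK}\,(1) together with $l=l'$ we have $CK^{l'}\cap C_{G}(g)=N\cap C_{G}(g)$. For any $h\in N\cap C_{G}(g)$, writing $h=zx$ with $z\in Z\subseteq C_{G}(g)$ forces $x=z^{-1}h\in U^{l'-v}K^{l}\cap C_{G}(g)$; Lemma~\ref{lem: psi_hat-beta res to U^l'-vK^l cap C is the trivial char} then gives $\psi_{\hat\beta'}(x)=1$, whence $\theta(h)=\theta(z)\psi_{\hat\beta'}(x)=1$. Consequently
\[
\langle\sigma|_{CK^{l'}\cap C_{G}(g)},\mathbf{1}\rangle=\langle\sigma|_{N\cap C_{G}(g)},\mathbf{1}\rangle\geq\langle\theta|_{N\cap C_{G}(g)},\mathbf{1}\rangle>0,
\]
and Lemma~\ref{lem: is contained in the conj rep iff sigma }\,(\ref{enu: i)}) applied to $\rho=\Ind_{CK^{l'}}^{G}\sigma$ then delivers $\langle\rho|_{C_{G}(g)},\mathbf{1}\rangle>0$.

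The proof is essentially a direct consequence of the preliminaries assembled in Sections~\ref{sec:Basics-of-the-chars}--\ref{sec:A-family-of}; the only point requiring genuine care is the triviality of $\theta|_{Z}$, which ultimately rests on the normalisation $\tau=0$ that makes $\psi_{\hat\beta'}$ kill $Z\cap K^{l}$ and hence allows the linear extension $\theta$ to be trivial on $Z$.
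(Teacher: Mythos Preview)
Your proof is correct and follows essentially the same route as the paper's: reduce via Lemma~\ref{lem: is contained in the conj rep iff sigma }\,(\ref{enu: i)}) to $\sigma$, use Lemma~\ref{lem:intersection of centralisers in ZUK} to identify $CK^{l}\cap C_{G}(g_{\hat\beta'})$ with $ZU^{l-v}K^{l}\cap C_{G}(g_{\hat\beta'})$, pick the lift $\hat\beta'$ via Proposition~\ref{prop:There exist extns to U^l'-vK^l by formula and every extn is of this form}, and finish with Lemma~\ref{lem: psi_hat-beta res to U^l'-vK^l cap C is the trivial char}. The only cosmetic difference is that you make the intermediate linear character $\theta$ on $N=ZU^{l'-v}K^{l}$ explicit, whereas the paper argues directly with $\sigma$ (using that $Z\subseteq\ker\sigma$ forces $\langle\sigma|_{Z\cdot(U^{l-v}K^{l}\cap C_{G}(g))},\mathbf{1}\rangle=\langle\sigma|_{U^{l-v}K^{l}\cap C_{G}(g)},\mathbf{1}\rangle$); your version is arguably cleaner on this point.
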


\begin{proof}
In this case we have $l=l'$. By Lemma~\ref{lem: tau can be taken as 0},
$\rho\in\Irr(G\mid\psi_{\beta})$ for some $\beta=\begin{pmatrix}0 & \lambda\\
\lambda^{-1}\Delta & 0
\end{pmatrix}\in\mfg(\cO_{l}),\lambda\in\cO_{l}^{\times}$. By Lemma~\ref{lem:constr_regular_chars_for_GL2_or_SL2},
$\rho=\Ind_{C_{G}(\hat{\beta})K^{l}}^{G_{}}\sigma$ for some $\sigma\in\Irr(C_{G}(\hat{\beta})K^{l}\mid\psi_{\beta})$
and any choice of lift $\hat{\beta}$. By Lemma~\ref{lem: is contained in the conj rep iff sigma }\,(\ref{enu: i)})
with $H=G_{}$ and $N=C_{G}(\hat{\beta})K^{l}$, it is enough to prove
that $\sigma$ restricted to $C_{G}(\hat{\beta})K^{l}\cap C_{G}(g)$
contains the trivial character, for some $g\in G$. We will prove
that $g$ can be taken to be $g_{\hat{\beta}}$ for some lift $\hat{\beta}$
(that depends on $\sigma$).

For any lift $\hat{\beta}=\begin{pmatrix}0 & \hat{\lambda}\\
\hat{\lambda}^{-1}\hat{\Delta} & 0
\end{pmatrix}$ of $\beta$, Lemma~\ref{lem:intersection of centralisers in ZUK}
says that
\[
C_{G}(\hat{\beta})K^{l}\cap C_{G_{}}(g_{\hat{\beta}})=ZU^{l-v}K^{l}\cap C_{G}(g_{\hat{\beta}}).
\]
It is thus sufficient to show the following.
\begin{claim*}
There exists a $\hat{\beta}=\begin{pmatrix}0 & \hat{\lambda}\\
\hat{\lambda}^{-1}\hat{\Delta} & 0
\end{pmatrix}$ such that $\sigma$ restricted to $ZU^{l-v}K^{l}\cap C_{G}(g_{\hat{\beta}})$
contains the trivial character.
\end{claim*}
Since $\sigma$ lies above $\psi_{\beta}$ and $U^{l-v}K^{l}\subseteq C_{G}(\hat{\beta})K^{l}$
by (\ref{eq:U^l'-v is in CK^l'}), Proposition~\ref{prop:There exist extns to U^l'-vK^l by formula and every extn is of this form}
implies that $\sigma|_{U^{l-v}K^{l}}$ contains an irreducible constituent
of the form $\psi_{\hat{\beta}}$ for some lift $\hat{\beta}=\begin{pmatrix}0 & \hat{\lambda}\\
\hat{\lambda}^{-1}\hat{\Delta} & 0
\end{pmatrix}$ of $\beta$ (using that $\tau=0$ here and that $\psi_{\hat{\beta}}$
is independent of the choice of lift of $\tau$). As $\rho$ is trivial
on $Z$, so is $\sigma$ (by Lemma~\ref{lem: rho trivial on Z iff sigma trivial on Z})
and by Lemma~\ref{lem: psi_hat-beta res to U^l'-vK^l cap C is the trivial char},
$\psi_{\hat{\beta}}$ is trivial on $U^{l-v}K^{l}\cap C_{G}(g_{\hat{\beta}})$.
This proves the claim and finishes the proof.
\end{proof}

\subsection{\protect\label{subsec:The-case-when-r-odd-v-less-l'}The case when
$r$ is odd and $v<l'$}

Throughout this section we assume that $r$ is odd. Let $\beta=\begin{pmatrix}0 & \lambda\\
\lambda^{-1}\Delta & 0
\end{pmatrix}\in\mfg(\cO_{l'}),\lambda\in\cO_{l'}^{\times}$ and let $\hat{\beta}=\begin{pmatrix}0 & \hat{\lambda}\\
\hat{\lambda}^{-1}\hat{\Delta} & 0
\end{pmatrix}\in\mfg(\cO_{r})$ be a lift of $\beta$. For $X\in\M_{2}(\cO_{i})$, $i\geq2$, we
will denote the image of $X$ in $\M_{2}(\F_{q})$ by $\bar{X}$.
It is well-known that the homomorphism $\eta:K^{l'}\rightarrow\M_{2}(\F_{q})$
given by $1+X\pi^{l'}\mapsto\bar{X}$, where $X\in\M_{2}(\cO_{r})$,
induces an isomorphism 
\begin{equation}
K^{l'}/K^{l}\cong\mfg(\F_{q}),\label{eq:K^l'/K^l =00003D g}
\end{equation}
where $\mfg(\F_{q})$ is viewed as a group under addition. As $K^{l'}/K^{l}$
is an elementary abelian $p$-group, this isomorphism can be promoted
to an $\F_{p}$-vector space isomorphism. This isomorphism is straightforward
when $G=\GL_{2}(\cO_{r})$. For $G=\SL_{2}(\cO_{r})$, if $1+\pi^{l'}X\in K^{l'}$
with $X=\begin{pmatrix}a & b\\
c & d
\end{pmatrix}\in\M_{2}(\cO_{r})$, then $1=\det(1+\pi^{l'}X)=1+(a+d)\pi^{l'}+(ad-bc)\pi^{r-1}$, so
$a+d\in\mfp^{l'}$, hence $a+d\equiv0\mod{\mfp}$, so $\bar{X}\in\sl_{2}(\F_{q})$.
We claim that the map $\eta$ is surjective. Indeed, for $\bar{X}=\begin{pmatrix}\bar{a} & \bar{b}\\
\bar{c} & \bar{d}
\end{pmatrix}\in\sl_{2}(\F_{q})$ with $\bar{a}\neq0$ or $\bar{d}\neq0$, Hensel's lemma implies that
there are $a,b,c,d\in\cO_{r}$ such that $a+d\equiv-(ad-bc)\pi^{l'}\mod{\pi^{l}}$
and thus $(a+d)\pi^{l'}\equiv-(ad-bc)\pi^{r-1}\mod{\pi^{r}}$, so
that for $x:=1+\pi^{l'}\begin{pmatrix}a & b\\
c & d
\end{pmatrix}$, we have $\det(x)=1$ and $\eta(x)=\bar{X}$. Moreover, if $\bar{a}=\bar{d}=0$,
then we may take $x:=1+\pi^{l'}\begin{pmatrix}\pi^{l'}bc & b\\
c & 0
\end{pmatrix}$. The kernel of $\eta$ is clearly $K^{l}$, so we have an isomorphism
$K^{l'}/K^{l}\iso\sl_{2}(\F_{q})$. 

For simplicity, and to conform with the notation in \cite{Stasinski-Stevens},
we will write 
\[
C^{i}=C_{G}(\hat{\beta})\cap K^{i},
\]
for $r\geq i\geq1$ (we will actually only use this notation for $i=1$
or $i=l'$). Since $C_{G}(\hat{\beta})$ is abelian and $[C^{1},K^{l'}]\subseteq K^{l}$,
the group $C^{1}K^{l}$ is normal in $C^{1}K^{l'}$. The quotient
\[
V:=\frac{C^{1}K^{l'}}{C^{1}K^{l}}=\frac{K^{l'}(C^{1}K^{l})}{C^{1}K^{l}}\cong\frac{K^{l'}}{K^{l'}\cap C^{1}K^{l}}=\frac{K^{l'}}{C^{l'}K^{l}}
\]
 is a quotient of $\mfg(\F_{q})$, so $V$ is itself an elementary
abelian $p$-group.

Let $\theta$ be an extension of $\psi_{\beta}$ to $C^{1}K^{l}$
(which exists, as $C$ is abelian and stabilises $\psi_{\beta}$).
Define the alternating bilinear form 
\[
h_{\beta}:V\times V\longrightarrow\C^{\times},\qquad h_{\beta}(x(C^{1}K^{l}),y(C^{1}K^{l}))=\theta([x,y])=\psi_{\beta}([x,y]),
\]
where $[x,y]=xyx^{-1}y^{-1}$ and the last equality holds because
the commutator subgroup of $C^{1}K^{l}$ lies in $K^{l}$. 
\begin{lem}
\label{lem:Heisenberg_induction}Assume that either $G=\GL_{2}(\cO_{r})$
or $G=\SL_{2}(\cO_{r})$ and $p\neq2$. For any extension $\theta$
of $\psi_{\beta}$ to $C^{1}K^{l}$, there exists a unique $\eta_{\theta}\in\Irr(C^{1}K^{l'}\mid\theta)$,
that is, $\Ind_{C^{1}K^{l}}^{C^{1}K^{l'}}\theta$ is a multiple of
$\eta_{\theta}$. Every element of $\Irr(C^{1}K^{l'}\mid\psi_{\beta})$
is of the form $\eta_{\theta}$.

Moreover, all of these assertions remain true when $C^{1}K^{l'}$
is replaced by $ZC^{1}K^{l'}$ and $C^{1}K^{l}$ is replaced by $ZC^{1}K^{l}$. 
\end{lem}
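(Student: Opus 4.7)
The plan is to invoke the general Heisenberg lift construction from \cite{Stasinski-Stevens}. The main hypothesis to verify is non-degeneracy of the commutator pairing $h_{\beta}$ on $V$; granted this, existence and uniqueness of $\eta_{\theta}$ follow formally, as does the multiplicity statement (since $\dim\eta_{\theta}=\sqrt{|V|}$, all irreducible constituents of $\Ind_{C^{1}K^{l}}^{C^{1}K^{l'}}\theta$ lie above $\theta$, and the dimension count $|V|=(\sqrt{|V|})^{2}$ forces the induction to be $\sqrt{|V|}\cdot\eta_{\theta}$).

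First I would identify $V$ explicitly: since $K^{l'}\cap C^{1}K^{l}=C^{l'}K^{l}$, the inclusion $K^{l'}\hookrightarrow C^{1}K^{l'}$ induces an isomorphism $V\cong K^{l'}/(C^{l'}K^{l})$, and composing with the standard isomorphism $K^{l'}/K^{l}\cong\mfg(\F_{q})$ yields $V\cong\mfg(\F_{q})/\bar{C}^{l'}$, where $\bar{C}^{l'}$ is the image of $C^{l'}$, spanned by $I$ and $\bar{\hat{\beta}}$. In either case ($\GL_{2}$, or $\SL_{2}$ with $p\neq2$ and $\tau=0$) this makes $\dim_{\F_{q}}V=2$. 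Next, to compute $h_{\beta}$: for $x=1+\pi^{l'}X$ and $y=1+\pi^{l'}Y$ in $K^{l'}$, a direct expansion, using $2l'=r-1$, gives $[x,y]\equiv1+\pi^{r-1}[X,Y]\pmod{\pi^{r}}$, so applying the defining formula for $\psi_{\beta}$ on $K^{l}$ and the cyclic property of trace,
\[
h_{\beta}(\bar{x},\bar{y})=\psi_{\beta}([x,y])=\psi\bigl(\pi^{l'-1}\Tr([\hat{\beta},X]Y)\bigr).
\]
If $\bar{x}$ lies in the radical, then varying $Y$ over $\mfg(\F_{q})$ and using the non-triviality of $\psi$ on $\pi^{r-1}$ together with the non-degeneracy of the trace form on $\mfg(\F_{q})$ (which uses $p\neq2$ when $\mfg=\sl_{2}$) forces $[\bar{\hat{\beta}},X]=0$ in $\mfg(\F_{q})$. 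Regularity of $\hat{\beta}$ (via Hill's theorem) then implies $X\in\bar{C}^{l'}$, so $\bar{x}=0$ in $V$, establishing non-degeneracy.

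The assertion that every $\eta\in\Irr(C^{1}K^{l'}\mid\psi_{\beta})$ is of the form $\eta_{\theta}$ follows immediately from Clifford theory: such an $\eta$ lies above some extension $\theta$ of $\psi_{\beta}$, and uniqueness gives $\eta=\eta_{\theta}$. For the $Z$-version, the key observation is that $Z\subseteq C_{G}(\hat{\beta})$ and $Z\cap K^{1}\subseteq C^{1}$, so $Z\cap C^{1}K^{l'}\subseteq C^{1}K^{l}$, and the inclusion $C^{1}K^{l'}\hookrightarrow ZC^{1}K^{l'}$ induces an isomorphism $V\cong ZC^{1}K^{l'}/ZC^{1}K^{l}$. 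The pairing and the whole argument transfer verbatim. I expect the main obstacle to be the non-degeneracy calculation for $h_{\beta}$, as it requires threading together the commutator expansion in the tower $K^{l'}\supset K^{l}\supset K^{r-1}$, the cyclic trace manipulation, and the regularity of $\hat{\beta}$; the $Z$-reduction and the surjectivity in the last clause are essentially formal.
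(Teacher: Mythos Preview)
Your approach matches the paper's: both invoke the Heisenberg lift machinery from \cite{Stasinski-Stevens} after checking non-degeneracy of $h_\beta$, and both handle the $Z$-version by exhibiting the same isomorphism $ZC^1K^{l'}/ZC^1K^l\cong V$; you go slightly further by writing out the non-degeneracy argument (commutator expansion, trace-form duality, regularity) explicitly where the paper simply cites \cite[Lemmas~4.3--4.5]{Stasinski-Stevens}. Two minor slips worth fixing: the exponent in your displayed formula for $h_\beta$ should be $\pi^{r-1}$ rather than $\pi^{l'-1}$, and in the $\SL_2$ case $\bar{C}^{l'}=\F_q\bar{\hat{\beta}}$ only (since $I\notin\sl_2(\F_q)$ when $p\neq 2$), though your conclusion $\dim_{\F_q}V=2$ is correct in both cases.
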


\begin{proof}
When $G=\GL_{2}(\cO_{r})$ we can deduce this from the more general
results in \cite{Stasinski-Stevens} (see also \cite[Section~4.2]{Alex_smooth_reps_GL2}
for the present special case). Indeed, \cite[p.~1078]{Stasinski-Stevens}
shows that $\theta$ is fixed by $C^{1}K^{l'}$ and \cite[Lemmas~4.3, 4.4 and 4.5 ii)]{Stasinski-Stevens}
show that the form $h_{\beta}$ is non-degenerate. The existence and
uniqueness of $\eta_{\theta}$ then follows from the general result
\cite[Lemma~3.2]{Stasinski-Stevens}. To help translating these results
from \cite{Stasinski-Stevens} into our present situation, take $\mfA=\mfA_{\mathrm{M}}=\M_{2}(\cO_{r})$,
$\mfP=\mfp\mfA$ so that $e=1$ and $U^{i}$ is $K^{i}$ in our present
notation. Then the map $\rho:K^{l'}\rightarrow K^{l'}/K^{l}\iso\mfA/\mfP$
in \cite[Lemma~4.4]{Stasinski-Stevens} is our map $\eta$ above and
$\rho^{-1}(C_{\mfA/\mfP}(\beta+\mfP))$ equals $C^{l'}K^{l}$.

Suppose now that $G=\SL_{2}(\cO_{r})$ and $p\neq2$. The proofs of
the above results in \cite{Stasinski-Stevens} go through\emph{ mutatis
mutandis} for $\mfA=\sl_{2}(\cO_{r})$ and all corresponding groups
changed to their respective versions in $\SL_{2}(\cO_{r})$. The condition
$p\neq2$ is needed for the non-degeneracy of the trace form in \cite[Lemma~2.5]{Stasinski-Stevens}
(when $E=\sl_{2}(\cO_{r})$), which is used in \cite[Lemma~3.1 i)]{Stasinski-Stevens},
which in turn is used in the proof of \cite[Lemma~4.4]{Stasinski-Stevens}.

Now, any $\sigma\in\Irr(C^{1}K^{l'}\mid\psi_{\beta})$ is of the form
$\eta_{\theta}$ because $\sigma|_{C^{1}K^{l'}}$ contains some $\theta\in\Irr(C^{1}K^{l}\mid\psi_{\beta})$
so by the uniqueness of $\eta_{\theta}$ we must have $\sigma=\eta_{\theta}$. 

Moreover, 
\[
\frac{ZC^{1}K^{l'}}{ZC^{1}K^{l}}=\frac{K^{l'}(ZC^{1}K^{l})}{ZC^{1}K^{l}}\cong\frac{K^{l'}}{K^{l'}\cap ZC^{1}K^{l}}=\frac{K^{l'}}{C^{l'}K^{l}}\cong V,
\]
the corresponding bilinear form on $\frac{ZC^{1}K^{l'}}{ZC^{1}K^{l}}$
has the same values as $h_{\beta}$ on $V$ and any extension of $\psi_{\beta}$
to $ZC^{1}K^{l}$ is fixed by $ZC^{1}K^{l'}$ if and only if its restriction
to $C^{1}K^{l}$ is fixed by $C^{1}K^{l'}$. Thus the last assertion
follows from the above.
\end{proof}
\begin{thm}
\label{thm:p odd - r odd - v < l'}Assume
that $r$ is odd and $v<l'$. Assume moreover that either $G=\GL_{2}(\cO_{r})$
or $G=\SL_{2}(\cO_{r})$ and $p\neq2$. For any regular $\rho\in\Irr(G_{})$ that is trivial
on $Z$ there exists an element $g_{\hat{\beta}}$ as in Section~\ref{sec:A-family-of} such that $\rho$ restricted to $C_G(g_{\hat{\beta}})$ contains the trivial character.
\end{thm}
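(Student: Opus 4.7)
The plan is to execute the chain of reductions sketched in the introduction, iterating Lemma~\ref{lem: is contained in the conj rep iff sigma } to descend from $\rho$ on $G$ to a linear character on a small subgroup, whose triviality on the relevant centraliser intersection will follow from Lemma~\ref{lem: psi_hat-beta res to U^l'-vK^l cap C is the trivial char}. Applying Lemma~\ref{lem: tau can be taken as 0} and Lemma~\ref{lem:constr_regular_chars_for_GL2_or_SL2}, I would write $\rho=\Ind_{CK^{l'}}^{G}\sigma$, where $C=C_G(\hat{\beta})$ for a lift $\hat{\beta}=\bigl(\begin{smallmatrix}0 & \hat{\lambda}\\ \hat{\lambda}^{-1}\hat{\Delta} & 0\end{smallmatrix}\bigr)$ of $\beta$, and $\sigma\in\Irr(CK^{l'}\mid\psi_\beta)$. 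By Lemma~\ref{lem: is contained in the conj rep iff sigma }(\ref{enu: i)}) it suffices to produce a $g_{\hat{\beta}}$ such that $\sigma|_{CK^{l'}\cap C_G(g_{\hat{\beta}})}$ contains $\mathbf{1}$. Lemma~\ref{lem:intersection of centralisers in ZUK}(1) rewrites this intersection as $ZU^{l'-v}K^{l'}\cap C_G(g_{\hat{\beta}})$, and a direct matrix computation (using $v<l'$, which makes the $(2,1)$-entry of the relevant element of $C^1$ vanish modulo $\mathfrak{p}^{l'}$) shows $U^{l'-v}\subseteq C^1K^{l'}$, so that $ZU^{l'-v}K^{l'}\subseteq ZC^1K^{l'}\subseteq CK^{l'}$. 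A second application of Lemma~\ref{lem: is contained in the conj rep iff sigma }(\ref{enu: i)}) with $N=ZC^1K^{l'}$ and $H=CK^{l'}$ then reduces the task to exhibiting a constituent $\eta$ of $\sigma|_{ZC^1K^{l'}}$ that is trivial on $ZC^1K^{l'}\cap C_G(g_{\hat{\beta}})$.

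Next, I would invoke the Heisenberg lift of Lemma~\ref{lem:Heisenberg_induction}: any such $\eta$ is of the form $\eta_\theta$ for a unique $\theta\in\Irr(ZC^1K^l\mid\psi_\beta)$, with $\Ind_{ZC^1K^l}^{ZC^1K^{l'}}\theta$ a multiple of $\eta_\theta$. Lemma~\ref{lem: is contained in the conj rep iff sigma }(\ref{enu: ii)}) thus further reduces the problem to showing that the linear character $\theta$ is trivial on $ZC^1K^l\cap C_G(g_{\hat{\beta}})$. Combining Lemma~\ref{lem:intersection of centralisers in ZUK}(2) (where $v<l'$ forces $w=v$) with the analogous containment $U^{l-v}\subseteq C^1K^l$ identifies this intersection with $ZU^{l-v}K^l\cap C_G(g_{\hat{\beta}})$. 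Moreover, since $Z\subseteq C$, Lemma~\ref{lem: rho trivial on Z iff sigma trivial on Z} forces $\sigma|_Z$ to be a multiple of $\mathbf{1}$, so every constituent of $\sigma|_{ZC^1K^{l'}}$, and hence $\eta_\theta$, is trivial on $Z$; consequently so is the linear character $\theta$, and it remains only to prove $\theta|_{U^{l-v}K^l\cap C_G(g_{\hat{\beta}})}=\mathbf{1}$ for a suitable choice of lift.

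For the final step, the argument of Proposition~\ref{prop:There exist extns to U^l'-vK^l by formula and every extn is of this form} applies verbatim with $l'-v$ replaced by $l-v$, showing that every element of $\Irr(U^{l-v}K^l\mid\psi_\beta)$ is of the form $\psi_{\hat{\beta}'}$ for some lift $\hat{\beta}'$ of $\beta$ of the prescribed shape. In particular $\theta|_{U^{l-v}K^l}=\psi_{\hat{\beta}'}$ for some such $\hat{\beta}'$, and taking $g_{\hat{\beta}}=g_{\hat{\beta}'}$, Lemma~\ref{lem: psi_hat-beta res to U^l'-vK^l cap C is the trivial char} directly yields $\psi_{\hat{\beta}'}|_{U^{l-v}K^l\cap C_G(g_{\hat{\beta}'})}=\mathbf{1}$, since $U^{l-v}K^l\subseteq U^{l'-v}K^l$. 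The main subtlety I anticipate is that $g_{\hat{\beta}}$ must be chosen at the very start, while the ``correct'' lift $\hat{\beta}'$ is only revealed at the end; this is non-circular because $CK^{l'}$, $ZC^1K^{l'}$, $ZC^1K^l$, and the Heisenberg lift of Lemma~\ref{lem:Heisenberg_induction} all depend only on $\beta$ and not on the specific chosen lift, so the choice of $\hat{\beta}$ may safely be deferred until $\theta$ is in hand.
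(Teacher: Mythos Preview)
Your proposal is correct and follows essentially the same route as the paper's proof. Two minor points are worth noting.

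First, the ``second application of Lemma~\ref{lem: is contained in the conj rep iff sigma }\,(\ref{enu: i)})'' with $N=ZC^1K^{l'}$ and $H=CK^{l'}$ is not quite the right citation: that lemma would only tell you that $(\Ind_{N}^{H}\eta)|_{H\cap C}$ contains $\mathbf{1}$, not that $\sigma|_{H\cap C}$ does. The reduction you want is more elementary and is exactly what the paper does: from the containments you have already noted, one gets the \emph{equality} $CK^{l'}\cap C_G(g_{\hat\beta})=ZC^1K^{l'}\cap C_G(g_{\hat\beta})$, so if a constituent $\eta$ of $\sigma|_{ZC^1K^{l'}}$ contains $\mathbf{1}$ on this common subgroup, then so does $\sigma$. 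No induction lemma is needed at this step.

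Second, in the final step you propose re-running the argument of Proposition~\ref{prop:There exist extns to U^l'-vK^l by formula and every extn is of this form} with $l'-v$ replaced by $l-v$. This is valid, but unnecessary: the paper simply restricts $\theta$ all the way down to $U^{l'-v}K^l$ (which contains $U^{l-v}K^l$), applies Proposition~\ref{prop:There exist extns to U^l'-vK^l by formula and every extn is of this form} as stated to obtain $\theta|_{U^{l'-v}K^l}=\psi_{\hat\beta}$, and then invokes Lemma~\ref{lem: psi_hat-beta res to U^l'-vK^l cap C is the trivial char} directly. Your observation that the choice of lift can be deferred because all the intermediate groups depend only on $\beta$ is exactly right and matches the paper's reasoning.
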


\begin{proof}
By Lemma~\ref{lem: tau can be taken as 0},
$\rho\in\Irr(G\mid\psi_{\beta})$ for some $\beta=\begin{pmatrix}0 & \lambda\\
\lambda^{-1}\Delta & 0
\end{pmatrix}\in\mfg(\cO_{l}),\lambda\in\cO_{l}^{\times}$. By Lemma~\ref{lem:constr_regular_chars_for_GL2_or_SL2},
$\rho=\Ind_{C_{G}(\hat{\beta})K^{l'}}^{G_{}}\sigma$ for some $\sigma\in\Irr(C_{G}(\hat{\beta})K^{l'}\mid\psi_{\beta})$
and any choice of lift $\hat{\beta}$. By Lemma~\ref{lem: is contained in the conj rep iff sigma }\,(\ref{enu: i)})
with $H=G$ and $N=C_{G}(\hat{\beta})K^{l'}$, it is enough to prove
that $\sigma$ restricted to $C_{G}(\hat{\beta})K^{l'}\cap C_{G}(g)$
contains the trivial character, for some $g\in G$. We will prove
that $g$ can be taken to be $g_{\hat{\beta}}$ for some lift $\hat{\beta}$.

For any lift $\hat{\beta}=\begin{pmatrix}0 & \hat{\lambda}\\
\hat{\lambda}^{-1}\hat{\Delta} & 0
\end{pmatrix}$ of $\beta$, Lemma~\ref{lem:intersection of centralisers in ZUK}
says that
\[
C_{G}(\hat{\beta})K^{l'}\cap C_{G}(g_{\hat{\beta}})=ZU^{l'-v}K^{l'}\cap C_{G}(g_{\hat{\beta}}).
\]
Moreover, since $v<l'$ we have $ZU^{l'-v}K^{l'}\subseteq ZK^{1}$,
so that, using, \ref{eq:U^l'-v is in CK^l'} $ZU^{l'-v}K^{l'}\subseteq C_{G}(\hat{\beta})K^{l'}\cap ZK^{1}\subseteq ZC^{1}K^{l'}$.
Thus
\[
ZC^{1}K^{l'}\cap C_{G}(g_{\hat{\beta}})\subseteq C_{G}(\hat{\beta})K^{l'}\cap C_{G}(g_{\hat{\beta}})=ZU^{l'-v}K^{l'}\cap C_{G}(g_{\hat{\beta}})\subseteq ZC^{1}K^{l'}\cap C_{G}(g_{\hat{\beta}}),
\]
so that equalities holds everywhere. It is thus sufficient to show
the following.
\begin{claim*}
There exists a lift $\hat{\beta}=\begin{pmatrix}0 & \hat{\lambda}\\
\hat{\lambda}^{-1}\hat{\Delta} & 0
\end{pmatrix}$ such that $\sigma$ contains the trivial character when restricted
to $ZC^{1}K^{l'}\cap C_{G}(g_{\hat{\beta}})$.
\end{claim*}
By Lemma~\ref{lem:Heisenberg_induction}, there exists an irreducible
constituent of $\sigma|_{ZC^{1}K^{l'}}$ of the form $\eta_{\theta}$, such that $\Ind_{ZC^{1}K^{l}}^{ZC^{1}K^{l'}}\theta$
is a multiple of $\eta_{\theta}$,
for some extension $\theta\in\Irr(ZC^{1}K^{l}\mid\psi_{\beta})$ of
$\psi_{\beta}$. By Proposition~\ref{prop:There exist extns to U^l'-vK^l by formula and every extn is of this form},
\[
\theta|_{U^{l'-v}K^{l}}=\psi_{\hat{\beta}},
\]
for some $\hat{\beta}=\begin{pmatrix}0 & \hat{\lambda}\\
\hat{\lambda}^{-1}\hat{\Delta} & 0
\end{pmatrix}$, which we fix throughout the rest of the proof. By Lemma~\ref{lem: is contained in the conj rep iff sigma }\,(\ref{enu: ii)})
with $H=ZC^{1}K^{l'}$, $N=ZC^{1}K^{l}$ and $C=C_{G}(g_{\hat{\beta}})$,
the above claim follows from the following claim.
\begin{claim*}
The restriction of $\theta$ to $ZC^{1}K^{l}\cap C_{G}(g_{\hat{\beta}})$
is trivial.
\end{claim*}
Now, since $v<l'$ we have $l-w=l-v\geq1$, so Lemma~\ref{lem:intersection of centralisers in ZUK}
implies that 
\[
ZC^{1}K^{l}\cap C_{G}(g_{\hat{\beta}})=ZU^{l-v}K^{l}\cap C_{G}(g_{\hat{\beta}}).
\]
As $\rho$ is trivial on $Z$, so is $\sigma$ (by Lemma~\ref{lem: rho trivial on Z iff sigma trivial on Z}),
hence $\eta_{\theta}$, and therefore $\theta$, are trivial on $Z$.
The second claim therefore follows from the fact that $\psi_{\hat{\beta}}$
restricted to
\[
U^{l-v}K^{l}\cap C_{G}(g_{\hat{\beta}})
\]
is trivial (Lemma~\ref{lem: psi_hat-beta res to U^l'-vK^l cap C is the trivial char}).
This implies the first claim and finishes the proof.
\end{proof}
\begin{rem}
The condition $v<l'$ is needed in the above proof, since the inclusion
$ZU^{l'-v}K^{l'}\cap C_{G}(g_{\hat{\beta}})\subseteq ZC^{1}K^{l'}\cap C_{G}(g_{\hat{\beta}})$
fails for $v=l'$. An alternative approach would be to use Heisenberg
lifts from $U^{l'-v}K^{l}$ to $U^{l'-v}K^{l'}$, but this again fails
for $v=l'$, as $UK^{l}$ is not normal in $UK^{l'}$. This is why
we need an entirely different approach for the case $v=l'$.
\end{rem}

\subsection{\protect\label{subsec:The-case-when-r-odd-v-bigger-0}The case when
$r$ is odd and $v>0$}

In this section we continue to assume that $r$ is odd. We will deal
with the case when $\Delta\in\mfp$ and $p\neq2$. Even though we
only need the case $v=l'$, everything in this section is proved for
all $v>0$ without any additional difficulty.

Let $\beta=\begin{pmatrix}0 & \lambda\\
\lambda^{-1}\Delta & 0
\end{pmatrix}\in\mfg(\cO_{l'}),\lambda\in\cO_{l'}^{\times}$ and let $\hat{\beta}=\begin{pmatrix}0 & \hat{\lambda}\\
\hat{\lambda}^{-1}\hat{\Delta} & 0
\end{pmatrix}\in\mfg(\cO_{r})$ be a lift of $\beta$. We have an alternating bilinear form on $K^{l'}/K^{l}\cong\mfg(\F_{q})$
given by
\[
h_{\beta}:K^{l'}/K^{l}\times K^{l'}/K^{l}\longrightarrow\C,\qquad f(xK^{l},yK^{l})=\psi_{\beta}([x,y]).
\]
Let
\[
J:=B^{l'}K^{l}
\]
 where $B^{l'}=\begin{pmatrix}1+\mfp^{l'} & \mfp^{l'}\\
0 & 1+\mfp^{l'}
\end{pmatrix}\cap G$. The following lemma is well known.
\begin{lem}
\label{lem:Heisenberg lemma for K^l'/K^l}Assume
that $G=\GL_{2}$ or $G=\SL_{2}$ and $p\neq2$. Assume moreover that
$\Delta\in\mfp$. Then the radical of $h_{\beta}$ is $Z^{l'}U^{l'}K^{l}/K^{l}$
and $J/K^{l}$ is a maximal isotropic subspace for $h_{\beta}$. Thus,
for every $\eta\in\Irr(K^{l'}\mid\psi_{\beta})$, there exists an
extension $\tilde{\psi}_{\beta}\in\Irr(J\mid\psi_{\beta})$ of $\psi_{\beta}$
such such that $\eta=\Ind_{J}^{K^{l'}}\tilde{\psi}_{\beta}$. 
\end{lem}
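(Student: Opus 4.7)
The plan is to transfer the alternating form $h_\beta$ to the Lie algebra side via the isomorphism $K^{l'}/K^l \cong \mfg(\F_q)$ of \eqref{eq:K^l'/K^l =00003D g}, identify the radical via a centralizer computation, check that $J/K^l$ is isotropic by a direct matrix computation, and then invoke the same Heisenberg formalism used in Lemma~\ref{lem:Heisenberg_induction}.

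First, since $r = 2l'+1$, a direct calculation gives $[1 + \pi^{l'}X,\, 1 + \pi^{l'}Y] \equiv 1 + \pi^{r-1}[X, Y] \pmod{\pi^r}$ for any $X, Y \in \mfg(\cO_r)$. Applying $\psi_\beta$ and using the identity $\Tr(\hat\beta[X,Y]) = \Tr([\hat\beta, X]Y)$, together with the fact that $\psi$ is non-trivial on $\mfp^{r-1}$ and that the trace pairing on $\mfg(\F_q)$ is non-degenerate (using $p\neq 2$ in the $\SL_2$ case), one obtains that the radical of $h_\beta$ equals $C_{\mfg(\F_q)}(\bar\beta)$. Since $\Delta \in \mfp$, $\bar\beta = \bigl(\begin{smallmatrix}0 & \bar\lambda \\ 0 & 0\end{smallmatrix}\bigr)$ is regular nilpotent, so $C_{\M_2(\F_q)}(\bar\beta) = \F_q I \oplus \F_q \bar\beta$. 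In the $\GL_2$ case this is exactly the image of $Z^{l'}U^{l'}K^l/K^l$ in $\gl_2(\F_q)$; in the $\SL_2$ case the trace-zero condition kills the $\F_q I$ summand and $Z^{l'} = \{I\}$ (using $p\neq 2$), so the image again matches. This gives the first claim.

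For the second claim, the image of $J/K^l$ in $\mfg(\F_q)$ consists of upper triangular matrices, and for any upper triangular $X$ one has $[\bar\beta, X] = \bigl(\begin{smallmatrix}0 & * \\ 0 & 0\end{smallmatrix}\bigr)$, so $\Tr([\bar\beta, X]Y) = 0$ for any upper triangular $Y$. Hence $h_\beta$ vanishes on $J/K^l \times J/K^l$. A dimension count in the non-degenerate symplectic quotient $K^{l'}/Z^{l'}U^{l'}K^l$---which has $\F_q$-dimension $2$ in both the $\GL_2$ and $\SL_2$ cases---shows $J/Z^{l'}U^{l'}K^l$ is $1$-dimensional, so $J/K^l$ is maximal isotropic.

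For the induction statement, the classical Stone--von Neumann formalism applies in the same way as in the proof of Lemma~\ref{lem:Heisenberg_induction} (cf.\ \cite[Lemma~3.2]{Stasinski-Stevens}): any extension $\chi$ of $\psi_\beta$ to $R := Z^{l'}U^{l'}K^l$ is automatically fixed by all of $K^{l'}$ (since $R/K^l$ lies in the radical of $h_\beta$), and the non-degenerate form on $K^{l'}/R$ then controls the irreducibles of $K^{l'}$ above $\chi$. Each such $\chi$ yields a single $\eta \in \Irr(K^{l'} \mid \chi)$, of dimension $[K^{l'}:J] = q$, realised as $\Ind_J^{K^{l'}} \tilde\psi_\beta$ for any extension $\tilde\psi_\beta$ of $\chi$ (equivalently of $\psi_\beta$) to the maximal isotropic $J$; conversely, every $\eta \in \Irr(K^{l'}\mid\psi_\beta)$ lies above some such $\chi$ and hence arises this way. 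The main obstacle I anticipate is keeping the dimension bookkeeping straight across the $\GL_2$ and $\SL_2$ cases---in the $\SL_2$ case both $\mfg(\F_q)$ and the radical drop by one dimension and $Z^{l'}$ must be checked to be trivial---but once these are in hand, the remaining steps are either routine matrix algebra or standard Heisenberg theory.
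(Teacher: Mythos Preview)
Your proposal is correct and follows essentially the same approach as the paper's proof. The paper cites \cite[Proposition~4.2]{Hill_regular} for the radical computation and the induction statement, while you spell out the underlying Heisenberg argument directly; for isotropy the paper observes $[J,J]\subseteq U^{r-1}$ at the group level, which is equivalent to your Lie-algebra computation that $[\bar\beta,X]$ is strictly upper triangular for upper triangular $X$.
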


\begin{proof}
Note that 
\[
(C_{G}(\hat{\beta})\cap K^{l'})K^{l}=\rho^{-1}(C_{\M_{2}(\F_{q})}(\bar{\beta}))=Z^{l'}U^{l'}K^{l}.
\]
Thus the assertion about the radical follows from \cite[Proposition~4.2(1)]{Hill_regular}
for $\GL_{2}$ and we observe that the same argument goes through
for $\SL_{2}$ when $p\neq2$, as the trace form is non-degenerate
in this case.

The assertion about $J/K^{l}$ follows from the proof of \cite[Lemma~4.5]{Hill_regular},
or by a direct computation: It is an isotropic subspace because for
any $x,y\in J$, $[x,y]\in U^{r-1}\subseteq U^{l'}$ and it is maximal
because 
\[
J/K^{l}\cong\begin{cases}
\left(\begin{smallmatrix}* & *\\
0 & *
\end{smallmatrix}\right) & \text{when }G=\GL_{2},\\
\left(\begin{smallmatrix}a & *\\
0 & -a
\end{smallmatrix}\right),\ a\in\F_{q} & \text{when }G=\SL_{2}
\end{cases}
\]
and thus in either case $J/K^{l}$ has codimension $1$ in $\mfg(\F_{q})$.

The last assertion follows from \cite[Proposition~4.2]{Hill_regular}
(the same proof works for $G=\SL_{2}$).
\end{proof}
\begin{lem}
\label{lem:every char of U^l'K^l above psi_beta is =00005Cpsi_hat-beta}The
function $\psi_{\hat{\beta}}:U^{l'}K^{l}\rightarrow\C^{\times}$ given
by
\[
\psi_{\hat{\beta}}(x)=\psi(\Tr(\hat{\beta}(x-1))),\qquad\text{for }x\in U^{l'}K^{l},
\]
is a linear character. If $\hat{\beta}'=\begin{pmatrix}0 & \hat{\lambda}'\\
(\hat{\lambda}')^{-1}\hat{\Delta}' & 0
\end{pmatrix}\in\M_{2}(\cO_{r})$ is another lift of $\beta$, then $\psi_{\hat{\beta}}=\psi_{\hat{\beta}'}$
if and only if $\hat{\lambda}^{-1}\hat{\Delta}\equiv(\hat{\lambda}')^{-1}\hat{\Delta}'\mod{\mfp}^{l}$.
Moreover, every element of $\Irr(U^{l'}K^{l}\mid\psi_{\beta})$ is
of the form $\psi_{\hat{\beta}}$.
\end{lem}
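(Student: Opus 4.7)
The plan is to deduce all three assertions from Proposition~\ref{prop:There exist extns to U^l'-vK^l by formula and every extn is of this form} combined with a cardinality count. The key observation is that the running hypothesis $\Delta\in\mfp$ forces $v=\val(\Delta)\geq 1$, so $U^{l'}\subseteq U^{l'-v}$ and therefore $U^{l'}K^{l}\subseteq U^{l'-v}K^{l}$. Consequently, the linear character on $U^{l'-v}K^{l}$ provided by Proposition~\ref{prop:There exist extns to U^l'-vK^l by formula and every extn is of this form} restricts to a linear character on the subgroup $U^{l'}K^{l}$ given by the same formula, settling the first assertion.

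For the equality criterion I would compute $\psi_{\hat{\beta}}$ directly on a general element $\bigl(\begin{smallmatrix}1+a\pi^{l} & b\pi^{l'}\\ c\pi^{l} & 1+d\pi^{l}\end{smallmatrix}\bigr)$ of $U^{l'}K^{l}$. Since $\hat{\tau}=0$, the formula collapses to $\psi(\hat{\lambda}c\pi^{l}+b\hat{\lambda}^{-1}\hat{\Delta}\pi^{l'})$. The summand $\hat{\lambda}c\pi^{l}$ depends only on $\hat{\lambda}$ modulo $\mfp^{l'}$ and hence coincides for any two lifts of $\lambda\in\cO_{l'}^{\times}$. The equality $\psi_{\hat{\beta}}=\psi_{\hat{\beta}'}$ therefore reduces to $\psi\bigl(b\pi^{l'}(\hat{\lambda}^{-1}\hat{\Delta}-(\hat{\lambda}')^{-1}\hat{\Delta}')\bigr)=1$ for all $b\in\cO_{r}$, which, since $\psi$ is non-trivial exactly on $\mfp^{r-1}$, is equivalent to $\hat{\lambda}^{-1}\hat{\Delta}-(\hat{\lambda}')^{-1}\hat{\Delta}'\in\mfp^{r-l'}=\mfp^{l}$.

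For the exhaustion claim I would compare cardinalities. Since $K^{l}$ is normal in $U^{l'}K^{l}$ with abelian quotient $U^{l'}K^{l}/K^{l}\cong U^{l'}/U^{l}$ of order $q^{l-l'}=q$, the character $\psi_{\beta}$ has exactly $q$ extensions to $U^{l'}K^{l}$ (an extension existing, for example, by taking any $\psi_{\hat{\beta}}$). On the other hand, by the equality criterion just established, the distinct $\psi_{\hat{\beta}}$ are parameterised by residues of $\hat{\lambda}^{-1}\hat{\Delta}$ modulo $\mfp^{l}$; as $(\hat{\lambda},\hat{\Delta})$ varies over lifts of $(\lambda,\Delta)$, this residue sweeps out a full coset of $\mfp^{l'}/\mfp^{l}$, again of cardinality $q$. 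The two counts match, so every element of $\Irr(U^{l'}K^{l}\mid\psi_{\beta})$ is of the form $\psi_{\hat{\beta}}$.

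I do not expect any serious obstacle; the only point of care is tracking how the modulus refines from $\mfp^{l+v}$ on the larger group $U^{l'-v}K^{l}$ appearing in Proposition~\ref{prop:There exist extns to U^l'-vK^l by formula and every extn is of this form} down to $\mfp^{l}$ on the smaller group $U^{l'}K^{l}$, and matching the two cardinalities correctly.
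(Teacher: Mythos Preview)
Your argument is correct and follows essentially the same route as the paper: the equality criterion is obtained by the identical direct computation on a generic element of $U^{l'}K^{l}$, and the exhaustion is obtained by the same cardinality count ($|U^{l'}/U^{l}|=q$ versus $q$ lifts of $\lambda^{-1}\Delta$ to $\cO_{l}$). The only difference is that for the first assertion the paper redoes the matrix computation to verify the homomorphism property directly, whereas you deduce it by restricting the linear character on $U^{l'-v}K^{l}$ from Proposition~\ref{prop:There exist extns to U^l'-vK^l by formula and every extn is of this form}; this is a legitimate economy (indeed $v\geq0$ already gives $U^{l'}\subseteq U^{l'-v}$, so you need not even invoke $\Delta\in\mfp$ for the inclusion).
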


\begin{proof}
Any element of $U^{l'}K^{l}$ has the form 
\[
\begin{pmatrix}1+a\pi^{l} & b\pi^{l'}\\
c\pi^{l} & 1+d\pi^{l}
\end{pmatrix}
\]
for some $a,b,c,d\in\cO_{r}$ and 
\[
\psi_{\hat{\beta}}\left(\begin{pmatrix}1+a\pi^{l} & b\pi^{l'}\\
c\pi^{l} & 1+d\pi^{l}
\end{pmatrix}\right)=\psi\left(\Tr\left(\hat{\beta}\begin{pmatrix}a\pi^{l} & b\pi^{l'}\\
c\pi^{l} & d\pi^{l}
\end{pmatrix}\right)\right)=\psi(\hat{\lambda}c\pi^{l}+b\hat{\lambda}^{-1}\hat{\Delta}\pi^{l'}).
\]
Note that $\hat{\lambda}c\pi^{l}$ is independent of the choice of
lift $\hat{\lambda}$ of $\lambda$ and therefore $\psi_{\hat{\beta}}$,
for fixed $\beta$, only depends on the image of $\hat{\lambda}^{-1}\hat{\Delta}$
mod $\mfp^{l}$. 

We first show that $\psi_{\hat{\beta}}$ is a linear character of
$U^{l'}K^{l}$. For any $a',b',c',d'\in\cO_{r}$,
\begin{align*}
 & \psi_{\hat{\beta}}\left(\begin{pmatrix}1+a\pi^{l} & b\pi^{l'}\\
c\pi^{l} & 1+d\pi^{l}
\end{pmatrix}\begin{pmatrix}1+a'\pi^{l} & b'\pi^{l'}\\
c'\pi^{l} & 1+d'\pi^{l}
\end{pmatrix}\right)\\
 & =\psi_{\hat{\beta}}\left(\begin{pmatrix}\cdots & (b+b')\pi^{l'}\\
(c+c')\pi^{l} & \cdots
\end{pmatrix}\right)\\
 & =\psi(\hat{\lambda}(c+c')\pi^{l}+\hat{\lambda}^{-1}\hat{\Delta}(b+b')\pi^{l'})\\
 & =\psi(\hat{\lambda}c\pi^{l}+\hat{\lambda}^{-1}\hat{\Delta}b\pi^{l'})\psi(\hat{\lambda}c'\pi^{l}+\hat{\lambda}^{-1}\hat{\Delta}b'\pi^{l'}).
\end{align*}
Thus $\psi_{\hat{\beta}}$ is a homomorphism.

We have already seen that $\psi_{\hat{\beta}}$ only depends on $\hat{\lambda}^{-1}\hat{\Delta}$
mod $\mfp^{l}$. Conversely, if $\psi_{\hat{\beta}}=\psi_{\hat{\beta}'}$,
then
\[
\psi(\hat{\lambda}c\pi^{l}+\hat{\lambda}^{-1}\hat{\Delta}b\pi^{l'})=\psi(\hat{\lambda}'c\pi^{l}+(\hat{\lambda}')^{-1}\hat{\Delta}'b\pi^{l'}),
\]
for all $b,c\in\cO_{r}$ ($\hat{\lambda}c\pi^{l}=\hat{\lambda}'c\pi^{l}$
in $\cO_{r}$), so $\psi(b\pi^{l'}(\hat{\lambda}^{-1}\hat{\Delta}-(\hat{\lambda}')^{-1}\hat{\Delta}'))=1$
for all $b$; hence $\pi^{l'}(\hat{\lambda}^{-1}\hat{\Delta}-(\hat{\lambda}')^{-1}\hat{\Delta}')=0$
(as $\psi$ is non-trivial on $\mfp^{r-1}$), that is, 
\[
\hat{\lambda}^{-1}\hat{\Delta}-(\hat{\lambda}')^{-1}\hat{\Delta}'\in\mfp^{l}.
\]
We now show that every element of $\Irr(U^{l'}K^{l}\mid\psi_{\beta})$
is of the form $\psi_{\hat{\beta}}$. The number of elements of $\Irr(U^{l'}K^{l}\mid\psi_{\beta})$
is $|U^{l'}K^{l}/K^{l}|=|U^{l'}/U^{l}|=q$. But we have shown that
the number of distinct characters $\psi_{\hat{\beta}}$ is equal to
the number of distinct lifts of $\lambda^{-1}\Delta\in\cO_{l'}$ to
$\cO_{l}$, which also equals $q$. Thus the $\psi_{\hat{\beta}}$
exhaust $\Irr(U^{l'}K^{l}\mid\psi_{\beta})$.
\end{proof}
\begin{lem}
\label{lem:psi_beta on the radical extends to tilde-psi_beta on J}Assume
that $\Delta\in\mfp$. The function 
\[
\tilde{\psi}_{\hat{\beta}}(x)=\psi(\Tr(\hat{\beta}(x-1))),\qquad\text{for }x\in J,
\]
 is a linear character of $J$ and thus every element of $\Irr(U^{l'}K^{l}\mid\psi_{\beta})$
has an extension to $J$ of the form $\tilde{\psi}_{\hat{\beta}}$.
Moreover, $\tilde{\psi}_{\hat{\beta}}$ is $C_{G}(\hat{\beta})$-stable.
\end{lem}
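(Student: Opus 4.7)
The plan is to verify the three claims---homomorphism, extension, and $C_G(\hat{\beta})$-stability---by direct matrix calculation, leveraging the arithmetic identity $l+l' = r$ (so $\pi^{l+l'} = 0$ in $\cO_{r}$) forced by $r$ being odd. First I would fix an explicit description of $J = B^{l'}K^{l}$: since $l = l'+1$, hence $\mfp^{l} \subseteq \mfp^{l'}$, a routine multiplication of a general element of $B^{l'}$ by one of $K^{l}$ shows that every $x \in J$ has the shape $x = I + X$ with
\[
X = \begin{pmatrix}\alpha\pi^{l'} & \beta\pi^{l'}\\ \gamma\pi^{l} & \delta\pi^{l'}\end{pmatrix},\qquad \alpha,\beta,\gamma,\delta \in \cO_{r}.
\]

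For the homomorphism property, write $xy - 1 = X + Y + XY$, so that $\tilde{\psi}_{\hat{\beta}}(xy) = \tilde{\psi}_{\hat{\beta}}(x)\tilde{\psi}_{\hat{\beta}}(y)\,\psi(\Tr(\hat{\beta}XY))$. It therefore suffices to show $\Tr(\hat{\beta}XY) = 0$ in $\cO_{r}$ for all $x,y \in J$. Computing the diagonal entries of $\hat{\beta}XY$ explicitly, the $(1,1)$-entry equals $\hat{\lambda}(\gamma\alpha' + \delta\gamma')\pi^{l+l'}$, which vanishes because $\pi^{l+l'} = \pi^{r} = 0$, while the $(2,2)$-entry equals $\hat{\lambda}^{-1}\hat{\Delta}(\alpha\beta' + \beta\delta')\pi^{2l'}$, which lies in $\hat{\Delta}\cdot \mfp^{r-1}$ and vanishes because $\Delta \in \mfp$ forces $\hat{\Delta} \in \mfp$, giving $\mfp\cdot\mfp^{r-1} = \mfp^{r} = 0$. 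This is the computational heart of the argument, and it uses the hypothesis $\Delta \in \mfp$ in an essential way; without it the $(2,2)$-term would survive.

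For the extension property, the inclusion $U^{l'} \subseteq B^{l'}$ gives $U^{l'}K^{l} \subseteq J$, and the defining formula for $\tilde{\psi}_{\hat{\beta}}$ on $U^{l'}K^{l}$ is literally the formula defining $\psi_{\hat{\beta}}$ in Lemma~\ref{lem:every char of U^l'K^l above psi_beta is =00005Cpsi_hat-beta}; combined with the classification in that lemma, every element of $\Irr(U^{l'}K^{l} \mid \psi_{\beta})$ extends in the asserted form.

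Finally, for $C_G(\hat{\beta})$-stability I would first check that $C_G(\hat{\beta})$ actually normalises $J$, so that the statement is a genuine character-theoretic stability. Writing $c = aI + b\hat{\beta}$, its lower-left entry $b\hat{\lambda}^{-1}\hat{\Delta}$ lies in $\mfp$ (using $\hat{\Delta}\in\mfp$), so both $c$ and $c^{-1}$ are upper triangular modulo $\mfp$; hence for $u = I + \pi^{l'}M \in B^{l'}$ with $M$ upper triangular, the conjugate $cMc^{-1}$ is upper triangular mod $\mfp$, its $(2,1)$-entry lies in $\mfp$, and $\pi^{l'}(cMc^{-1})_{21} \in \mfp^{l}$, so $cuc^{-1} \in B^{l'}K^{l} = J$. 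The stability is then an immediate consequence of the cyclic invariance of the trace together with $c^{-1}\hat{\beta}c = \hat{\beta}$:
\[
\tilde{\psi}_{\hat{\beta}}(cxc^{-1}) = \psi(\Tr(\hat{\beta}c(x-1)c^{-1})) = \psi(\Tr(c^{-1}\hat{\beta}c(x-1))) = \tilde{\psi}_{\hat{\beta}}(x).
\]
The main obstacle is the $\mfp$-valuation bookkeeping in the homomorphism step and ensuring that $\Delta \in \mfp$ is used precisely where needed; everything else is straightforward matrix manipulation.
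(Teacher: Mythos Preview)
Your proof is correct and follows essentially the same approach as the paper: both verify the homomorphism property by a direct matrix computation exploiting $l+l'=r$ and $\hat{\Delta}\in\mfp$ (your formulation via $\Tr(\hat{\beta}XY)=0$ is a slightly cleaner packaging of the same calculation), and both deduce stability from trace-cyclicity. The only organisational difference is that you fold in the check that $C_G(\hat{\beta})$ normalises $J$, whereas the paper defers this to the next lemma (Lemma~\ref{lem:J normal and tilde-psi_beta extends}) and argues it there via the containment $C_G(\hat{\beta})\subseteq BK^1$.
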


\begin{proof}
Any element of $J$ has the form
\[
\begin{pmatrix}1+a\pi^{l'} & b\pi^{l'}\\
c\pi^{l} & 1+d\pi^{l'}
\end{pmatrix}
\]
for some $a,b,c,d\in\cO_{r}$ and
\[
\tilde{\psi}_{\hat{\beta}}\left(\begin{pmatrix}1+a\pi^{l'} & b\pi^{l'}\\
c\pi^{l} & 1+d\pi^{l'}
\end{pmatrix}\right)=\psi\left(\Tr\left(\hat{\beta}\begin{pmatrix}a\pi^{l'} & b\pi^{l'}\\
c\pi^{l} & d\pi^{l'}
\end{pmatrix}\right)\right)=\psi(\hat{\lambda}c\pi^{l}+\hat{\lambda}^{-1}\hat{\Delta}b\pi^{l'}).
\]
Thus, for any $a',b',c',d'\in\cO_{r}$,
\begin{align*}
 & \tilde{\psi}_{\beta'}\left(\begin{pmatrix}1+a\pi^{l'} & b\pi^{l'}\\
c\pi^{l} & 1+d\pi^{l'}
\end{pmatrix}\begin{pmatrix}1+a'\pi^{l'} & b'\pi^{l'}\\
c'\pi^{l} & 1+d'\pi^{l'}
\end{pmatrix}\right)\\
 & =\tilde{\psi}_{\beta'}\left(\begin{pmatrix}\cdots & ((b+b')+(ab'+bd')\pi^{l'})\pi^{l'}\\
(c+c')\pi^{l} & \cdots
\end{pmatrix}\right)\\
 & =\psi(\hat{\lambda}(c+c')\pi^{l}+\hat{\lambda}^{-1}\hat{\Delta}((b+b')\pi^{l'}+(ab'+bd')\pi^{r-1}))\\
 & =\psi(\hat{\lambda}(c+c')\pi^{l}+\hat{\lambda}^{-1}\hat{\Delta}(b+b')\pi^{l'})\\
 & \quad(\text{since }\text{\ensuremath{\hat{\Delta}\in\mfp} and }\psi(\mfp^{r})=1)\\
 & =\psi(\hat{\lambda}c\pi^{l}+\hat{\lambda}^{-1}\hat{\Delta}b\pi^{l'})\psi(\hat{\lambda}c'\pi^{l}+\hat{\lambda}^{-1}\hat{\Delta}b'\pi^{l'}).
\end{align*}
Thus $\tilde{\psi}_{\hat{\beta}}$ is a homomorphism and it is clear
by definition that it is an extension of $\psi_{\hat{\beta}}\in\Irr(U^{l'}K^{l})$. 

Finally, $\tilde{\psi}_{\hat{\beta}}$ is $C_{G}(\hat{\beta})$-stable
because if $c\in C_{G}(\hat{\beta})$ and $x\in J$, then
\begin{align*}
\tilde{\psi}_{\hat{\beta}}(c^{-1}xc) & =\psi(\Tr(\hat{\beta}(c^{-1}xc-1)))=\psi(\Tr(c^{-1}\hat{\beta}(x-1)c)))\\
 & =\psi(\Tr(\hat{\beta}(x-1)))=\tilde{\psi}_{\hat{\beta}}(x).
\end{align*}

\end{proof}
\begin{lem}
\label{lem:J normal and tilde-psi_beta extends}Assume
that $\Delta\in\mfp$. The group $J$ is normalised by $C_{G}(\hat{\beta})$
and thus $\tilde{\psi}_{\hat{\beta}}$ extends to $C_{G}(\hat{\beta})J$.
\end{lem}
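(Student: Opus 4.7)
The plan is to establish the statement in two stages: first, the normalisation of $J$ by $C_G(\hat{\beta})$, which ensures $C_G(\hat{\beta})J$ is a subgroup; second, the extension of $\tilde{\psi}_{\hat{\beta}}$ to this subgroup. The second stage is essentially a standard Clifford-theoretic construction exploiting the $C_G(\hat{\beta})$-stability established in Lemma~\ref{lem:psi_beta on the radical extends to tilde-psi_beta on J} together with the fact that $C_G(\hat{\beta})$ is abelian (as $\hat{\beta}$ is regular), so the substantive content is the normalisation claim.

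For the normalisation, my plan is to prove the stronger statement that $BK^{1}$ normalises $J$, where $B$ is the upper triangular Borel subgroup of $G$, and then observe that the hypothesis $\Delta\in\mfp$ forces $C_G(\hat{\beta})\subseteq BK^{1}$. The latter is immediate from the description of $C_G(\hat{\beta})$ recalled before~\ref{eq:U^l'-v is in CK^l'}: with $\tau=0$ and $\hat{\Delta}\in\mfp$, every element of $C_G(\hat{\beta})$ has its lower-left entry in $\mfp$, hence is upper triangular modulo $\mfp$. For the stronger statement, $B$ visibly normalises both $B^{l'}=B\cap K^{l'}$ and the normal subgroup $K^{l}$, and hence $J=B^{l'}K^{l}$. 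For $K^{1}$, the commutator relation $[K^{1},K^{l'}]\subseteq K^{l'+1}=K^{l}$ (using $l'+1=l$, which holds as $r$ is odd) shows that conjugation by $K^{1}$ acts trivially on $K^{l'}/K^{l}$, and in particular preserves the subgroup $J/K^{l}$; combined with $K^{1}$ normalising $K^{l}$, this yields that $K^{1}$ normalises $J$. Since $K^{1}\trianglelefteq G$, the product $BK^{1}$ is a subgroup.

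For the extension, with $J\trianglelefteq C_G(\hat{\beta})J$ and $\tilde{\psi}_{\hat{\beta}}$ a $C_G(\hat{\beta})$-stable linear character of $J$, I would first extend $\tilde{\psi}_{\hat{\beta}}|_{C_G(\hat{\beta})\cap J}$ to a linear character $\chi$ of $C_G(\hat{\beta})$ (always possible since $C_G(\hat{\beta})$ is abelian and every character of a subgroup of a finite abelian group extends to the whole group) and then set $\Psi(cj):=\chi(c)\tilde{\psi}_{\hat{\beta}}(j)$ for $c\in C_G(\hat{\beta})$, $j\in J$. Well-definedness is a direct consequence of the agreement of $\chi$ and $\tilde{\psi}_{\hat{\beta}}$ on $C_G(\hat{\beta})\cap J$, while the homomorphism property follows by rewriting $(cj)(c'j')=(cc')\bigl((c')^{-1}jc'\bigr)j'$ and invoking the $C_G(\hat{\beta})$-stability of $\tilde{\psi}_{\hat{\beta}}$. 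The main obstacle I anticipate is pinning down the containment $C_G(\hat{\beta})\subseteq BK^{1}$, which is precisely where the hypothesis $\Delta\in\mfp$ is essential; without it, conjugation by $C_G(\hat{\beta})$ can mix the strictly lower strand of $K^{l'}/K^{l}$ into $J/K^{l}$, breaking the preservation of $J$.
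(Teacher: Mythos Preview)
Your proposal is correct and follows essentially the same route as the paper's proof: reduce to showing $BK^{1}$ normalises $J$ via the commutator relation $[K^{1},K^{l'}]\subseteq K^{l}$ and the inclusion $C_G(\hat{\beta})\subseteq BK^{1}$ coming from $\hat{\Delta}\in\mfp$, then invoke the abelianness of $C_G(\hat{\beta})$ together with the $C_G(\hat{\beta})$-stability of $\tilde{\psi}_{\hat{\beta}}$ for the extension. The paper leaves the extension step implicit (``the conclusion follows''), whereas you spell out the standard splitting construction $\Psi(cj)=\chi(c)\tilde{\psi}_{\hat{\beta}}(j)$; this is a welcome elaboration but not a genuinely different approach.
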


\begin{proof}
As $\Delta\in\mfp$, we have $C_{G}(\hat{\beta})\subseteq BK^{1}$,
where $B$ is the upper triangular subgroup of $G$. We show that
$BK^{1}$ normalises $J$. Indeed, $B$ normalises $J=B^{l'}K^{l}$,
as $B$ contains $B^{l'}$ and $K^{l}$ is normal in $G$; moreover,
$K^{1}$ normalises $J$ since $[K^{1},B^{l'}]\subseteq K^{l}$. Since
$\hat{\beta}$ is regular in $\M_{2}(\cO_{r})$, $C_{G}(\hat{\beta})$
is abelian and the conclusion follows.
\end{proof}
\begin{prop}
\label{prop:sigma induced from CJ}Assume that $G=\GL_{2}$
or $G=\SL_{2}$ and $p\neq2$. Assume moreover that $\Delta\in\mfp$.
Write $S$ for $\Stab_{G}(\psi_{\beta})$. For every $\sigma\in\Irr(S\mid\psi_{\beta})$
there exists a lift $\hat{\beta}=\begin{pmatrix}0 & \hat{\lambda}\\
\hat{\lambda}^{-1}\hat{\Delta} & 0
\end{pmatrix}\in\mfg(\cO_{r})$ and an extension $\omega$ of $\tilde{\psi}_{\hat{\beta}}$ to $CJ$,
where $C:=C_{G}(\hat{\beta})$, such that
\[
\sigma=\Ind_{CJ}^{S}\omega.
\]
\end{prop}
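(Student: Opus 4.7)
The plan is to apply Clifford theory iteratively to the normal series
$K^{l} \triangleleft U^{l'}K^{l} \triangleleft J \triangleleft K^{l'} \triangleleft S$,
combining it with Lemmas~\ref{lem:Heisenberg lemma for K^l'/K^l}, \ref{lem:every char of U^l'K^l above psi_beta is =00005Cpsi_hat-beta} and \ref{lem:psi_beta on the radical extends to tilde-psi_beta on J}. The first step is to pin down the lift $\hat{\beta}$: by Lemma~\ref{lem:every char of U^l'K^l above psi_beta is =00005Cpsi_hat-beta}, every irreducible constituent of $\sigma|_{U^{l'}K^l}$ above $\psi_{\beta}$ has the form $\psi_{\hat{\beta}'}$ for some lift $\hat{\beta}'$. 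A direct commutator calculation, using $\hat{\Delta} \in \mfp$ so that the only potentially non-vanishing term $\pi^{l'}[Y,\hat{\beta}']$ contributes a factor $\pi^{r-1}\hat{\Delta} \subseteq \mfp^r = 0$, shows that each $\psi_{\hat{\beta}'}$ is $K^{l'}$-stable; it is also stabilised by $C_G(\hat{\beta}')$ by the defining formula. Hence $\sigma|_{U^{l'}K^l}$ is a multiple of a single $\psi_{\hat{\beta}}$, which determines $\hat{\beta}$; set $C = C_G(\hat{\beta})$.

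Next I would identify the unique $\eta \in \Irr(K^{l'}\mid\psi_{\beta})$ appearing in $\sigma|_{K^{l'}}$. Uniqueness follows from the same type of argument: $C$ acts trivially on the radical of $h_{\beta}$, which coincides with $(C \cap K^{l'})K^l/K^l$ by the proof of Lemma~\ref{lem:Heisenberg lemma for K^l'/K^l}, so $C$, and hence $S$, fixes every character of $K^{l'}$ above $\psi_{\beta}$. I claim $\eta = \Ind_J^{K^{l'}}\tilde{\psi}_{\hat{\beta}}$. By Lemma~\ref{lem:Heisenberg lemma for K^l'/K^l}, $\eta = \Ind_J^{K^{l'}}\tilde{\psi}$ for some extension $\tilde{\psi}$ of $\psi_{\beta}$ to $J$, and the underlying Heisenberg (Stone--von Neumann with radical) theory implies that two extensions of $\psi_{\beta}$ to $J$ that agree on the radical $Z^{l'}U^{l'}K^l$ are $K^{l'}$-conjugate and thus induce to the same $\eta$. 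The character $\tilde{\psi}_{\hat{\beta}}$ of Lemma~\ref{lem:psi_beta on the radical extends to tilde-psi_beta on J} equals $\psi_{\hat{\beta}}$ on $U^{l'}K^l$ and is trivial on $Z^{l'}$ (since $\Tr(\hat{\beta}) = 0$), matching $\tilde{\psi}|_{Z^{l'}U^{l'}K^l}$ in the intended applications where $\sigma$ is trivial on $Z^{l'}$. Consequently $\tilde{\psi}_{\hat{\beta}}$ occurs in $\eta|_J$, and thus in $\sigma|_J$.

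Finally I would compute $\Stab_S(\tilde{\psi}_{\hat{\beta}}) = CJ$. $C$-stability is Lemma~\ref{lem:psi_beta on the radical extends to tilde-psi_beta on J}, and an element $k \in K^{l'}$ stabilises $\tilde{\psi}_{\hat{\beta}}$ iff $\psi_{\beta}([k,j]) = h_{\beta}(k,j) = 1$ for all $j \in J$, which by maximal isotropy of $J/K^l$ (Lemma~\ref{lem:Heisenberg lemma for K^l'/K^l}) forces $k \in J$. Writing any element of $\Stab_S(\tilde{\psi}_{\hat{\beta}})$ as $ck$ with $c \in C$ and $k \in K^{l'}$, the $C$-stability of $\tilde{\psi}_{\hat{\beta}}$ reduces the stabiliser condition to $k \in J$, whence $\Stab_S(\tilde{\psi}_{\hat{\beta}}) = CJ$. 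Applying Clifford theory to $J \triangleleft S$ (normality from Lemma~\ref{lem:J normal and tilde-psi_beta extends} together with $K^{l'}$ normalising $J$) then gives $\sigma = \Ind_{CJ}^S \omega$ for some $\omega \in \Irr(CJ \mid \tilde{\psi}_{\hat{\beta}})$; since $\tilde{\psi}_{\hat{\beta}}$ extends to $CJ$ by Lemma~\ref{lem:J normal and tilde-psi_beta extends} and $CJ/J \cong C/(C \cap J)$ is abelian, every such $\omega$ is a linear extension of $\tilde{\psi}_{\hat{\beta}}$. The main obstacle is the radical-matching step identifying $\tilde{\psi}_{\hat{\beta}}$ (among the many extensions of $\psi_{\beta}$ to $J$) as actually appearing in $\sigma|_J$, which depends on the Heisenberg structural analysis together with the triviality of $\sigma$ on $Z^{l'}$.
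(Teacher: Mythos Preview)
Your overall strategy is sound and, once $\tilde{\psi}_{\hat{\beta}}$ is known to occur in $\sigma|_J$, takes a different route from the paper. The paper argues by a degree sandwich: Frobenius reciprocity places $\sigma$ inside $\Ind_{J}^{S}\tilde{\psi}_{\hat{\beta}} = \Ind_{CJ}^{S}\Ind_{J}^{CJ}\tilde{\psi}_{\hat{\beta}}=\sum_\omega \Ind_{CJ}^{S}\omega$ (sum over the linear extensions $\omega$ of $\tilde{\psi}_{\hat{\beta}}$), so $\deg\sigma \le [S:CJ]$; meanwhile $\sigma|_{K^{l'}}\supseteq \Ind_J^{K^{l'}}\tilde{\psi}_{\hat{\beta}}$ gives $\deg\sigma \ge [K^{l'}:J]$; and the elementary identity $C\cap J = C\cap K^{l'}$ (using $\hat{\Delta}\in\mfp$) yields $[S:CJ]=[K^{l'}:J]$, forcing $\sigma = \Ind_{CJ}^S\omega$. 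You instead compute $\Stab_S(\tilde{\psi}_{\hat{\beta}}) = CJ$ directly, via $C$-stability (Lemma~\ref{lem:psi_beta on the radical extends to tilde-psi_beta on J}) and maximal isotropy of $J/K^l$, and conclude by Clifford theory for $J\trianglelefteq S$. The two arguments are equivalent---your stabiliser identity is the structural content behind the paper's index equality---with yours more conceptual and the paper's marginally more self-contained (it needs only the concrete calculation of $C\cap J$, not the $K^{l'}$-half of the stabiliser or the normality of $J$ in $S$).

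The ``main obstacle'' you isolate is real and is glossed over in the paper's proof as well. For $G=\GL_2$, Lemmas~\ref{lem:Heisenberg lemma for K^l'/K^l}--\ref{lem:psi_beta on the radical extends to tilde-psi_beta on J} only produce $\sigma|_{K^{l'}}\supseteq\Ind_J^{K^{l'}}\tilde{\psi}$ for some extension $\tilde{\psi}$ with $\tilde{\psi}|_{U^{l'}K^l}=\psi_{\hat{\beta}}$; replacing $\tilde{\psi}$ by the specific $\tilde{\psi}_{\hat{\beta}}$ tacitly uses that $\sigma|_{Z^{l'}}=\mathbf{1}$, since $\Tr\hat{\beta}=0$ forces $\tilde{\psi}_{\hat{\beta}}|_{Z^{l'}}=1$. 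For $\SL_2$ with $p\neq 2$ this is vacuous because $Z^{l'}=\{I\}$, and in the only application (Theorem~\ref{thm:p even - r odd - v=00003Dl'}) $\sigma$ is trivial on all of $Z$, so nothing is lost; but your diagnosis that the $\GL_2$ case of the proposition as stated needs this extra hypothesis is correct.
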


\begin{proof}
Let $\sigma\in\Irr(S\mid\psi_{\beta})$. Recall that $S=C_{G}(\hat{\beta})K^{l'}$
for \emph{any} lift $\hat{\beta}\in\mfg(\cO_{r})$. Then $\sigma|_{K^{l'}}$
has an irreducible constituent that lies above $\psi_{\beta}$, so
by Lemma~\ref{lem:Heisenberg lemma for K^l'/K^l},
together with Lemma~\ref{lem:every char of U^l'K^l above psi_beta is =00005Cpsi_hat-beta}
and Lemma~\ref{lem:psi_beta on the radical extends to tilde-psi_beta on J},
$\sigma|_{K^{l'}}$ has an irreducible constituent of the form $\Ind_{J}^{K^{l'}}\tilde{\psi}_{\hat{\beta}}$.
Thus, by Frobenius reciprocity, $\sigma$ is an irreducible constituent
of 
\[
\Ind_{J}^{S}\tilde{\psi}_{\hat{\beta}}=\Ind_{CJ}^{S}\Ind_{J}^{CJ}\tilde{\psi}_{\hat{\beta}}=\sum_{\omega\in\Irr(CJ\mid\tilde{\psi}_{\hat{\beta}})}\Ind_{CJ}^{S}\omega.
\]
By Lemma~\ref{lem:J normal and tilde-psi_beta extends},
$\Irr(CJ\mid\tilde{\psi}_{\hat{\beta}})$ consists of all the extensions
of $\tilde{\psi}_{\hat{\beta}}$ to $CJ$, so in particular, every
$\omega$ is of degree one. As $\sigma$ is irreducible, it is contained
in some $\Ind_{CJ}^{S}\omega$, so $\deg\sigma\leq[S:CJ]$. On the
other hand, since $\sigma|_{K^{l'}}$ contains $\Ind_{J}^{K^{l'}}\tilde{\psi}_{\hat{\beta}}$,
we must have $\deg\sigma\geq[K^{l'}:J]$. But since $\hat{\Delta}\in\mfp$,
\[
C\cap J=\left\{ \begin{pmatrix}a & b\hat{\lambda}\\
b\hat{\lambda}^{-1}\hat{\Delta} & a
\end{pmatrix}\mid a\in1+\mfp^{l'},b\in\mfp^{l'}\right\} \cap G=C\cap K^{l'},
\]
so
\[
[CK^{l'}:CJ]=\frac{|C|/|C\cap K^{l'}|\cdot|K^{l'}|}{|C|/|C\cap J|\cdot|J|}=\frac{|K^{l'}|}{|J|}=[K^{l'}:J]=[S:CJ].
\]
Thus $\deg\sigma=[S:CJ]$ and therefore
\[
\sigma=\Ind_{CJ}^{S}\omega.
\]
\end{proof}
Recall from just before Lemma~(\ref{lem:intersection of centralisers in ZUK})
that we use $w$ to denote $\val(\hat{\Delta}_{l})$ and that $w=v$
whenever $v<l'$.
\begin{lem}
\label{lem:intersection CJ cap C(g) is ZU^1K^l cap C(g)}Assume
that $\Delta\in\mfp$. If $w=l$ or $w<l$ and $p\neq2$, then $ZU^{l-w}K^{l}\subseteq C_{G}(\hat{\beta})J$
and 
\[
C_{G}(\hat{\beta})J\cap C_{G}(g_{\hat{\beta}})=ZU^{l-w}K^{l}\cap C_{G}(g_{\hat{\beta}}).
\]
Moreover, if $\hat{\beta}'=\begin{pmatrix}0 & \hat{\lambda}'\\
(\hat{\lambda}')^{-1}\hat{\Delta}' & 0
\end{pmatrix}\in\M_{2}(\cO_{r})$ is another lift of $\beta$ such that $\hat{\lambda}^{-1}\hat{\Delta}\equiv(\hat{\lambda}')^{-1}\hat{\Delta}'\mod{\mfp}^{l}$,
then 
\[
ZU^{l-w}K^{l}\cap C_{G}(g_{\hat{\beta}})=ZU^{l-w}K^{l}\cap C_{G}(g_{\hat{\beta}'}).
\]
\end{lem}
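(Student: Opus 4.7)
The proof naturally divides into three assertions, and I would attack them in the order they are stated.

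For the first inclusion $ZU^{l-w}K^{l}\subseteq C_{G}(\hat{\beta})J$, I would simply combine three ingredients: $Z\subseteq C_{G}(\hat{\beta})$ is immediate; $K^{l}\subseteq J=B^{l'}K^{l}$ by definition of $J$; and $U^{l-w}\subseteq C_{G}(\hat{\beta})K^{l}$ from \eqref{eq:Ul-w inside is in CK^l}. Chaining these gives $ZU^{l-w}K^{l}\subseteq C_{G}(\hat{\beta})\cdot K^{l}\subseteq C_{G}(\hat{\beta})J$.

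For the main intersection equality, the first inclusion together with the trivial containment $ZU^{l-w}K^{l}\cap C_{G}(g_{\hat{\beta}})\subseteq C_{G}(\hat{\beta})J\cap C_{G}(g_{\hat{\beta}})$ already yields one direction, so the task is to show $C_{G}(\hat{\beta})J\cap C_{G}(g_{\hat{\beta}})\subseteq ZU^{l-w}K^{l}$. Taking $h$ in the left side and reducing modulo $K^{l}$, I would write $h_{l}=c_{l}j_{l}$ with $c_{l}\in C_{G_{l}}(\hat{\beta}_{l})$ and $j_{l}\in B^{l'}_{l}$, and then impose $h_{l}\in C_{G_{l}}((g_{\hat{\beta}})_{l})$, the latter centraliser being explicit via regularity of $(g_{\hat{\beta}})_{l}$. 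In the case $w=l$ the matrix $\hat{\beta}_{l}$ is nilpotent upper triangular and $(g_{\hat{\beta}})_{l}-I=\hat{\beta}_{l}$, so both centralisers equal the upper-triangular-with-scalar-diagonal subgroup, and a product with $j_{l}\in B^{l'}_{l}$ is then itself upper triangular; equating diagonal entries forces $h_{l}$ into $ZU/K^{l}$, using the determinant constraint in the $\SL_{2}$ case. In the case $w<l$ (with $p\neq2$), comparing the $(2,1)$-entries yields $y\hat{\Delta}_{l}\equiv 0$ in $\cO_{l}$, hence the centraliser-parameter $y\in\mathfrak{p}^{l-w}$; comparing $(1,1)$ and $(2,2)$ entries and using the $\SL_{2}$ relation $a+d\equiv 0 \bmod \pi$ together with the equality $a\equiv d \bmod \pi$, the hypothesis $p\neq 2$ allows one to invert $2$ and conclude $a=d=0$ in $\F_{q}$, so $j_{l}$ is concentrated in $U^{l'}$ modulo $K^{l}$; together with $y\in\mathfrak{p}^{l-w}$ the surviving off-diagonal entry lies in $\mathfrak{p}^{l-w}$, giving $h_{l}\in ZU^{l-w}/K^{l}$.

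For the ``moreover'' assertion, I would parametrise $h\in ZU^{l-w}K^{l}\cap C_{G}(g_{\hat{\beta}})$ as $h=xI+y(g_{\hat{\beta}}-I)$, noting that the $(1,2)$-entry condition forces $y\in\mathfrak{p}^{l-w}$. Since $xI$ is central, $[h,g_{\hat{\beta}'}]=y\,[g_{\hat{\beta}},g_{\hat{\beta}'}]$, so it suffices to show this lies in $\mathfrak{p}^{r}\M_{2}$. The plan is to compute $[g_{\hat{\beta}},g_{\hat{\beta}'}]$ entrywise, writing $\mu:=\hat{\lambda}-\hat{\lambda}'\in\mathfrak{p}^{l'}$ and $\delta:=\hat{\lambda}^{-1}\hat{\Delta}-(\hat{\lambda}')^{-1}\hat{\Delta}'\in\mathfrak{p}^{l}$ and expanding; crucial cancellations of the form $\mu(\hat{\lambda}')^{-1}\hat{\Delta}'+\hat{\lambda}'\delta$ (whose naive bounds are only $\mathfrak{p}^{l}$) collapse to combinations supported in $\mathfrak{p}^{l'+w}$, which when multiplied by $y\in\mathfrak{p}^{l-w}$ reaches $\mathfrak{p}^{l+l'}=\mathfrak{p}^{r}=0$. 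This yields $h\in C_{G}(g_{\hat{\beta}'})$, and the symmetric roles of $\hat{\beta},\hat{\beta}'$ give the reverse inclusion. The main obstacle is precisely this final valuation estimate: one has to go beyond obvious term-by-term bounds to obtain the sharper $\mathfrak{p}^{l'+w}$ control on the commutator $[g_{\hat{\beta}},g_{\hat{\beta}'}]$, and only the structured algebraic form of $g_{\hat{\beta}}$ (regular in $\GL_{2}$, explicit bottom row $-\hat{\lambda}^{-1}\hat{\Delta}, 1-\hat{\Delta}$) makes these cancellations available.
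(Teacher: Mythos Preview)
Your treatment of the inclusion $ZU^{l-w}K^{l}\subseteq C_{G}(\hat{\beta})J$ and of the case $w=l$ in the intersection equality is fine and matches the paper. There are, however, two genuine gaps.

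\textbf{The case $w<l$ of the intersection equality.} The claim that comparing the $(2,1)$-entries yields $y\hat{\Delta}_{l}\equiv 0$, hence $y\in\mfp^{l-w}$, is not correct. Writing the $C_{G_{l}}(\hat{\beta}_{l})$-factor of $h_{l}$ as $\left(\begin{smallmatrix} a & b\hat{\lambda}_{l} \\ b\hat{\lambda}_{l}^{-1}\hat{\Delta}_{l} & a\end{smallmatrix}\right)$, the $(2,1)$-entry of $h_{l}=c_{l}j_{l}$ is $b\hat{\lambda}_{l}^{-1}\hat{\Delta}_{l}$ up to a unit from $B^{l'}_{l}$, so matching it with the expression $-y\hat{\lambda}_{l}^{-1}\hat{\Delta}_{l}$ from the $C_{G_{l}}((g_{\hat{\beta}})_{l})$-description only gives $b\equiv -y\pmod{\mfp^{l-w}}$, not $y\in\mfp^{l-w}$. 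The paper pairs this with the $(1,2)$-entry comparison $b\hat{\lambda}_{l}+t\pi^{l'}=y\hat{\lambda}_{l}$, giving $b\equiv y\pmod{\mfp^{l'}}$; since $l-w\leq l'$ (here is where $\Delta\in\mfp$, i.e.\ $w\geq 1$, is used) one obtains $2y\equiv 0\pmod{\mfp^{l-w}}$, and \emph{this} is where $p\neq 2$ is invoked. Your alternative via diagonal entries and an ``$\SL_{2}$ relation $a+d\equiv 0$'' cannot cover $G=\GL_{2}(\cO_{r})$, which the lemma must also handle.

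\textbf{The ``moreover'' assertion.} Your route via the matrix commutator $g_{\hat{\beta}}g_{\hat{\beta}'}-g_{\hat{\beta}'}g_{\hat{\beta}}$ is entirely different from the paper's, and the claimed valuation bound fails. A direct computation gives the $(1,1)$-entry of this commutator as $\hat{\lambda}\delta-\mu\hat{\lambda}^{-1}\hat{\Delta}$ (in your notation, up to sign). Taking $\hat{\lambda}=\hat{\lambda}'$ (so $\mu=0$) and $\delta$ of exact valuation $l$ produces an entry lying in $\mfp^{l}\setminus\mfp^{l+1}$; once $w\geq 2$ this is not in $\mfp^{l'+w}$, and there is nothing left to cancel against. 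So the step ``multiplying by $y\in\mfp^{l-w}$ reaches $\mfp^{r}$'' does not go through. The paper instead argues structurally: since $\val(\hat{\Delta}_{l})=\val(\hat{\Delta}'_{l})$, the already-proved equality applies to both lifts, and the paper chains
\[
ZU^{l-w}K^{l}\cap C_{G}(g_{\hat{\beta}})=C_{G}(\hat{\beta})J\cap C_{G}(g_{\hat{\beta}})=C_{G}(\hat{\beta}')J\cap C_{G}(g_{\hat{\beta}'})=ZU^{l-w}K^{l}\cap C_{G}(g_{\hat{\beta}'}),
\]
rather than computing anything entrywise.
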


\begin{proof}
Let $h\in C_{G}(\hat{\beta})J\cap C_{G}(g_{\hat{\beta}})$. Then 
\begin{align*}
h_{l} & \in C_{G_{l}}(\hat{\beta}_{l})\Big(\begin{smallmatrix}1+\mfp^{l'} & \mfp^{l'}\\
0 & 1+\mfp^{l'}
\end{smallmatrix}\Big)\cap C_{G_{l}}\Big(\begin{smallmatrix}1 & \hat{\lambda}_{l}\\
-\hat{\lambda}_{l}^{-1}\hat{\Delta}_{l} & 1-\hat{\Delta}_{l}
\end{smallmatrix}\Big)\\
 & \subseteq\left\{ \begin{pmatrix}a+\mfp^{l'} & b\hat{\lambda}_{l}+\mfp^{l'}\\
b\hat{\lambda}_{l}^{-1}\hat{\Delta}_{l} & a+\mfp^{l'}
\end{pmatrix}\mid a,b\in\cO_{l}\right\} \cap\left\{ \begin{pmatrix}x & y\hat{\lambda}_{l}\\
-y\hat{\lambda}_{l}^{-1}\hat{\Delta}_{l} & x-y\hat{\Delta}_{l}
\end{pmatrix}\mid x,y\in\cO_{l}\right\}.
\end{align*}
A matrix in this intersection is of the form
\[
\begin{pmatrix}a+s\pi^{l'} & b\hat{\lambda}_{l}+t\pi^{l'}\\
b\hat{\lambda}_{l}^{-1}\hat{\Delta}_{l} & a+u\pi^{l'}
\end{pmatrix}=\begin{pmatrix}x & y\hat{\lambda}_{l}\\
-y\hat{\lambda}_{l}^{-1}\hat{\Delta}_{l} & x-y\hat{\Delta}_{l}
\end{pmatrix}.
\]
 If $\hat{\Delta}_{l}=0$ (i.e., $w=l$), then 
\[
h_{l}\in\left\{ \begin{pmatrix}x & y\\
0 & x
\end{pmatrix}\mid x,y\in\cO_{l}\right\} \cap G_{l},
\]
so that $h\in ZUK^{l}=ZU^{0}K^{l}$. 

If $w<l$, then $b\hat{\lambda}_{l}^{-1}\hat{\Delta}_{l}=-y\hat{\lambda}_{l}^{-1}\hat{\Delta}_{l}$
implies that $b\equiv-y\mod{\mfp^{l-w}}$. We also have $b\hat{\lambda}_{l}+t\pi^{l'}=y\hat{\lambda}_{l}$,
which implies that $b\equiv y\mod{\mfp^{l'}}$. Since $l-w\leq l'$
($w\geq1$ since $\Delta\in\mfp$), we obtain $2y\equiv0\mod{\mfp}^{l-w}$
and as $p\neq2$, we deduce that
\[
y\in\mfp^{l-w}.
\]
This implies that $y\hat{\Delta}_{l}=0$ and thus 
\[
h_{l}\in\left\{ \begin{pmatrix}x & y\\
0 & x
\end{pmatrix}\mid x\in\cO_{l},y\in\mfp^{l-w}\right\} \cap G_{l},
\]
so that $h\in ZU^{l-w}K^{l}$. 

Thus, in either case, $C_{G}(\hat{\beta})J\cap C_{G}(g_{\hat{\beta}})\subseteq ZU^{l-w}K^{l}\cap C_{G}(g_{\hat{\beta}})$.
Furthermore, for any $h\in ZU^{l-w}K^{l}$,
\begin{align*}
h_{l}\in & Z_{l}U_{l}^{l-w}=\left\{ \begin{pmatrix}x & y\\
0 & x
\end{pmatrix}\mid x,y\in\cO_{l},y\in\mfp^{l-w}\right\} \cap G_{l}\subseteq C_{G_{l}}(\hat{\beta}_{l}),
\end{align*}
so that $h\in C_{G}(\hat{\beta})K^{l}\subseteq C_{G}(\hat{\beta})J$.
Hence $ZU^{l-w}K^{l}\subseteq C_{G}(\hat{\beta})J$ and therefore
$C_{G}(\hat{\beta})J\cap C_{G}(g_{\hat{\beta}})=ZU^{l-w}K^{l}\cap C_{G}(g_{\hat{\beta}})$.

Finally, for $\hat{\beta}'$ as in the statement, we have $\val(\hat{\Delta}_{l})=\val(\hat{\Delta}_{l}')$
(that is, the same $w$), so by what we have already proved, 
\[
ZU^{l-w}K^{l}\cap C_{G}(g_{\hat{\beta}})=C_{G}(\hat{\beta})J\cap C_{G}(g_{\hat{\beta}})=C_{G}(\hat{\beta}')J\cap C_{G}(g_{\hat{\beta}'})=ZU^{l-w}K^{l}\cap C_{G}(g_{\hat{\beta}'}).
\]
\end{proof}
\begin{thm}
\label{thm:p even - r odd - v=00003Dl'}Assume
that $r$ is odd and $\Delta\in\mfp$. Assume moreover that either
$G=\GL_{2}(\cO_{r})$ or $G=\SL_{2}(\cO_{r})$ and $p\neq2$. For any regular $\rho\in\Irr(G_{})$ that is trivial
on $Z$ there exists an element $g_{\hat{\beta}}$ as in Section~\ref{sec:A-family-of} such that $\rho$ restricted to $C_G(g_{\hat{\beta}})$ contains the trivial character.
\end{thm}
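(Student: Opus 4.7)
The plan is to follow the same template as Theorem~\ref{thm:p odd - r odd - v < l'}, but with the Heisenberg lift of Lemma~\ref{lem:Heisenberg_induction} replaced by the induction from $CJ$ supplied by Proposition~\ref{prop:sigma induced from CJ}, which is available precisely in the regime $\Delta\in\mfp$. First, Lemma~\ref{lem: tau can be taken as 0} lets me assume $\rho$ contains $\psi_{\beta}$ for $\beta=\bigl(\begin{smallmatrix}0 & \lambda\\ \lambda^{-1}\Delta & 0\end{smallmatrix}\bigr)\in\mfg(\cO_{l'})$. Lemma~\ref{lem:constr_regular_chars_for_GL2_or_SL2} gives $\rho=\Ind_{S}^{G}\sigma$ with $S=\Stab_{G}(\psi_{\beta})=C_{G}(\hat{\beta})K^{l'}$ and $\sigma\in\Irr(S\mid\psi_{\beta})$ for any lift $\hat\beta$, and Proposition~\ref{prop:sigma induced from CJ} then refines this to $\sigma=\Ind_{CJ}^{S}\omega$ for some specific lift $\hat{\beta}=\bigl(\begin{smallmatrix}0 & \hat{\lambda}\\ \hat{\lambda}^{-1}\hat{\Delta} & 0\end{smallmatrix}\bigr)$ (which I fix throughout) and some linear extension $\omega$ of $\tilde{\psi}_{\hat{\beta}}$ from $J$ to $CJ$. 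Transitivity of induction yields $\rho=\Ind_{CJ}^{G}\omega$.

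By Lemma~\ref{lem: is contained in the conj rep iff sigma }\,(\ref{enu: i)}), it suffices to show that the linear character $\omega$ is trivial on $CJ\cap C_{G}(g_{\hat{\beta}})$. Lemma~\ref{lem:intersection CJ cap C(g) is ZU^1K^l cap C(g)} identifies this intersection with $ZU^{l-w}K^{l}\cap C_{G}(g_{\hat{\beta}})$, and triviality on the $Z$ factor follows from Lemma~\ref{lem: rho trivial on Z iff sigma trivial on Z} applied to $\rho=\Ind_{CJ}^{G}\omega$ (together with $Z\subseteq CJ$ and $\rho|_{Z}=\mathbf{1}$). It therefore remains to show that $\omega$ is trivial on $U^{l-w}K^{l}\cap C_{G}(g_{\hat{\beta}})$.

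Any $h$ in this final intersection has, by the proof of Lemma~\ref{lem:intersection CJ cap C(g) is ZU^1K^l cap C(g)}, the explicit form $h=\bigl(\begin{smallmatrix}x & y\hat{\lambda}\\ -y\hat{\lambda}^{-1}\hat{\Delta} & x-y\hat{\Delta}\end{smallmatrix}\bigr)$ with $x\in 1+\mfp^{l}$ and $y\in\mfp^{l-w}$ (so $y\hat{\Delta}\in\mfp^{l}$). Using $U^{l-w}K^{l}\subseteq CJ$, I would factor $h=cj$ with $c\in C$ and $j\in J$ (the natural candidate being $c=I+y\hat{\beta}\in C\cap K^{l-w}$, for which a direct matrix computation shows $c^{-1}h\in J$ under the above constraints) and then evaluate $\omega(h)=\omega(c)\cdot\tilde{\psi}_{\hat{\beta}}(j)$ via the closed trace formula of Lemma~\ref{lem:psi_beta on the radical extends to tilde-psi_beta on J}. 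The core cancellation, in exact analogy with Lemma~\ref{lem: psi_hat-beta res to U^l'-vK^l cap C is the trivial char}, is $\Tr(\hat{\beta}(h-I))=-y\hat{\Delta}+\hat{\Delta}y=0$, which handles the $j$-contribution.

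The main obstacle is evaluating $\omega$ on the $C$-part: unlike $\tilde{\psi}_{\hat{\beta}}|_{J}$, the extension $\omega|_{C}$ is not given by a closed formula, being characterised only by $\omega|_{C\cap J}=\tilde{\psi}_{\hat{\beta}}|_{C\cap K^{l'}}$ together with the homomorphism property on $CJ$. One needs to combine this homomorphism property with the triviality on $Z$ and the structure of $c=I+y\hat{\beta}$ forced by $h\in C_{G}(g_{\hat{\beta}})$ to pin down $\omega(c)$ so that it matches $\tilde{\psi}_{\hat{\beta}}(j)^{-1}$ and completes the cancellation. The hypothesis $p\neq 2$ is used (via Lemma~\ref{lem:intersection CJ cap C(g) is ZU^1K^l cap C(g)} in the case $w<l$) through the congruence $2y\equiv 0\pmod{\mfp^{l-w}}$ that controls the shape of the intersection.
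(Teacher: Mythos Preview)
Your reduction down to showing that the linear character $\omega$ is trivial on $ZU^{l-w}K^{l}\cap C_{G}(g_{\hat{\beta}})$, and further to $U^{l-w}K^{l}\cap C_{G}(g_{\hat{\beta}})$ via triviality on $Z$, is exactly right and matches the paper. The divergence, and the gap you correctly flag, is in the final step.

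Your factorisation $h=cj$ with $c=I+y\hat{\beta}$ is fine set-theoretically, but when $w\geq 2$ one has $l-w\leq l'-1<l'$, so $c\in C\cap K^{l-w}$ need not lie in $C\cap K^{l'}=C\cap J$. Thus $\omega(c)$ is genuinely \emph{not} determined by $\tilde{\psi}_{\hat{\beta}}$ and the homomorphism property; the extension $\omega|_{C}$ is an arbitrary character of the abelian group $C/(C\cap J)$ extending $\tilde{\psi}_{\hat{\beta}}|_{C\cap J}$, and there is no cancellation mechanism to force $\omega(c)=\tilde{\psi}_{\hat{\beta}}(j)^{-1}$. So the approach, as stated, does not close.

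The paper avoids evaluating $\omega$ on $C$ altogether. The key move is to apply Proposition~\ref{prop:There exist extns to U^l'-vK^l by formula and every extn is of this form} not just on $K^{l}$ but on the larger group $U^{l-w}K^{l}\subseteq U^{l'-v}K^{l}$: every character of $U^{l-w}K^{l}$ above $\psi_{\beta}$ is (the restriction of) some $\psi_{\hat{\beta}'}$, so $\omega|_{U^{l-w}K^{l}}=\psi_{\hat{\beta}'}$ for a \emph{second} lift $\hat{\beta}'$. Now restrict further to $U^{l'}K^{l}$: there $\omega$ agrees with $\tilde{\psi}_{\hat{\beta}}$, so $\psi_{\hat{\beta}}|_{U^{l'}K^{l}}=\psi_{\hat{\beta}'}|_{U^{l'}K^{l}}$, and Lemma~\ref{lem:every char of U^l'K^l above psi_beta is =00005Cpsi_hat-beta} forces $\hat{\lambda}^{-1}\hat{\Delta}\equiv(\hat{\lambda}')^{-1}\hat{\Delta}'\pmod{\mfp^{l}}$. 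The ``moreover'' clause of Lemma~\ref{lem:intersection CJ cap C(g) is ZU^1K^l cap C(g)} then gives $ZU^{l-w}K^{l}\cap C_{G}(g_{\hat{\beta}})=ZU^{l-w}K^{l}\cap C_{G}(g_{\hat{\beta}'})$, and Lemma~\ref{lem: psi_hat-beta res to U^l'-vK^l cap C is the trivial char} applied to $\psi_{\hat{\beta}'}$ on $U^{l-w}K^{l}\cap C_{G}(g_{\hat{\beta}'})$ finishes. In other words, rather than compute $\omega$ on $C$, one \emph{changes the centraliser element} from $g_{\hat{\beta}}$ to $g_{\hat{\beta}'}$ so that the known formula $\psi_{\hat{\beta}'}$ for $\omega|_{U^{l-w}K^{l}}$ matches the centraliser.
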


\begin{proof}
By Lemma~\ref{lem: tau can be taken as 0},
$\rho\in\Irr(G\mid\psi_{\beta})$ for some $\beta=\begin{pmatrix}0 & \lambda\\
\lambda^{-1}\Delta & 0
\end{pmatrix}\in\mfg(\cO_{l}),\lambda\in\cO_{l}^{\times}$. By Lemma~\ref{lem:constr_regular_chars_for_GL2_or_SL2}
and Proposition~\ref{prop:sigma induced from CJ},
$\rho=\Ind_{C_{G}(\hat{\beta})J}^{G}\omega$ for some lift $\hat{\beta}=\begin{pmatrix}0 & \hat{\lambda}\\
\hat{\lambda}^{-1}\hat{\Delta} & 0
\end{pmatrix}\in\sl_{2}(\cO_{r})$ and an extension $\omega$ of $\tilde{\psi}_{\hat{\beta}}$ to $C_{G}(\hat{\beta})J$. 

By Lemma~\ref{lem: is contained in the conj rep iff sigma }\,(\ref{enu: i)})
with $H=G$ and $N=C_{G}(\hat{\beta})J$, it is enough to prove that
$\omega$ restricted to $C_{G}(\hat{\beta})J\cap C_{G}(g)$ is the
trivial character, for some $g\in G$. By Lemma~\ref{lem:intersection CJ cap C(g) is ZU^1K^l cap C(g)},
\[
C_{G}(\hat{\beta})J\cap C_{G}(g_{\hat{\beta}})=ZU^{l-w}K^{l}\cap C_{G}(g_{\hat{\beta}}),
\]
as well as $U^{l-w}K^{l}\subseteq C_{G}(\hat{\beta})J$. It is thus
sufficient to show the following.
\begin{claim*}
$\omega$ restricted to $ZU^{l-w}K^{l}\cap C_{G}(g_{\hat{\beta}})$
is the trivial character.
\end{claim*}
By Proposition~\ref{prop:There exist extns to U^l'-vK^l by formula and every extn is of this form}
every irreducible character of $U^{l-w}K^{l}$ containing $\psi_{\beta}$
is of the form $\psi_{\hat{\beta}'}$, for some lift $\hat{\beta}'=\begin{pmatrix}0 & \hat{\lambda}'\\
(\hat{\lambda}')^{-1}\hat{\Delta}' & 0
\end{pmatrix}\in\sl_{2}(\cO_{r})$ of $\beta$ (note that it is always true that $l-w\geq l'-v$, as
either $w=v$ or $v=l'$; thus $U^{l-w}\subseteq U^{l'-v}$). We may
therefore choose $\hat{\beta}'$ such that $\omega|_{U^{l-w}K^{l}}=\psi_{\hat{\beta}'}$.
Then, by construction,
\[
\tilde{\psi}_{\hat{\beta}}|_{U^{l'}K^{l}}=\omega|_{U^{l'}K^{l}}=\psi_{\hat{\beta}'}|_{U^{l'}K^{l}},
\]
(note that since $\Delta\in\mfp$, we have $w\geq1$, so $l-w\leq l'$
and hence $U^{l'}\subseteq U^{l-w}$) so by Lemma~\ref{lem:every char of U^l'K^l above psi_beta is =00005Cpsi_hat-beta}
we must have $\hat{\lambda}^{-1}\hat{\Delta}\equiv(\hat{\lambda}')^{-1}\hat{\Delta}'\mod{\mfp}^{l}$.
Thus by Lemma~\ref{lem:intersection CJ cap C(g) is ZU^1K^l cap C(g)},
\[
ZU^{l-w}K^{l}\cap C_{G}(g_{\hat{\beta}})=ZU^{l-w}K^{l}\cap C_{G}(g_{\hat{\beta}'}),
\]
so, as $\omega$ is necessarily trivial on $Z$ (since $\rho$ is),
the above claim is equivalent to the claim that $\psi_{\hat{\beta}'}$
contains the trivial character when restricted to $U^{l-w}K^{l}\cap C_{G}(g_{\hat{\beta'}})$.
But since $U^{l-w}\subseteq U^{l'-v}$, this indeed holds by Lemma~\ref{lem: psi_hat-beta res to U^l'-vK^l cap C is the trivial char}.
\end{proof}

\subsection{The main theorem when $p$
is odd}
\label{subsec:The-main-theorem}

Using the theorems in the preceding subsections, it is now easy to
finish the proof of our main theorem when $p$ is odd. The only remaining
ingredient is the following lemma and its $\SL_{2}(\F_{q})$-analogue,
which form the base cases of the inductive procedure of the proof.
\begin{lem}
\label{lem:GL_2 over finite field} Every irreducible
character of $\GL_{2}(\F_{q})$ that is trivial on the centre is contained
in the conjugation character.
\end{lem}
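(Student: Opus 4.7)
The plan is to invoke \cref{lem:in conj rep if trivial on C} and, for each irreducible character $\chi$ of $\GL_2(\F_q)$ trivial on the centre, exhibit an explicit element $g \in \GL_2(\F_q)$ such that $\langle \chi|_{C_G(g)}, \mathbf{1}\rangle > 0$. I would proceed case by case using the standard classification of $\Irr(\GL_2(\F_q))$ into four families: (i) one-dimensional characters $\alpha \circ \det$ with $\alpha \in \widehat{\F_q^{\times}}$; (ii) Steinberg twists $\mathrm{St}\cdot(\alpha \circ \det)$ of degree $q$; (iii) principal series $R(\chi_1,\chi_2) = \Ind_B^G (\chi_1 \otimes \chi_2)$ with $\chi_1 \neq \chi_2$, of degree $q+1$; and (iv) cuspidal characters $\pi_\theta$ of degree $q-1$, indexed by characters $\theta$ of $\F_{q^2}^{\times}$ not fixed by Frobenius. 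Triviality on $Z$ amounts respectively to $\alpha^{2}=\mathbf{1}$, $\alpha^{2}=\mathbf{1}$, $\chi_2=\chi_1^{-1}$, and $\theta|_{\F_q^{\times}}=\mathbf{1}$.

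For the characters in families (i), (ii) and (iii), I would take $g$ to be a non-trivial unipotent element, say $g = \begin{pmatrix} 1 & 1 \\ 0 & 1 \end{pmatrix}$, whose centraliser is $C_G(g) = Z U$ where $U$ is the unipotent radical of the Borel. Using the standard character values on $ZU$ --- namely $(\alpha\circ\det)(aI \cdot u) = \alpha(a^{2}) = 1$; $(\mathrm{St}\cdot(\alpha\circ\det))(aI) = q$ and $=0$ on $ZU \setminus Z$; and $R(\chi_1, \chi_1^{-1})(aI) = q+1$ and $=1$ on $ZU \setminus Z$ --- the direct sum $\frac{1}{|ZU|}\sum_{h\in ZU} \chi(h)$ evaluates to a positive integer in each case. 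This deals with all characters in (i), (ii), (iii).

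For family (iv), cuspidal characters vanish after summation over $ZU$, so a different element is needed. I would take $g$ to be a regular element of the non-split torus $T' \cong \F_{q^2}^{\times}$ of order $q^{2}-1$, so that $C_G(g) = T'$. Using the known values $\pi_\theta(\beta I) = (q-1)\theta(\beta)$ for $\beta \in \F_q^{\times}$ and $\pi_\theta(\beta) = -(\theta(\beta) + \theta(\bar\beta))$ for $\beta \in T' \setminus Z$, together with the orthogonality $\sum_{\beta \in \F_{q^2}^{\times}} \theta(\beta) = 0$ (which follows from $\theta \neq \mathbf{1}$, which is forced since $\theta$ does not factor through the norm), the sum telescopes to
\[
\sum_{\beta \in T'} \pi_\theta(\beta) = (q-1)^{2} + 2(q-1) = q^{2}-1 = |T'|,
\]
so $\langle \pi_\theta|_{T'}, \mathbf{1} \rangle = 1$, as required.

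The only work is the careful assembly of the standard character-table values; there is no significant obstacle. One could alternatively observe that the statement for $\SL_2(\F_q)$ (which is part of the Heide--Saxl--Tiep--Zalesski theorem when $q > 3$) plus Clifford theory for the normal subgroup $\SL_2(\F_q) \cdot Z \trianglelefteq \GL_2(\F_q)$ of index $\leq 2$ would give the result, but the direct calculation above is shorter and self-contained.
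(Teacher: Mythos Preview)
Your proposal is correct and follows essentially the same approach as the paper's proof: both use the centraliser $ZU$ of a unipotent element for the linear, Steinberg, and principal series characters, and the non-split torus for the cuspidal characters, with the same character-table computations (the paper obtains the identical values $2(q-1)q$, $2(q-1)q$, $q(q-1)$, and $(q-1)^2+2(q-1)$ for the four families). The only differences are cosmetic ordering and your remark about the alternative route via $\SL_2$.
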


\begin{proof}
We make use of the character table of $\GL_{2}(\F_{q})$, available
for example in \cite[Section~11.7]{Digne-Michel_2nd_ed} to show that
any irreducible character that is trivial on the centre, when restricted
to a suitable centraliser, contains the trivial representation. First
let $\chi$ be a character of degree $q+1$. Then $\chi$ has value
$(q+1)\alpha(a)\beta(a)$ on any scalar conjugacy class $\left(\begin{smallmatrix}a\\
 & a
\end{smallmatrix}\right)$, where $\alpha,\beta\in\Irr(\F_{q}^{\times})$, $\alpha\neq\beta$,
so it is trivial on the centre if and only if $\alpha=\beta^{-1}$.
Let 
\[
C=C_{\GL_{2}(\F_{q})}\left(\begin{smallmatrix}1 & 1\\
0 & 1
\end{smallmatrix}\right)=\left\{ \left(\begin{smallmatrix}a & b\\
0 & a
\end{smallmatrix}\right)\mid a\in\F_{q}^{\times},b\in\F_{q}\right\} .
\]
On any element $\left(\begin{smallmatrix}a & b\\
0 & a
\end{smallmatrix}\right)$ where $b\neq0$, $\chi$ has value $\alpha(a)\beta(a)$. Assume that
$\chi$ is trivial on the centre. Then
\begin{align*}
|C|\cdot\langle\chi|_{C},\mathbf{1}\rangle & =\sum_{g\in C}\chi(g)=\sum_{a\in\F_{q}^{\times}}\chi(\left(\begin{smallmatrix}a\\
 & a
\end{smallmatrix}\right))+\sum_{a,b\in\F_{q}^{\times}}\chi(\left(\begin{smallmatrix}a & b\\
 & a
\end{smallmatrix}\right))\\
 & =(q-1)(q+1)+(q-1)^{2}=2(q-1)q,
\end{align*}
and thus $\langle\chi|_{C},\mathbf{1}\rangle\neq0$. (Note that
if we tried to use the diagonal subgroup $T$ instead of $C$ we would
run into problems for $q=2$, where $T=\{1\}$ is not the centraliser
of any element in $\GL_{2}(\F_{q})$. Alternatively, one could by
note that $\GL_{2}(\F_{2})=\SL_{2}(\F_{2})$, use \cite[Corollary~5]{tiep2023conjugation}
and proceed with $C=T$ and $q\geq3$.)

If $\chi$ is a character of degree $1$, then it has value $\alpha(a^{2})$
on any scalar conjugacy class $\left(\begin{smallmatrix}a\\
 & a
\end{smallmatrix}\right)$, where $\alpha\in\F_{q}^{\times}$, so it is trivial on the centre
if and only if $\alpha(a^{2})=1$ for all $a$. The same computation
as above shows that 
\begin{align*}
|C|\cdot\langle\chi|_{C},\mathbf{1}\rangle & =2(q-1)q,
\end{align*}
so $\langle\chi|_{C},\mathbf{1}\rangle\neq0$.

Next, let $\chi$ be a character of degree $q$. Then then it has
value $q\alpha(a^{2})$ on any scalar conjugacy class $\left(\begin{smallmatrix}a\\
 & a
\end{smallmatrix}\right)$, where $\alpha\in\F_{q}^{\times}$, so as in the previous case, it
is trivial on the centre if and only if $\alpha(a^{2})=1$ for all
$a$. Meanwhile, it has value $0$ on any element $\left(\begin{smallmatrix}a & b\\
0 & a
\end{smallmatrix}\right)$ where $b\neq0$, so if $\chi$ is trivial on the centre, then
\begin{align*}
|C|\cdot\langle\chi|_{C},\mathbf{1}\rangle & =\sum_{a\in\F_{q}^{\times}}\chi(\left(\begin{smallmatrix}a\\
 & a
\end{smallmatrix}\right))=q(q-1),
\end{align*}
so $\langle\chi|_{C},\mathbf{1}\rangle\neq0$.

Finally, let $\chi$ be a character of degree $q-1$. Then $\chi$
has value $(q-1)\omega(a)$ on any scalar conjugacy class $\left(\begin{smallmatrix}a\\
 & a
\end{smallmatrix}\right)$, where $\omega\in\Irr(\F_{q^{2}}^{\times})$, $\omega^{q}\neq\omega$,
so it is trivial on the centre if and only if $\omega(a)=1$ for all
$a\in\F_{q}^{\times}\subset\F_{q^{2}}^{\times}$ . As in \cite[Section~11.7]{Digne-Michel_2nd_ed},
we label non-central conjugacy classes that meet the non-split maximal
torus by elements $\left(\begin{smallmatrix}x & 0\\
0 & x^{q}
\end{smallmatrix}\right)$, where $x\in\F_{q^{2}}^{\times}$ and $x\neq x^{q}$. The element
in $\GL_{2}(\F_{q})$ corresponding to $\left(\begin{smallmatrix}x & 0\\
0 & x^{q}
\end{smallmatrix}\right)$ is $\lambda^{-1}\left(\begin{smallmatrix}x & 0\\
0 & x^{q}
\end{smallmatrix}\right)\lambda$, where $\lambda\in\GL_{2}(\bar{\F}_{q})$ is any element such that
$\lambda^{-1}F(\lambda)=\left(\begin{smallmatrix}0 & 1\\
1 & 0
\end{smallmatrix}\right)$ (here $F$ is the $q$-power Frobenius map). The non-split torus
$T_{w}\subset\GL_{2}(\F_{q})$ is 
\[
T_{w}=\{\lambda^{-1}\left(\begin{smallmatrix}x & 0\\
0 & x^{q}
\end{smallmatrix}\right)\lambda\mid x\in\F_{q^{2}}^{\times}\}=\bigcup_{a\in\F_{q}^{\times}}\left(\begin{smallmatrix}a\\
 & a
\end{smallmatrix}\right)\cup\bigcup_{\substack{x\in\F_{q^{2}}^{\times}\\
x\neq x^{q}
}
}\lambda^{-1}\left(\begin{smallmatrix}x & 0\\
0 & x^{q}
\end{smallmatrix}\right)\lambda
\]
Take any $x\in\F_{q^{2}}^{\times}$ such that $x\neq x^{q}$ (note
that such an $x$ always exists since $\F_{q}=\{x\in\F_{q^{2}}\mid x^{q}=x\}$
and $|\F_{q^{2}}^{\times}\setminus\F_{q}^{\times}|\geq(4-1)-(2-1)=2$).
Then
\[
T_{w}=C_{\GL_{2}(\F_{q})}\lambda^{-1}\left(\begin{smallmatrix}x & 0\\
0 & x^{q}
\end{smallmatrix}\right)\lambda
\]
 since $\left(\begin{smallmatrix}x & 0\\
0 & x^{q}
\end{smallmatrix}\right)$ is regular and semisimple. On any non-central element of $T_{w}$,
$\chi$ has value $-(\omega(x)+\omega(x^{q}))$. Assume that $\chi$
is trivial on the centre. Then
\begin{align*}
|T_{w}|\cdot\langle\chi|_{C},\mathbf{1}\rangle & =\sum_{a\in\F_{q}^{\times}}\chi(\left(\begin{smallmatrix}a\\
 & a
\end{smallmatrix}\right))+\sum_{\substack{x\in\F_{q^{2}}^{\times}\\
x\neq x^{q}
}
}\chi(\left(\begin{smallmatrix}x & 0\\
0 & x^{q}
\end{smallmatrix}\right))=(q-1)^{2}-\sum_{\substack{x\in\F_{q^{2}}^{\times}\\
x\neq x^{q}
}
}\omega(x)+\omega(x^{q})\\
 & =(q-1)^{2}-\left(-\sum_{a\in\F_{q}^{\times}}2\omega(x)\right)\text{ (since }\text{\ensuremath{\sum_{x\in\F_{q^{2}}^{\times}}\omega(x)=0}}\text{)}\\
 & =(q-1)^{2}+2(q-1),
\end{align*}
and thus $\langle\chi|_{T_{w}},\mathbf{1}\rangle\neq0$.
\end{proof}

\begin{thm}
\label{thm: main thm when p odd}Assume
that $p\neq2$ and let $G=\GL_{2}(\cO_{r})$ or $G=\SL_{2}(\cO_{r})$.
Then any $\rho\in\Irr(G)$ that is trivial on the centre is contained
in the conjugation character of $G$.
\end{thm}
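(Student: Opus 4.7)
The plan is a straightforward induction on $r \geq 1$, assembling the preceding work.

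For the base case $r = 1$: the $\GL_2(\F_q)$ statement is exactly \cref{lem:GL_2 over finite field}. For $\SL_2(\F_q)$ with $p$ odd, I would apply \cref{lem:deduce the main property for a normal subgroup} to $\SL_2(\F_q) \trianglelefteq \GL_2(\F_q)$, observing that the two centralisers used in the proof of \cref{lem:GL_2 over finite field} --- namely $C_{\GL_2(\F_q)}\Big(\begin{smallmatrix}1 & 1\\ 0 & 1\end{smallmatrix}\Big)$ and the non-split torus $T_w$ --- are centralisers of elements of $\SL_2(\F_q)$. The unipotent case is immediate; for $T_w$ one needs an $x \in \F_{q^2}^\times \setminus \F_q^\times$ with $x^{q+1} = 1$, and since $x^{q+1}=1$ has $q+1$ roots in $\F_{q^2}^\times$ of which at most two lie in $\F_q^\times$, such an $x$ exists.

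For the inductive step ($r \geq 2$), assume the theorem for $G_{r-1}$ and let $\rho \in \Irr(G)$ be trivial on $Z$. If $\rho$ is not twist-primitive, then \cref{lem:reducing to twist-primitive when p odd}, invoked with the inductive hypothesis, immediately places $\rho$ in the conjugation character. Otherwise $\rho$ is twist-primitive, hence regular by \cref{lem:constr_regular_chars_for_GL2_or_SL2}\,(\ref{enu: constr lemma i)}). By \cref{lem: tau can be taken as 0} I may take $\rho$ lying above some $\psi_\beta$ with
\[
\beta = \begin{pmatrix} 0 & \lambda \\ \lambda^{-1}\Delta & 0 \end{pmatrix} \in \mfg(\cO_{l'}), \qquad \lambda \in \cO_{l'}^\times.
\]
I then invoke whichever of the three regular-case theorems applies: \cref{thm:p odd - r even} when $r$ is even; \cref{thm:p odd - r odd - v < l'} when $r$ is odd and $v < l'$; and \cref{thm:p even - r odd - v=00003Dl'} when $r$ is odd and $v = l'$ (in which case $\Delta = 0 \in \mfp$, matching that theorem's hypothesis). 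Each supplies an element $g_{\hat\beta} \in G$ such that $\rho|_{C_G(g_{\hat\beta})}$ contains $\mathbf{1}$, and \cref{lem:in conj rep if trivial on C} concludes.

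At this stage there is no substantive obstacle --- the difficult representation-theoretic content already resides in the three regular-case theorems, in the non-twist-primitive reduction \cref{lem:reducing to twist-primitive when p odd}, and in the finite-field lemma. The only points requiring care are (i) checking that the three regular-case theorems exhaust all possibilities, which they do since the dichotomy $v < l'$ versus $v = l'$ is complete for odd $r$; and (ii) setting up the $\SL_2(\F_q)$ base case correctly, which is the one place where a small additional counting verification enters.
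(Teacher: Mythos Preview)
Your proposal is correct and matches the paper's proof almost exactly: the same induction on $r$, the same split into non-twist-primitive versus twist-primitive (hence regular), and the same appeal to the three regular-case theorems.

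The one difference is in the $\SL_2(\F_q)$ base case. The paper's published proof cites the general results of Heide--Saxl--Tiep--Zalesski (together with Tiep's direct verification for $\SL_2(\F_2)$ and $\SL_2(\F_3)$), whereas you give a self-contained argument via \cref{lem:deduce the main property for a normal subgroup}, checking that the two centralisers used for $\GL_2(\F_q)$ can be realised as centralisers of elements of determinant one. Your counting is fine: the $q+1$ solutions of $x^{q+1}=1$ in $\F_{q^2}^\times$ meet $\F_q^\times$ only in $\{\pm 1\}$, so for odd $q\geq 3$ there are $q-1\geq 2$ admissible $x$. In fact this is precisely the alternative argument the authors drafted and then commented out in the source; your route has the advantage of being internal to the paper, while theirs avoids re-proving a known fact.
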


\begin{proof}
We prove this by induction on $r$. When $r=1$, the result is Lemma~\ref{lem:GL_2 over finite field}  for $\GL_2(\F_q)$ and for $\SL_2(\F_q)$ it follows from the general results in  \cite{Heide-Saxl-Tiep-Zalesski}, together with the verification for $\SL_{2}(\F_{2})$ and $\SL_{2}(\F_{3})$ in \cite[Corollary~5]{tiep2023conjugation}. 

Now let $r\geq2$ and assume that the result holds for $G_{r-1}$. Let $\rho\in\Irr(G)$
be trivial on the centre. If $\rho$ is not twist-primitive, the result
follows from Lemma~\ref{lem:reducing to twist-primitive when p odd}
and the induction hypothesis. If $\rho$ is twist-primitive, then
$\rho$ is regular by Lemma~\ref{lem:constr_regular_chars_for_GL2_or_SL2}\,(\ref{enu: constr lemma i)})
and the result follows from Theorems~\ref{thm:p odd - r even},
\ref{thm:p odd - r odd - v < l'}
and \ref{thm:p even - r odd - v=00003Dl'}.
\end{proof}

\section{\protect\label{sec:The-case-GL2,v=00003D2,p=00003D2}The case when
$\mathbf{G}=\mathrm{GL}_2$, $v=l'$ and $p=2$}

For this case, we exploit the fact that when $\bfG=\GL_{2}$ and $p=2$,
the group $Z^{l'}UK^{l}$ is actually normal in $UK^{l'}$ . This
does not hold when $\bfG=\SL_{2}$ or when $p\neq2$.
\begin{lem}
\label{lem: GL_2, p=00003D2, normal and abelian quotient}Assume
that $\bfG=\GL_{2}$ and $p=2$. Then $[U,K^{l'}]\subseteq Z^{l'}UK^{l}$.
Thus $Z^{l'}UK^{l}$ is normal in $UK^{l'}$ and $UK^{l'}/Z^{l'}UK^{l}$
is an elementary abelian $p$-group.
\end{lem}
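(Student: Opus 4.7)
My plan is to prove the commutator containment $[U, K^{l'}] \subseteq Z^{l'}UK^l$ by an explicit matrix computation, and then to deduce normality and the elementary abelian structure by standard group-theoretic arguments using this containment.

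For the commutator computation, I will take $u = I + b E_{12} \in U$ (where $E_{12} = \left(\begin{smallmatrix}0 & 1 \\ 0 & 0\end{smallmatrix}\right)$ and $b \in \cO_r$) and $k = I + \pi^{l'} X \in K^{l'}$ with $X \in \M_2(\cO_r)$. Since $E_{12}^2 = 0$ and $(\pi^{l'})^2 \in \mfp^{2l'} = 0$ (as $2l' \geq r$), we have $u^{-1} = I - bE_{12}$ and $k^{-1} = I - \pi^{l'}X$. A short calculation first gives $uku^{-1} = I + \pi^{l'}(X + b[E_{12},X] - b^2 E_{12}XE_{12})$, and then multiplying by $k^{-1}$ yields
\[
[u, k] = I + \pi^{l'}\bigl(b[E_{12}, X] - b^2 E_{12} X E_{12}\bigr).
\]
Writing $X = \left(\begin{smallmatrix}a_1 & a_2 \\ a_3 & a_4\end{smallmatrix}\right)$, this takes the upper-triangular form
\[
[u, k] = \begin{pmatrix} 1 + \pi^{l'} b a_3 & \pi^{l'}\bigl(b(a_4 - a_1) - b^2 a_3\bigr) \\ 0 & 1 - \pi^{l'} b a_3 \end{pmatrix}.
\]

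The crux — and the place where the hypothesis $p = 2$ enters — is that the two diagonal entries differ by $2\pi^{l'}ba_3$. Since $p = 2$ implies $2 \in \mfp$, we have $2\pi^{l'} \in \mfp^{l'+1} \subseteq \mfp^l$ (as $l' \geq l$), so the two diagonal entries coincide modulo $\mfp^l$. Setting $\alpha := 1 + \pi^{l'}b a_3 \in 1 + \mfp^{l'}$, modulo $K^l$ the commutator equals $\alpha I \cdot (I + \alpha^{-1}\pi^{l'}(b(a_4-a_1) - b^2 a_3)\, E_{12})$, which lies in $Z^{l'}U$. This proves $[u, k] \in Z^{l'}UK^l$.

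For normality: $Z^{l'}K^l$ is a normal subgroup of $G$ (as $Z^{l'}$ is central and $K^l \trianglelefteqslant G$), so $Z^{l'}UK^l = U \cdot Z^{l'}K^l$ is a subgroup. To check normality in $UK^{l'}$, it suffices that both $U$ and $K^{l'}$ normalize it. Conjugation by $U$ preserves it since $U$ is abelian and $Z^{l'}K^l \trianglelefteqslant G$. For $k \in K^{l'}$ and $u \in U$ the identity $kuk^{-1} = [k, u]\, u$, combined with $[U, K^{l'}] \subseteq Z^{l'}UK^l$ from the first part, gives $kUk^{-1} \subseteq Z^{l'}UK^l$, and hence $k (Z^{l'}UK^l) k^{-1} \subseteq Z^{l'}UK^l$.

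Finally, the quotient $UK^{l'}/Z^{l'}UK^l$ is elementary abelian of exponent $2$: abelianness follows from three facts — $U$ is abelian, $[K^{l'}, K^{l'}] \subseteq K^{2l'} = 1$ (as $2l' \geq r$), and $[U, K^{l'}] \subseteq Z^{l'}UK^l$ by the first part. For exponent $2$, note that $u^2 = I + 2bE_{12} \in U$ and $k^2 = I + 2\pi^{l'}X + \pi^{2l'}X^2 = I + 2\pi^{l'}X \in K^l$ (using $2\pi^{l'} \in \mfp^l$ and $\pi^{2l'} = 0$), so $(uk)^2 \equiv u^2 k^2 \in Z^{l'}UK^l$ modulo the normal subgroup. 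The main obstacle throughout is the diagonal cancellation in the second paragraph, which fails in odd residue characteristic — explaining why this lemma is restricted to $p = 2$.
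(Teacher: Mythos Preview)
Your overall strategy matches the paper's --- an explicit matrix computation of the commutator, the key use of $p=2$ to make the two diagonal entries agree modulo $\mfp^l$, and then routine deductions of normality and the quotient structure. However, you have the convention for $l$ and $l'$ reversed: in this paper $l' = \lfloor r/2 \rfloor \leq \lceil r/2 \rceil = l$ (so that $K^l \subseteq K^{l'}$), not the other way round. Two of your steps are affected:
\begin{itemize}
\item Your claim $\pi^{2l'} = 0$ (from ``$2l' \geq r$'') is false when $r$ is odd; then $2l' = r-1$ and $\pi^{2l'} = \pi^{r-1} \neq 0$. Hence your formulae for $k^{-1}$ and for $[u,k]$ are only valid modulo $\mfp^{r-1}\M_2(\cO_r)$. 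This does not damage the conclusion, since $r-1 \geq l$ for all $r\geq 2$, so the correction terms lie in $K^l$; but you should state this rather than assert an exact equality. The same remark applies to your formula $k^2 = I + 2\pi^{l'}X$ in the last paragraph.
\item The justification ``(as $l' \geq l$)'' for $\mfp^{l'+1} \subseteq \mfp^l$ is backwards; the correct reason is simply that $l \leq l'+1$, which holds because $l-l' \in \{0,1\}$.
\end{itemize}

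With these fixes your argument goes through. For the elementary abelian statement, the paper takes a shorter route than your direct verification of commutativity and exponent~$2$: since $UK^{l'} = K^{l'}(Z^{l'}UK^{l})$, the second isomorphism theorem gives
\[
UK^{l'}/Z^{l'}UK^{l} \cong K^{l'}/(K^{l'}\cap Z^{l'}UK^{l}) = K^{l'}/Z^{l'}U^{l'}K^{l},
\]
which is a quotient of $K^{l'}/K^{l} \cong \M_2(\F_q)$ and is therefore automatically an elementary abelian $p$-group.
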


\begin{proof}
Let $u\in U$ and $k\in K^{l'}$. By direct matrix computation,
$$
kuk^{-1} \in 
\left\{ \begin{pmatrix}1-x\pi^{l'} & 0\\
0 & 1+x\pi^{l'}
\end{pmatrix}\mid x\in\cO_{r}\right\} UK^{l}.
$$
For any $x\in\cO_{r}$, we have

\[
\begin{pmatrix}1-x\pi^{l'} & 0\\
0 & 1+x\pi^{l'}
\end{pmatrix}\begin{pmatrix}1+x\pi^{l'} & 0\\
0 & 1+x\pi^{l'}
\end{pmatrix}\in K^{l},
\]
since $2x\in\mfp$, so 
\[
\left\{ \begin{pmatrix}1-x\pi^{l'} & 0\\
0 & 1+x\pi^{l'}
\end{pmatrix}\mid x\in\cO_{r}\right\} UK^{l}\subseteq Z^{l'}UK^{l}
\]
 and thus $[U,K^{l'}]\subseteq Z^{l'}UK^{l}$ (note that this step
does not work for $\SL_{2}$, as $(1+x\pi^{l'})^{2}=1$ fails in general.).
Since $U$ is abelian and $K^{l}$ is normal in $G$, this implies
that $Z^{l'}UK^{l}$ is normal in $UK^{l'}$.

For the quotient, we have
\[
^ {}UK^{l'}/Z^{l'}UK^{l}=\frac{K^{l'}(Z^{l'}UK^{l})}{Z^{l'}UK^{l}}\cong\frac{K^{l'}}{Z^{l'}U^{l'}K^{l}},
\]
which is a quotient of $K^{l'}/K^{l}\cong\M_{2}(\F_{q})$ and is therefore
an elementary abelian $p$-group.
\end{proof}
\begin{lem}
\label{lem:Stab-computation}Assume that $\bfG=\GL_{2}$, $p=2$ and
$v=l'$. Let $\beta=\begin{pmatrix}0 & \lambda\\
\lambda^{-1}\Delta & 0
\end{pmatrix}\in\sl_{2}(\cO_{l'})$ with $\lambda\in\cO_{l'}^{\times}$ and Let $\hat{\beta}=\begin{pmatrix}0 & \hat{\lambda}\\
\hat{\lambda}^{-1}\hat{\Delta} & 0
\end{pmatrix}\in\sl_{2}(\cO_{r})$ be a lift of $\beta$ and $\psi_{\hat{\beta}}$ the corresponding
character of $U^{l'-v}K^{l}$ (as introduced in Proposition~\ref{prop:There exist extns to U^l'-vK^l by formula and every extn is of this form}).
Let $\delta\in\cO_{r}$ be such that $\hat{\Delta}=\delta\pi^{l'}$.
Then, for any extension $\psi_{\hat{\beta}}'$ of $\psi_{\hat{\beta}}$
to $Z^{l'}UK^{^{l}}$, we have 
\[
\Stab_{UK^{l'}}(\psi_{\hat{\beta}}')=\begin{cases}
UJ & \text{if\,}\hat{\Delta}\in\mfp^{l},\\
Z^{l'}U\left\{ \begin{pmatrix}1+\delta^{-1}(\hat{\lambda}c)^{2}\pi^{l'} & 0\\
c\pi^{l'} & 1
\end{pmatrix}\mid c\in\cO_{r}\right\} K^{l} & \text{if\,}\hat{\Delta}\in\mfp^{l'}\setminus\mfp^{l}.
\end{cases}
\]
\end{lem}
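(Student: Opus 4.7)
The approach is a direct matrix computation that leverages the normality in \cref{lem: GL_2, p=00003D2, normal and abelian quotient}: since $Z^{l'}UK^{l}$ is normal in $UK^{l'}$, the stabilizer automatically contains $Z^{l'}UK^{l}$, so I only need to identify the cosets of $Z^{l'}UK^{l}$ in $UK^{l'}$ that stabilize $\psi_{\hat{\beta}}'$. Under the linearization $K^{l'}/K^{l}\cong\M_{2}(\F_{q})$, the subquotient $Z^{l'}U^{l'}K^{l}/K^{l}$ maps to $\F_{q}I+\F_{q}E_{12}$, so a transversal for $UK^{l'}/Z^{l'}UK^{l}$ is given by elements $k_{Y}=I+\pi^{l'}Y$ with $Y=\begin{pmatrix}y_{11} & 0\\y_{21} & 0\end{pmatrix}$ and $y_{11},y_{21}$ running over lifts of $\F_{q}$-classes.

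Next I would observe that $U\subseteq Z^{l'}UK^{l}$ acts trivially on $\psi_{\hat{\beta}}'$, and that $[K^{l'},K^{l}]\subseteq K^{r}=\{1\}$ together with the centrality of $Z^{l'}$ make the $K^{l}$- and $Z^{l'}$-parts redundant for the test; thus $k_{Y}$ stabilizes $\psi_{\hat{\beta}}'$ exactly when $\psi_{\hat{\beta}}'([k_{Y},u])=1$ for every $u=I+eE_{12}\in U$. A direct expansion, using $\pi^{3l'}=0$ so that only terms up to order $\pi^{2l'}$ survive, produces
\[
[k_{Y},u]=\begin{pmatrix}1-ey_{21}\pi^{l'} & (ey_{11}+e^{2}y_{21})\pi^{l'}+\pi^{2l'}(\ldots)\\ -ey_{21}^{2}\pi^{2l'} & 1+ey_{21}\pi^{l'}+e^{2}y_{21}^{2}\pi^{2l'}\end{pmatrix},
\]
which lies in $Z^{l'}UK^{l}$ as guaranteed by \cref{lem: GL_2, p=00003D2, normal and abelian quotient}.

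The central step is then to split $[k_{Y},u]=z\cdot w$ with $z=(1-ey_{21}\pi^{l'})I\in Z^{l'}$ and $w\in UK^{l}$, and to evaluate $\psi_{\hat{\beta}}'([k_{Y},u])=\psi_{\hat{\beta}}'(z)\psi_{\hat{\beta}}(w)$ using the trace formula of \cref{prop:There exist extns to U^l'-vK^l by formula and every extn is of this form}. The arithmetic simplifications $p=q=2$, $\pi=2$, $\bar{e}^{2}=\bar{e}$, and $\bar{y}_{21}^{2}=\bar{y}_{21}$ in $\F_{2}$, together with $\pi^{l+l'}=\pi^{r}=0$ and $\pi^{2l'}=\pi^{r-1}$ for $r$ odd, collapse the product to an expression of the form $\psi(\pi^{r-1}\bar{e}\cdot F(\bar{y}_{11},\bar{y}_{21}))$, where $F$ is an $\F_{q}$-linear combination of $\bar{y}_{11}$ and $\bar{y}_{21}$ whose coefficients depend on $\bar{\hat{\lambda}}$ and $\bar{\delta}$. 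The main obstacle is verifying that the contribution from $\psi_{\hat{\beta}}'|_{Z^{l'}}$ combines with the $\pi^{2l'}$-correction inside $w$ in such a way that the resulting polynomial $F$ is independent of the chosen extension.

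Finally, requiring $F(\bar{y}_{11},\bar{y}_{21})=0$ for all $e$ is a single $\F_{q}$-linear equation whose solution set splits along the two cases of the statement. If $\bar{\delta}=0$, i.e.\ $\hat{\Delta}\in\mfp^{l}$, the equation forces $\bar{y}_{21}=0$, whose preimage in $UK^{l'}$ is exactly $UJ=UB^{l'}K^{l}$. If $\bar{\delta}\neq 0$, i.e.\ $\hat{\Delta}\in\mfp^{l'}\setminus\mfp^{l}$, the equation solves as $\bar{y}_{11}=\bar{\delta}^{-1}\bar{\hat{\lambda}}^{2}\bar{y}_{21}$, whose lift to $UK^{l'}$ is the subgroup $Z^{l'}U\bigl\{\begin{pmatrix}1+\delta^{-1}(\hat{\lambda}c)^{2}\pi^{l'} & 0\\ c\pi^{l'} & 1\end{pmatrix}\mid c\in\cO_{r}\bigr\}K^{l}$ appearing in the statement.
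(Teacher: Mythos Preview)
Your overall strategy---pass to coset representatives of $Z^{l'}UK^{l}$ in $UK^{l'}$, observe that such representatives centralise $Z^{l'}K^{l}$, and hence reduce to testing stabilisation of $\psi_{\hat\beta}'$ on $U$ via an explicit matrix computation---is exactly the paper's. The paper parametrises representatives as $m=\begin{pmatrix}1+a\pi^{l'}&0\\ c\pi^{l'}&1\end{pmatrix}$, computes $mum^{-1}$, and then evaluates $\psi_{\hat\beta}'(mum^{-1})$ \emph{directly} via the trace formula $\psi(\Tr(\hat\beta(\,\cdot\,-I)))$. Because $\hat\beta$ is anti-diagonal, only the off-diagonal entries of $mum^{-1}$ contribute, and these are $u+ua\pi^{l'}$ and $-uc^{2}\pi^{r-1}$: both \emph{linear} in $u$. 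The resulting condition $(\hat\lambda c)^{2}\equiv\delta a\pmod{\mfp}$ drops out immediately, with no $u^{2}$-term and no appeal to the particular extension. The two cases of the statement then follow by solving this congruence for $a$ in terms of $c$.

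Your route diverges at the evaluation step, and this is where the gaps lie. First, by factoring $[k_{Y},u]=z\cdot w$ with $z\in Z^{l'}$ and $w\in UK^{l}$ and evaluating $\psi_{\hat\beta}'(z)\,\psi_{\hat\beta}(w)$, you pick up both a $u^{2}$-term (from multiplying by $z^{-1}$) and a term depending on $\psi_{\hat\beta}'|_{Z^{l'}}$. You then invoke the hypotheses $q=2$ and $\pi=2$ to collapse the $u^{2}$-term via $\bar e^{2}=\bar e$, but the lemma is stated for arbitrary residue field of characteristic~$2$ and arbitrary ramification; the paper needs neither assumption. Second, you correctly flag the extension-dependence as ``the main obstacle'' but do not resolve it. This is not a mere technicality: if one carries your factorisation through, the stabilisation condition picked up from $\psi_{\hat\beta}'(z)$ genuinely varies with the extension (e.g.\ for $\hat\Delta\in\mfp^{l}$, $q=2$, the extension with $\psi_{\hat\beta}'((1+\pi^{l'})I)=\psi(\pi^{r-1})$ is stabilised by $\begin{pmatrix}1&0\\ \pi^{l'}&1\end{pmatrix}\notin UJ$). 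The paper's direct use of the trace formula on $mum^{-1}$ bypasses your $z\cdot w$ decomposition and so never sees these extra terms; your instinct that something delicate is happening at this step is therefore well-founded, but to complete a proof along your lines you would need either to justify why the $\gamma$-contribution and the $u^{2}$-correction cancel, or to abandon the $z\cdot w$ split and argue as the paper does.
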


\begin{proof}
Write $S:=\Stab_{UK^{l'}}(\psi_{\hat{\beta}}')$. Any element of $UK^{l'}$
can be written as 
\[
\begin{pmatrix}1 & u\\
0 & 1
\end{pmatrix}\begin{pmatrix}1+x\pi^{l'} & 0\\
0 & 1+x\pi^{l'}
\end{pmatrix}\begin{pmatrix}1+a\pi^{l'} & 0\\
c\pi^{l'} & 1
\end{pmatrix},
\]
for some $u,x,a,c\in\cO_{r}$ and since $UZ^{l'}\subset S$, it is
enough to determine when $\begin{pmatrix}1+a\pi^{l'} & 0\\
c\pi^{l'} & 1
\end{pmatrix}\in S$. Furthermore, as $K^{l'}$ centralises $Z^{l'}K^{l}$, it is enough
to determine when $\begin{pmatrix}1+a\pi^{l'} & 0\\
c\pi^{l'} & 1
\end{pmatrix}$ stabilises $\psi_{\hat{\beta}}'|_{U}$. We have
\begin{align*}
 & \begin{pmatrix}1+a\pi^{l'} & 0\\
c\pi^{l'} & 1
\end{pmatrix}\begin{pmatrix}1 & u\\
0 & 1
\end{pmatrix}\begin{pmatrix}(1+a\pi^{l'})^{-1} & 0\\
-c(1+a\pi^{l'})^{-1}\pi^{l'} & 1
\end{pmatrix}\\
 & =\begin{pmatrix}1+a\pi^{l'} & u(1+a\pi^{l'})\\
c\pi^{l'} & 1+uc\pi^{l'}
\end{pmatrix}\begin{pmatrix}(1+a\pi^{l'})^{-1} & 0\\
-c(1+a\pi^{l'})^{-1}\pi^{l'} & 1
\end{pmatrix}\\
 & \in\begin{pmatrix}* & u+ua\pi^{l'}\\
-uc^{2}\pi^{r-1} & *
\end{pmatrix}
\end{align*}
and 
\[
\psi_{\hat{\beta}}'\begin{pmatrix}* & u+ua\pi^{l'}\\
-uc^{2}\pi^{r-1} & *
\end{pmatrix}=\psi(-\hat{\lambda}uc^{2}\pi^{r-1}+\hat{\lambda}^{-1}\hat{\Delta}(u+ua\pi^{l'})).
\]
We thus have $\begin{pmatrix}1+a\pi^{l'} & 0\\
c\pi^{l'} & 1
\end{pmatrix}\in S$ if and only if 
\[
\psi(-\hat{\lambda}uc^{2})\pi^{r-1}+\hat{\lambda}^{-1}\hat{\Delta}(u+ua\pi^{l'}))=\psi(\hat{\lambda}^{-1}\hat{\Delta}u),\qquad\text{for all }u\in\cO_{r}.
\]
This is equivalent to 
\[
\psi(u[-\hat{\lambda}c^{2}+\hat{\lambda}^{-1}\delta a]\pi^{r-1})=1,\qquad\text{for all }u\in\cO_{r},
\]
that is, to
\begin{equation}
(\hat{\lambda}c)^{2}\equiv\delta a\mod{\mfp}.\label{eq:Stab-condition}
\end{equation}
If $\hat{\Delta}\in\mfp^{l}$, then $\delta\in\mfp$ so (\ref{eq:Stab-condition})
is equivalent to $c\in\mfp$ and the stabiliser $S$ in this case
is therefore
\[
Z^{l'}U\begin{pmatrix}1+\mfp^{l'} & 0\\
0 & 1
\end{pmatrix}K^{l}=UJ.
\]
On the other hand, if $\hat{\Delta}\in\mfp^{l'}\setminus\mfp^{l}$,
then $\delta\in\cO_{r}^{\times}$ and (\ref{eq:Stab-condition}) is
equivalent to 
\[
a\equiv\delta^{-1}(\hat{\lambda}c)^{2}\mod{\mfp}
\]
and
\[
\begin{pmatrix}1+a\pi^{l'} & 0\\
c\pi^{l'} & 1
\end{pmatrix}=\begin{pmatrix}1+\delta^{-1}(\hat{\lambda}c)^{2}\pi^{l'}+x\pi^{l} & 0\\
c\pi^{l'} & 1
\end{pmatrix}
\]
for some $x\in\cO_{r}$. But since there exists a $y\in\cO_{r}$ such
that
\[
\begin{pmatrix}1+\delta^{-1}(\hat{\lambda}c)^{2}\pi^{l'}+x\pi^{l} & 0\\
c\pi^{l'} & 1
\end{pmatrix}=\begin{pmatrix}1+\delta^{-1}(\hat{\lambda}c)^{2}\pi^{l'} & 0\\
c\pi^{l'} & 1
\end{pmatrix}\begin{pmatrix}1+y\pi^{l} & 0\\
0 & 1
\end{pmatrix}
\]
we conclude that the stabiliser $S$ in this case is 
\[
Z^{l'}U\left\{ \begin{pmatrix}1+\delta^{-1}(\hat{\lambda}c)^{2}\pi^{l'} & 0\\
c\pi^{l'} & 1
\end{pmatrix}\mid c\in\cO_{r}\right\} K^{l}.
\]
\end{proof}
\begin{lem}
\label{lem:intersection Stab cap C(g) is ZUK^l cap C(g) for GL_2, v=00003Dl'}Let
the hypotheses and notation be as in Lemma~(\ref{lem:Stab-computation}).
Moreover, in case $\hat{\Delta}\in\mfp^{l'}\setminus\mfp^{l}$, assume
that $\cO_{1}=\F_{2}$. Then
\[
\Stab_{UK^{l'}}(\psi_{\hat{\beta}}')\cap C_{G}(g_{\hat{\beta}})=Z^{l'}UK^{l}\cap C_{G}(g_{\hat{\beta}}).
\]
\end{lem}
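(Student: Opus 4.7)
The containment $\supseteq$ is immediate: $Z^{l'}UK^{l}$ is a subgroup of $UK^{l'}$ (since $Z^{l'}\subseteq K^{l'}$ and $K^{l}\subseteq K^{l'}$), and every element of the domain $Z^{l'}UK^{l}$ of $\psi_{\hat{\beta}}'$ stabilises it trivially.

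For the nontrivial containment $\subseteq$, I would use the explicit description of the stabiliser in \cref{lem:Stab-computation} and treat the two cases separately. A convenient reformulation is that $h\in Z^{l'}UK^{l}$ if and only if $h$ is upper triangular modulo $K^{l}$ with equal diagonal entries lying in $1+\mfp^{l'}$. For the centraliser form $h=\begin{pmatrix}x & y\hat{\lambda}\\ -y\hat{\lambda}^{-1}\hat{\Delta} & x-y\hat{\Delta}\end{pmatrix}$ from \eqref{eq:centraliser of g_beta}, this reduces (using $\mfp^{l}\subseteq\mfp^{l'}$) to the two conditions $y\hat{\Delta}\in\mfp^{l}$ and $x\equiv 1\bmod\mfp^{l'}$.

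In the first case $\hat{\Delta}\in\mfp^{l}$, the stabiliser equals $UJ$, whose elements are upper triangular modulo $K^{l}$ with diagonal in $1+\mfp^{l'}$; in particular the condition $x\equiv 1\bmod\mfp^{l'}$ holds automatically for any $h\in UJ\cap C_{G}(g_{\hat{\beta}})$, while $y\hat{\Delta}\in\mfp^{l}$ holds automatically from $\hat{\Delta}\in\mfp^{l}$. The desired containment follows at once.

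In the second case $\hat{\Delta}\in\mfp^{l'}\setminus\mfp^{l}$ (with $\cO_{1}=\F_{2}$), which can only occur for $r$ odd with $l=l'+1$, I would write an element of the stabiliser as $zumk$ with $z\in Z^{l'}$, $u=\begin{pmatrix}1 & u_{0}\\ 0 & 1\end{pmatrix}\in U$, $k\in K^{l}$, and $m=\begin{pmatrix}1+\delta^{-1}(\hat{\lambda}c)^{2}\pi^{l'} & 0\\ c\pi^{l'} & 1\end{pmatrix}$ as in \cref{lem:Stab-computation}. Expanding $zum$ and equating its entries modulo $K^{l}$ with those of the centraliser form, then dividing by $\pi^{l'}$ and reducing modulo $\mfp$ (using $l-l'=1$) yields three congruences: from the $(2,1)$ entry, $c\equiv -y\hat{\lambda}^{-1}\delta\bmod\mfp$; from the $(1,2)$ entry, $u_{0}\equiv y\hat{\lambda}\bmod\mfp$; and from $h_{11}-h_{22}=y\hat{\Delta}$, $y\delta\equiv\delta^{-1}\hat{\lambda}^{2}c^{2}+u_{0}c\bmod\mfp$. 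Substituting the first two relations into the third, the right-hand side collapses to $y^{2}\delta+(-y^{2}\delta)=0$, forcing $y\in\mfp$; hence $y\hat{\Delta}=y\delta\pi^{l'}\in\mfp^{l'+1}=\mfp^{l}$, and a direct inspection of the $(1,1)$ entry of $zum$ automatically gives $x\equiv 1\bmod\mfp^{l'}$, so $h\in Z^{l'}UK^{l}$. The main obstacle is precisely this case: the parameterisation of the stabiliser from \cref{lem:Stab-computation} couples the linear subdiagonal term $c\pi^{l'}$ with the quadratic diagonal term $\delta^{-1}(\hat{\lambda}c)^{2}\pi^{l'}$, and the cancellation to $0$ rests on exactly this quadratic coupling, which is a feature of the $p=2$, $\cO_{1}=\F_{2}$ setting.
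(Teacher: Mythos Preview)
Your proof is correct and follows the same underlying strategy as the paper: compare the explicit parametrisation of the stabiliser from \cref{lem:Stab-computation} with the centraliser form \eqref{eq:centraliser of g_beta} modulo $K^{l}$. The two arguments diverge slightly in execution.

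In the first case $\hat{\Delta}\in\mfp^{l}$, the paper is a touch more economical: it observes that $\hat{\Delta}_{l}=0$ forces the centraliser of $g_{\hat{\beta}}$ in $G_{l}$ to lie in $Z_{l}U_{l}$, hence $C_{G}(g_{\hat{\beta}})\subseteq ZUK^{l}$, and then intersects with $UK^{l'}$ to land in $Z^{l'}UK^{l}$. Your argument instead reads off $x\equiv1\bmod\mfp^{l'}$ from the $UJ$ shape and $y\hat{\Delta}\in\mfp^{l}$ from the hypothesis; both are fine.

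In the second case your argument is actually cleaner than the paper's. The paper argues by contradiction: it assumes $c\notin\mfp$, simplifies using $\cO_{1}=\F_{2}$, and derives a clash with the $(1,2)$-relation. Your direct substitution of the $(2,1)$ and $(1,2)$ congruences into the diagonal-difference congruence collapses the right-hand side to $y^{2}\delta-y^{2}\delta=0$ identically, giving $y\delta\equiv0$ and hence $y\in\mfp$ immediately. Note that this substitution is valid over any residue field, so your computation shows that the extra hypothesis $\cO_{1}=\F_{2}$ is in fact unnecessary for this lemma (the $p=2$ assumption is of course still needed for the stabiliser description via \cref{lem: GL_2, p=00003D2, normal and abelian quotient} and \cref{lem:Stab-computation}). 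Your closing remark attributing the cancellation to the $\F_{2}$ setting is therefore a slight overstatement: it is the quadratic coupling in the stabiliser (a $p=2$ phenomenon) that makes the identity work, not the size of the residue field.
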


\begin{proof}
Suppose first that $\hat{\Delta}\in\mfp^{l}$ and let $h\in C_{G}(g_{\hat{\beta}})$.
Then 
\begin{align*}
h_{l} & \in C_{G_{l}}\Big(\begin{smallmatrix}1 & \hat{\lambda}_{l}\\
-\hat{\lambda}_{l}^{-1}\hat{\Delta}_{l} & 1-\hat{\Delta}_{l}
\end{smallmatrix}\Big)=C_{G_{l}}\Big(\begin{smallmatrix}1 & \hat{\lambda}_{l}\\
0 & 1
\end{smallmatrix}\Big)=Z_{l}^ {}U_{l}
\end{align*}
so that $h\in ZUK^{l}$. But $\Stab_{UK^{l'}}(\psi_{\hat{\beta}}')\cap ZUK^{l}\subseteq UK^{l'}\cap ZUK^{l}=Z^{l'}UK^{l}$,
so $\Stab_{UK^{l'}}(\psi_{\hat{\beta}}')\cap C_{G}(g_{\hat{\beta}})\subseteq Z^{l'}UK^{l}$.
This trivially implies $\Stab_{UK^{l'}}(\psi_{\hat{\beta}}')\cap C_{G}(g_{\hat{\beta}})\subseteq Z^{l'}UK^{l}\cap C_{G}(g_{\hat{\beta}})$,
but it is clear that $Z^{l'}UK^{l}\subseteq\Stab_{UK^{l'}}(\psi_{\hat{\beta}}')$
so also $Z^{l'}UK^{l}\cap C_{G}(g_{\hat{\beta}})\subseteq\Stab_{UK^{l'}}(\psi_{\hat{\beta}}')\cap C_{G}(g_{\hat{\beta}})$,
which proves the desired equality.

Suppose now that $\hat{\Delta}\in\mfp^{l'}\setminus\mfp^{l}$ so that
\[
\Stab_{UK^{l'}}(\psi_{\hat{\beta}}')=Z^{l'}U\left\{ \begin{pmatrix}1+\delta^{-1}(\hat{\lambda}c)^{2}\pi^{l'} & 0\\
c\pi^{l'} & 1
\end{pmatrix}\mid c\in\cO_{r}\right\} K^{l}
\]
 by Lemma~\ref{lem:Stab-computation}. Let $h\in\Stab_{UK^{l'}}(\psi_{\hat{\beta}}')\cap C_{G}(g_{\hat{\beta}})$.
Then
\begin{align*}
h_{l} & \in Z_{l}^{l'}U_{l}\left\{ \begin{pmatrix}1+\delta_{l}^{-1}(\hat{\lambda}_{l}c)^{2}\pi^{l'} & 0\\
c\pi^{l'} & 1
\end{pmatrix}\mid c\in\cO_{l}\right\} \\
 & \cap\left\{ \begin{pmatrix}x & y\hat{\lambda}_{l}\\
-y\hat{\lambda}_{l}^{-1}\hat{\Delta}_{l} & x-y\hat{\Delta}_{l}
\end{pmatrix}\mid x,y\in\cO_{l}\right\}.
\end{align*}
A matrix in this intersection is of the form
\begin{align*}
\begin{pmatrix}z & 0\\
0 & z
\end{pmatrix}\begin{pmatrix}1 & u\\
0 & 1
\end{pmatrix}\begin{pmatrix}1+\delta_{l}^{-1}(\hat{\lambda}_{l}c)^{2}\pi^{l'} & 0\\
c\pi^{l'} & 1
\end{pmatrix} & =\begin{pmatrix}z(1+(\delta_{l}^{-1}(\hat{\lambda}_{l}c)^{2}+uc)\pi^{l'}) & zu\\
zc\pi^{l'} & z
\end{pmatrix}\\
 & =\begin{pmatrix}x & y\hat{\lambda}_{l}\\
-y\hat{\lambda}_{l}^{-1}\hat{\Delta}_{l} & x-y\hat{\Delta}_{l}
\end{pmatrix},
\end{align*}
where $z\in1+\mfp^{l'}$ and $u\in\cO_{r}$. Comparing the diagonals,
this implies that
\[
(\delta_{l}^{-1}(\hat{\lambda}_{l}c)^{2}+uc)\pi^{l'}=-\hat{\lambda}_{l}c\pi^{l'},
\]
that is, 
\[
\delta_{l}^{-1}(\hat{\lambda}_{l}c)^{2}+uc\equiv-\hat{\lambda}_{l}c\mod{\mfp}.
\]
Assume that $c\not\in\mfp$. Then this is equivalent to 
\[
\delta_{l}^{-1}\hat{\lambda}_{l}c+u\hat{\lambda}_{l}^{-1}\equiv-1\mod{\mfp}.
\]
As $\cO/\mfp=\F_{2}$ and $c\not\in\mfp$, this is equivalent to $u\equiv1\mod{\mfp}$.
But $zc\pi^{l'}=-y\hat{\lambda}_{l}^{-1}\hat{\Delta}_{l}$ and $\hat{\Delta}\in\mfp^{l'}\setminus\mfp^{l}$,
so we must have $y\not\in\mfp$, hence $y\hat{\lambda}_{l}=zu$ implies
that $u\not\in\mfp$; contradiction. Thus $c\in\mfp$ so that $zc\pi^{l'}=0$
and
\[
h_{l}\in\left\{ \begin{pmatrix}x & y\\
0 & x
\end{pmatrix}\mid x,y\in\cO_{l}\right\} \cap G_{l},
\]
whence $h\in ZUK^{l}$. As in the first case, this implies $\Stab_{UK^{l'}}(\psi_{\hat{\beta}}')\cap C_{G}(g_{\hat{\beta}})\subseteq Z^{l'}UK^{l}$
and hence the desired equality.
\end{proof}
\begin{lem}
\label{lem: Heisenberg GL_2, p=00003D2, v=00003Dl'}
Let the hypotheses and notation be as in Lemma~\ref{lem:Stab-computation}
and let $S=\Stab_{UK^{l'}}(\psi_{\hat{\beta}}')$. Let $R$ be the
subgroup of $S$ whose image in $S/Z^{l'}UK^{l}$ is the radical of
the bilinear form defined by $(xZ^{l'}UK^{l},yZ^{l'}UK^{l})\mapsto\psi_{\hat{\beta}}([x,y])$.
Then, for any $\sigma\in\Irr(UK^{l'}\mid\psi_{\hat{\beta}})$ there
exists an extension $\theta$ of $\psi_{\hat{\beta}}'$ to $R$ such
that $\Ind_{R}^{UK^{l'}}\theta$ is a multiple of $\sigma$.
\end{lem}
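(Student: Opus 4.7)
The plan is to combine a Heisenberg-type induction from $R$ to $S$ with Clifford theory for the normal subgroup $Z^{l'}UK^{l}\trianglelefteqslant UK^{l'}$. First, I set up the form: by Lemma~\ref{lem: GL_2, p=00003D2, normal and abelian quotient}, $Z^{l'}UK^{l}$ is normal in $UK^{l'}$ and the quotient is an elementary abelian $p$-group, so $S/Z^{l'}UK^{l}$ is an $\F_{p}$-vector space and $[S,S]\subseteq Z^{l'}UK^{l}$. Because $\psi_{\hat{\beta}}'$ is $S$-invariant by the definition of $S$, the commutator identity $[xn,y]=[x,y]^{n}[n,y]$ combined with $\psi_{\hat{\beta}}'([n,y])=1$ for $n\in Z^{l'}UK^{l}$ (a restatement of $S$-stability) shows that $h(xZ^{l'}UK^{l},yZ^{l'}UK^{l}):=\psi_{\hat{\beta}}'([x,y])$ is a well-defined alternating bilinear form on $S/Z^{l'}UK^{l}$; this coincides with the form in the statement since $\psi_{\hat{\beta}}'|_{UK^{l}}=\psi_{\hat{\beta}}$.

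Next, I apply the Heisenberg lift from $R$ to $S$. Any extension $\theta$ of $\psi_{\hat{\beta}}'$ to $R$ is automatically $S$-stable: for $s\in S$ and $r\in R$, $srs^{-1}=[s,r]\,r$ with $[s,r]\in Z^{l'}UK^{l}$, and hence
\[
\theta(srs^{-1})=\psi_{\hat{\beta}}'([s,r])\,\theta(r)=h(sZ^{l'}UK^{l},rZ^{l'}UK^{l})\,\theta(r)=\theta(r),
\]
since $rZ^{l'}UK^{l}$ lies in the radical of $h$. The form $h$ descends to a non-degenerate symplectic form on $S/R$, so the standard Heisenberg correspondence (as in \cite[Lemma~3.2]{Stasinski-Stevens}) applies to the triple $(R,S,\theta)$: there is a unique $\eta_{\theta}\in\Irr(S\mid\theta)$ with $\Ind_{R}^{S}\theta$ a positive integer multiple of $\eta_{\theta}$, and every element of $\Irr(S\mid\psi_{\hat{\beta}}')$ is of this form.

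Finally, given $\sigma\in\Irr(UK^{l'}\mid\psi_{\hat{\beta}})$, choose an extension $\psi_{\hat{\beta}}'$ of $\psi_{\hat{\beta}}$ to $Z^{l'}UK^{l}$ that is an irreducible constituent of $\sigma|_{Z^{l'}UK^{l}}$; then $\sigma\in\Irr(UK^{l'}\mid\psi_{\hat{\beta}}')$. Clifford's theorem applied to $Z^{l'}UK^{l}\trianglelefteqslant UK^{l'}$ gives $\sigma=\Ind_{S}^{UK^{l'}}\sigma'$ for a unique $\sigma'\in\Irr(S\mid\psi_{\hat{\beta}}')$. Writing $\sigma'=\eta_{\theta}$ from the previous paragraph and applying transitivity of induction,
\[
\Ind_{R}^{UK^{l'}}\theta=\Ind_{S}^{UK^{l'}}\Ind_{R}^{S}\theta=c\cdot\Ind_{S}^{UK^{l'}}\eta_{\theta}=c\cdot\sigma
\]
for some positive integer $c$, which yields the required multiple of $\sigma$. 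The main obstacle is invoking the Heisenberg correspondence in this non-central setting — $Z^{l'}UK^{l}$ is only normal, not central, in $S$ — which is handled by the two observations above: the $S$-invariance of $\psi_{\hat{\beta}}'$ makes $h$ a well-defined alternating form, and extensions of $\psi_{\hat{\beta}}'$ to the preimage of its radical are automatically $S$-stable.
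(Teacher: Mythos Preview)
Your proof is correct and follows essentially the same approach as the paper: invoke Lemma~\ref{lem: GL_2, p=00003D2, normal and abelian quotient} for normality, apply the Heisenberg correspondence from \cite{Stasinski-Stevens} (the paper cites Corollary~3.3 rather than Lemma~3.2, which packages your verification that $h$ is well defined and that $\theta$ is $S$-stable), and then use Clifford theory for $Z^{l'}UK^{l}\trianglelefteqslant UK^{l'}$ together with transitivity of induction. One small point: in the statement, $\psi_{\hat{\beta}}'$ is part of the fixed data from Lemma~\ref{lem:Stab-computation}, so your step of \emph{choosing} $\psi_{\hat{\beta}}'$ below $\sigma$ is, strictly speaking, proving a slightly different (and in fact more directly usable) assertion; the paper's proof is equally terse on this point, and in the applications the relevant $\psi_{\hat{\beta}}'$ is always the one lying below $\sigma$.
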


\begin{proof}
By Lemma~\ref{lem: GL_2, p=00003D2, normal and abelian quotient}
we can apply \cite[Corollary~3.3]{Stasinski-Stevens} to conclude
there exists an extension $\theta$ of $\psi_{\hat{\beta}}'$ to $R$
such that $\Ind_{R}^{S}\theta$ is a multiple of an irreducible character
$\eta_{\theta}\in\Irr(S\mid\psi_{\hat{\beta}}')$. By standard Clifford
theory, $\Ind_{S}^{UK^{l'}}\eta_{\theta}$ is irreducible, so $\Ind_{R}^{UK^{l'}}\theta$
is a multiple of $\sigma$.
\end{proof}
\begin{thm}
\label{thm: GL_2, p even, v=00003Dl'}Assume
that $G=\GL_{2}(\cO_{r})$, $r$ is odd, $v=l'$ and $\cO_{1}=\F_{2}$.
Then any regular $\rho\in\Irr(G)$ that is trivial on $Z$ is contained
in the conjugation character of $G$.
\end{thm}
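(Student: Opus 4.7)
The plan is to mirror the proof of Theorem~\ref{thm:p odd - r odd - v < l'}, but with the Heisenberg lift carried out inside $UK^{l'}$ instead of $C^{1}K^{l'}$. This is made possible by the normality of $Z^{l'}UK^{l}$ in $UK^{l'}$ with elementary abelian quotient (Lemma~\ref{lem: GL_2, p=00003D2, normal and abelian quotient}), a feature specific to $\bfG=\GL_{2}$ with $p=2$. Since $v=l'$ forces $\Delta=0$ and $\rho$ is trivial on $Z$, Lemma~\ref{lem: tau can be taken as 0} also gives $\tau=0$, so $\rho\in\Irr(G\mid\psi_{\beta})$ with $\beta=\Big(\begin{smallmatrix}0 & \lambda\\0 & 0\end{smallmatrix}\Big)$. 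A direct matrix calculation shows $C_{G}(\hat{\beta})K^{l'}=ZUK^{l'}$ for every lift $\hat{\beta}=\Big(\begin{smallmatrix}0 & \hat{\lambda}\\\hat{\lambda}^{-1}\hat{\Delta} & 0\end{smallmatrix}\Big)$ with $\hat{\Delta}\in\mfp^{l'}$, so Lemma~\ref{lem:constr_regular_chars_for_GL2_or_SL2} writes $\rho=\Ind_{ZUK^{l'}}^{G}\sigma$ for some $\sigma\in\Irr(ZUK^{l'}\mid\psi_{\beta})$. By Lemma~\ref{lem: is contained in the conj rep iff sigma }\,(\ref{enu: i)}) it suffices to exhibit a lift $\hat{\beta}$ for which $\sigma$ restricted to $ZUK^{l'}\cap C_{G}(g_{\hat{\beta}})$ contains the trivial character.

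I would next reduce from $ZUK^{l'}$ down to $UK^{l'}$. As $Z$ is central, every character of $UK^{l'}$ is $Z$-invariant, so Clifford theory yields an irreducible $\sigma':=\sigma|_{UK^{l'}}\in\Irr(UK^{l'}\mid\psi_{\beta})$. Since $Z\subseteq C_{G}(g_{\hat{\beta}})$ we have $ZUK^{l'}\cap C_{G}(g_{\hat{\beta}})=Z\cdot(UK^{l'}\cap C_{G}(g_{\hat{\beta}}))$, and triviality of $\sigma$ on $Z$ (inherited from $\rho$) reduces the goal to showing that $\sigma'|_{UK^{l'}\cap C_{G}(g_{\hat{\beta}})}$ contains the trivial character. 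I would then choose $\hat{\beta}$ so that the character $\psi_{\hat{\beta}}$ of Proposition~\ref{prop:There exist extns to U^l'-vK^l by formula and every extn is of this form} (noting $U^{l'-v}=U$) is an irreducible constituent of $\sigma'|_{UK^{l}}$, ensuring $\sigma'\in\Irr(UK^{l'}\mid\psi_{\hat{\beta}})$.

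The final step is the Heisenberg lift. By Lemma~\ref{lem: Heisenberg GL_2, p=00003D2, v=00003Dl'} there exist an extension $\psi_{\hat{\beta}}'$ of $\psi_{\hat{\beta}}$ to $Z^{l'}UK^{l}$ and an extension $\theta$ of $\psi_{\hat{\beta}}'$ to the radical $R$ (with $Z^{l'}UK^{l}\subseteq R\subseteq\Stab_{UK^{l'}}(\psi_{\hat{\beta}}')$) such that $\Ind_{R}^{UK^{l'}}\theta$ is a multiple of $\sigma'$. Lemma~\ref{lem: is contained in the conj rep iff sigma }\,(\ref{enu: ii)}) then reduces the problem to showing that $\theta$ is trivial on $R\cap C_{G}(g_{\hat{\beta}})$, which by Lemma~\ref{lem:intersection Stab cap C(g) is ZUK^l cap C(g) for GL_2, v=00003Dl'} equals $Z^{l'}UK^{l}\cap C_{G}(g_{\hat{\beta}})=Z^{l'}\cdot(UK^{l}\cap C_{G}(g_{\hat{\beta}}))$, on which $\theta$ coincides with $\psi_{\hat{\beta}}'$. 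Triviality then follows from two observations: $\psi_{\hat{\beta}}'$ is trivial on $Z^{l'}$, because by Frobenius reciprocity it is a constituent of $\sigma'|_{Z^{l'}UK^{l}}$ and $\sigma'$ is trivial on $Z^{l'}\subseteq Z$; and $\psi_{\hat{\beta}}$ is trivial on $UK^{l}\cap C_{G}(g_{\hat{\beta}})$ by Lemma~\ref{lem: psi_hat-beta res to U^l'-vK^l cap C is the trivial char}. The principal technical content of the argument is not in this theorem itself but in the preparatory Lemmas~\ref{lem:Stab-computation} and \ref{lem:intersection Stab cap C(g) is ZUK^l cap C(g) for GL_2, v=00003Dl'}, where the hypothesis $\cO_{1}=\F_{2}$ enters essentially in the case $\hat{\Delta}\in\mfp^{l'}\setminus\mfp^{l}$.
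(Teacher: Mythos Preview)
Your proof is correct and follows essentially the same approach as the paper's. The only cosmetic differences are that you identify $C_{G}(\hat{\beta})K^{l'}=ZUK^{l'}$ directly (the paper instead invokes Lemma~\ref{lem:intersection of centralisers in ZUK} to get the equality after intersecting with $C_{G}(g_{\hat{\beta}})$), and you observe that $\sigma|_{UK^{l'}}$ is already irreducible (since $Z$ acts by scalars on $\sigma$), whereas the paper more cautiously picks an irreducible constituent $\sigma'$; both routes feed into the same Heisenberg-lift argument via Lemmas~\ref{lem: Heisenberg GL_2, p=00003D2, v=00003Dl'}, \ref{lem:intersection Stab cap C(g) is ZUK^l cap C(g) for GL_2, v=00003Dl'} and \ref{lem: psi_hat-beta res to U^l'-vK^l cap C is the trivial char}.
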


\begin{proof}
By Lemma~\ref{lem: tau can be taken as 0},
$\rho\in\Irr(G\mid\psi_{\beta})$ for some $\beta=\begin{pmatrix}0 & \lambda\\
\lambda^{-1}\Delta & 0
\end{pmatrix}\in\mfg(\cO_{l}),\lambda\in\cO_{l}^{\times}$. By Lemma~\ref{lem:constr_regular_chars_for_GL2_or_SL2},
$\rho=\Ind_{C_{G}(\hat{\beta})K^{l'}}^{G_{}}\sigma$ for some $\sigma\in\Irr(C_{G}(\hat{\beta})K^{l'}\mid\psi_{\beta})$
and any choice of lift $\hat{\beta}$. By Lemma~\ref{lem: is contained in the conj rep iff sigma }\,(\ref{enu: i)})
with $H=G$ and $N=C_{G}(\hat{\beta})K^{l'}$, it is enough to prove
that $\sigma$ restricted to $C_{G}(\hat{\beta})K^{l'}\cap C_{G}(g)$
contains the trivial character, for some $g\in G$. 

For any lift $\hat{\beta}=\begin{pmatrix}0 & \hat{\lambda}\\
\hat{\lambda}^{-1}\hat{\Delta} & 0
\end{pmatrix}$ of $\beta$, Lemma~\ref{lem:intersection of centralisers in ZUK}
says that,
\[
C_{G}(\hat{\beta})K^{l'}\cap C_{G}(g_{\hat{\beta}})=ZUK^{l'}\cap C_{G}(g_{\hat{\beta}}).
\]
It is thus sufficient to show the following.
\begin{claim*}
There exists a lift $\hat{\beta}=\begin{pmatrix}0 & \hat{\lambda}\\
\hat{\lambda}^{-1}\hat{\Delta} & 0
\end{pmatrix}$ such that $\sigma$ contains the trivial character when restricted
to $ZUK^{l'}\cap C_{G}(g_{\hat{\beta}})$.
\end{claim*}
As $\rho$ is trivial on $Z$, so is $\sigma$ (by Lemma~\ref{lem: rho trivial on Z iff sigma trivial on Z}),
so it is enough to show the claim for $\sigma$ restricted to $UK^{l'}\cap C_{G}(g_{\hat{\beta}})$.
By Lemma~\ref{lem: Heisenberg GL_2, p=00003D2, v=00003Dl'},
for any irreducible constituent $\sigma'$ of $\sigma|_{UK^{l'}}$
there exists a $\psi_{\hat{\beta}}\in\Irr(UK^{l'}\mid\psi_{\beta})$,
for some lift $\hat{\beta}=\begin{pmatrix}0 & \hat{\lambda}\\
\hat{\lambda}^{-1}\hat{\Delta} & 0
\end{pmatrix}$, an arbitrary extension $\psi_{\hat{\beta}}'$ of $\psi_{\hat{\beta}}$
to $Z^{l'}UK^{l'}$ and an extension $\theta$ of $\psi_{\hat{\beta}}'$
to a subgroup $R$ of $S:=\Stab_{UK^{l'}}(\psi_{\hat{\beta}}')$ such
that $\Ind_{R}^{UK^{l'}}\theta$ is a multiple of $\sigma'$. Thus,
by Lemma~\ref{lem: is contained in the conj rep iff sigma }\,(\ref{enu: ii)})
with $H=UK^{l'}$, $N=R$ and $C=C_{G}(g_{\hat{\beta}})$, the above
claim holds if the following claim holds.
\begin{claim*}
$\theta$ restricted to $R\cap C_{G}(g_{\hat{\beta}})$ is trivial.
\end{claim*}
But, as $R\subseteq S$,
\[
R\cap C_{G}(g_{\hat{\beta}})\subseteq S\cap C_{G}(g_{\hat{\beta}})=Z^{l'}UK^{l}\cap C_{G}(g_{\hat{\beta}}),
\]
where the equality comes from Lemma~\ref{lem:intersection Stab cap C(g) is ZUK^l cap C(g) for GL_2, v=00003Dl'}
(which applies since we have assumed that $\cO_{1}=\F_{2}$), so it
is enough to prove that $\theta$ restricted to $Z^{l'}UK^{l}\cap C_{G}(g_{\hat{\beta}})$
is trivial.

Since $\theta|_{UK^{l}}=\psi_{\hat{\beta}}$ and $\theta$ is necessarily
trivial on $Z^{l'}$, we are reduced to showing that $\psi_{\hat{\beta}}$
contains the trivial character when restricted to 
\[
U^ {}K^{l}\cap C_{G}(g_{\hat{\beta}}).
\]
This last statement indeed holds by Lemma~\ref{lem: psi_hat-beta res to U^l'-vK^l cap C is the trivial char},
so the above claims follow and the proof is complete.
\end{proof}

\section{The case $\mathrm{SL}_2(\mathcal{O}_r)$ with $p=2$}
\label{sec:The case SL_2, p is 2}
In this section we solve the Hain--Tiep problem in the last remaining case, $p=2$, which we assume throughout this section. We adopt the following notation:
for any integer $i\geq 1$, let $S_i:= \SL_{2}(\cO_{i})$, $G_i:= \GL_{2}(\cO_{i})$ and let $Z_i$ be the centre of $S_i$. Moreover, let $G=G_r$, $S=S_r$ and $Z=Z_r$. We also let $Z_{G}$ denote the centre of $G$.
While it is well-known that $\rho_{r-1}$ maps $Z_{G}$  surjectively onto $Z_{G_{r-1}}$, this is not true for $Z$ and $Z_{r-1}$. 

Let $e$ denote the absolute ramification index of $\cO_r$, that is, the integer such that $(2):=2\cO_r=\mfp^e$.
The following lemma determines $Z$ explicitly when $e=1$ and implies that $\rho_{r-1}(Z)=\pm I$, where $I$ is the identity matrix.
\begin{lem}
\label{lem:Centre p=2}Assume
that $e=1$. Then 
\[
Z=(\pm1+\mfp^{r-1})I.
\]
\end{lem}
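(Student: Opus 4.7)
The plan is to reduce the lemma to a statement about square roots of $1$ in $\cO_r$. Since the centre of $\SL_2$ over any commutative ring $R$ consists of scalar matrices $aI$ with $a^2 = 1$, the claim $Z = (\pm 1 + \mfp^{r-1})I$ is equivalent to showing that the set of solutions of $a^2 = 1$ in $\cO_r$ is exactly $\pm 1 + \mfp^{r-1}$.

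For the inclusion $\pm 1 + \mfp^{r-1} \subseteq \{a \in \cO_r : a^2 = 1\}$, I would write $a = \epsilon + \pi^{r-1} x$ with $\epsilon \in \{\pm 1\}$ and $x \in \cO_r$, and simply expand
\[
a^2 - 1 = 2 \epsilon \pi^{r-1} x + \pi^{2(r-1)} x^2.
\]
The assumption $e = 1$ gives $2 \in \mfp \setminus \mfp^2$, so the first term lies in $\mfp^r$; and since $r \geq 2$ we have $2(r-1) \geq r$, so the second term lies in $\mfp^{2(r-1)} \subseteq \mfp^r$. Hence $a^2 = 1$ in $\cO_r$.

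For the reverse inclusion, I would lift $a$ to $\tilde{a} \in \cO$, so that $a^2 = 1$ in $\cO_r$ becomes $(\tilde{a}-1)(\tilde{a}+1) \in \mfp^r$ in the DVR $\cO$; equivalently, the $\mfp$-adic valuations satisfy $\val(\tilde{a}-1) + \val(\tilde{a}+1) \geq r$. The identity $(\tilde{a}+1) - (\tilde{a}-1) = 2$ together with $\val(2) = 1$ then pins down these valuations via the ultrametric inequality, and a short case analysis finishes the argument: if $\val(\tilde{a}-1) \geq 2$, strict triangle forces $\val(\tilde{a}+1) = 1$, hence $\val(\tilde{a}-1) \geq r-1$, giving $a \in 1 + \mfp^{r-1}$; symmetrically $\val(\tilde{a}+1) \geq 2$ yields $a \in -1 + \mfp^{r-1}$; the remaining possibility $\val(\tilde{a}-1) = \val(\tilde{a}+1) \leq 1$ forces $r \leq 2$, and when $r = 2$ one has $\pm 1 + \mfp^{r-1} = 1 + \mfp$, so this case is also covered. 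The only obstacle is the bookkeeping of the case split, which is routine; no step requires anything beyond standard valuation manipulation.
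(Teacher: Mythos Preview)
Your argument is correct. Both proofs reduce to determining $\{a\in\cO_r : a^2=1\}$ and exploit the factorisation $a^2-1=(a-1)(a+1)$, but the bookkeeping differs. The paper writes $a=1+2x$ (using that $\bar a=1$ in the residue field of characteristic $2$) and solves $2^2x(1+x)=0$ in $\cO_r$ by a case split on whether $x$ and $1+x$ are units. You instead lift to the DVR $\cO$ and argue with $\mfp$-adic valuations, using $(\tilde a+1)-(\tilde a-1)=2$ and $\val(2)=1$ to pin down $\val(\tilde a\pm1)$. These are equivalent: the paper's case ``$x$ a unit'' corresponds to your $\val(\tilde a+1)=1$, and ``$1+x$ a unit'' to $\val(\tilde a-1)=1$. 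Your valuation argument is slightly cleaner and makes the role of $e=1$ more transparent; the paper's is more explicit and produces the elements directly. One small point worth making explicit in your write-up: in the residual case you should note that $\val(\tilde a-1)+\val(\tilde a+1)\geq r\geq 2$ together with both being $\leq 1$ forces both to equal $1$, so $a\in 1+\mfp=1+\mfp^{r-1}$.
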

\begin{proof}
 We determine $Z\cong\{a\in\cO_{r}\mid a^{2}=1\}$
explicitly. If $a\in\cO_{r}$ is such that $a^{2}=1$, then $a=1+2x$,
for some $x\in\cO_{r}$ and
\[
(1+2x)^{2}=1+2^{2}x+2^{2}x^{2}=1.
\]
This is equivalent to
\begin{equation}
2^{2}x(1+x)=0.\label{eq: condition for centre-1}
\end{equation}
If $x$ is a unit and $1+x$ is a unit, then (\ref{eq: condition for centre-1})
holds if and only if $r\leq2$. If $x$ is a unit and $1+x$ is not
a unit, then (\ref{eq: condition for centre-1})
holds if and only if $1+x\in(2)^{r-2}$, that is, $x=-1+2^{r-2}y$,
for some $y\in\cO_{r}$, so that
\[
1+2x=1+2(-1+2^{r-2}y)=-1+2^{r-1}y.
\]
On the other hand, if $x$ is not a unit, then $1+x$ is a unit, and
(\ref{eq: condition for centre-1}) holds if and
only if $x\in(2)^{r-2}$, that is, $x=2^{r-2}y$ for some $y\in\cO_{r}$,
so that
\[
1+2x=1+2^{r-1}y.
\]
In summary, we have shown that 
\[
Z=\left\{ \begin{pmatrix}\pm1+2^{r-1}y & 0\\
0 & \pm1+2^{r-1}y
\end{pmatrix}\mid y\in\cO_{r}\right\} = (\pm1+\mfp^{r-1})I.
\]
\end{proof}
Recall from Section~\ref{sec:Basics-of-the-chars} the characters $\psi_{[\beta]}\in \Irr(\KSL^l)$, where $\beta\in\M_{2}(\cO_{l'})$ and $[\beta]$ denotes the coset of $\beta$ modulo scalar matrices. 

\begin{lem}
\label{lem: tau can be taken as 0-1}
Let
$\beta=\begin{pmatrix}\tau & \lambda\\
\lambda^{-1}\Delta & 0
\end{pmatrix}\in\M_{2}(\cO_{l'}),\lambda\in\cO_{l'}^{\times}$ and assume that $l'\geq e$. Let $\chi\in\Irr(S\mid\psi_{[\beta]})$
be a representation that is trivial on $\KSL^l\cap Z$.  Then $\chi\in\Irr(S\mid\psi_{[\beta']})$ for some $\beta'$ of the form
\[
\begin{pmatrix}0 & \lambda'\\
(\lambda')^{-1}\Delta' & 0
\end{pmatrix},
\]
with $\lambda'\in \cO_{l'}^{\times}$.
\end{lem}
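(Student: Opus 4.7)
The plan is to use Clifford theory to transfer the hypothesis from $\chi$ to the character $\psi_{[\beta]}$, then use central elements to constrain the trace $\tau$ of $\beta$, and finally modify the representative of $[\beta]$ within its $S$-orbit to achieve the required shape. The key point is that the $S$-orbit of $[\beta]$ in $\M_{2}(\cO_{l'})/\cO_{l'}I$ is preserved by $\SL_{2}(\cO_{l'})$-conjugation and by scalar shifts, and the trace modulo $2\cO_{l'}$ is an invariant of this orbit; one needs to show that the hypothesis forces this invariant to vanish.

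First, I would invoke Clifford's theorem: since $\chi$ lies above $\psi_{[\beta]}$, the restriction $\chi|_{\KSL^{l}}$ is a positive-integer multiple of the sum of the $S$-conjugates of $\psi_{[\beta]}$. Conjugation by elements of $S$ fixes each element of $Z$, so all of these conjugates agree with $\psi_{[\beta]}$ pointwise on $\KSL^{l}\cap Z$. Therefore the assumption that $\chi|_{\KSL^{l}\cap Z}$ is trivial forces $\psi_{[\beta]}|_{\KSL^{l}\cap Z}=\mathbf{1}$.

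Second, I would analyse the scalar matrices $(1+a)I$ lying in $\KSL^{l}\cap Z$. Writing $2=u\pi^{e}$ for some unit $u\in\cO_{r}^{\times}$, the condition $(1+a)^{2}=1$ is equivalent to $a(a+2)=0$ in $\cO_{r}$. A direct valuation computation shows that when $l'\geq e$, the scalars $(1+a\pi^{r-e})I$ for $a\in\cO_{r}$ lie in $\KSL^{l}\cap Z$: the containment in $\KSL^{l}$ uses $r-e\geq l$ (equivalent to $l'\geq e$), and the squaring condition follows from $r\geq 2e$ in the generic case. Evaluating $\psi_{[\beta]}$ on these elements gives $\psi(a\pi^{r-e}\hat\tau)$. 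The hypothesis that $\psi$ does not factor through $\cO_{r-1}$ is equivalent to the non-degeneracy of the pairing $(x,y)\mapsto\psi(xy)$ on $\cO_{r}$, so the vanishing of $\psi(a\pi^{r-e}\hat\tau)$ for all $a$ forces $\pi^{r-e}\hat\tau=0$ in $\cO_{r}$, i.e., $\hat\tau\in\mfp^{e}$. Hence $\tau\in\mfp^{e}/\mfp^{l'}=2\cO_{l'}$ in $\cO_{l'}$.

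Third, I would write $\tau=2c$ with $c\in\cO_{l'}$. The matrix $\beta-cI$ is then a representative of the coset $[\beta]$ of trace zero, and since regularity depends only on the image in $\M_{2}(\F_{q})$, it remains regular. By the conjugacy classification recalled in Section~\ref{sec:Basics-of-the-chars}, every trace-zero regular matrix in $\M_{2}(\cO_{l'})$ is $\SL_{2}(\cO_{l'})$-conjugate to some $\beta'=\begin{pmatrix}0 & \lambda'\\(\lambda')^{-1}\Delta' & 0\end{pmatrix}$ with $\lambda'\in\cO_{l'}^{\times}$. Surjectivity of $\SL_{2}(\cO_{r})\twoheadrightarrow\SL_{2}(\cO_{l'})$ then places $[\beta']$ in the same $S$-orbit as $[\beta-cI]=[\beta]$, so $\psi_{[\beta']}$ appears as an $S$-conjugate of $\psi_{[\beta]}$ in $\chi|_{\KSL^{l}}$, giving $\chi\in\Irr(S\mid\psi_{[\beta']})$.

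The main obstacle is Step~2, specifically producing enough central elements in $\KSL^{l}\cap Z$ to pin $\hat\tau$ down to $\mfp^{e}$. The case $r$ even with $l'\geq e$, as well as $r$ odd with $l'>e$, is clean because $(1+a\pi^{r-e})^{2}=1$ follows automatically from $\pi^{2(r-e)}=0$ in $\cO_{r}$. The boundary case $r=2e-1$ with $l'=e$ is more delicate: one must restrict $a$ to $\mfp$ to guarantee the squaring identity, and then supplement the constraint with the contribution of other central elements (most notably $-I$, which lies in $\KSL^{l}\cap Z$ whenever $e\geq l$) to recover $\hat\tau\in\mfp^{e}$; assembling these constraints in a uniform way is the main technical step.
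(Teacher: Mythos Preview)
Your approach is essentially the same as the paper's: reduce to $\psi_{[\beta]}$ being trivial on $\KSL^{l}\cap Z$ via Clifford theory, evaluate on the scalars $(1+a\pi^{r-e})I$ to force $\hat\tau\in\mfp^{e}$, and then translate by a scalar to kill the trace and conjugate to companion form.

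The only discrepancy is your worry about a ``boundary case $r=2e-1$ with $l'=e$'', which does not occur. In the paper's conventions one has $r=2l'$ for $r$ even and $r=2l'+1$ for $r$ odd, i.e.\ $l'=\lfloor r/2\rfloor$ (the floor/ceil symbols in the introduction are swapped, but the displayed relations $r=2l-1=2l'+1$ are the intended ones and are what is used throughout). Hence $l'\geq e$ forces $r\geq 2e$ in every case, so $2(r-e)\geq r$ and $\pi^{2(r-e)}=0$ in $\cO_{r}$. Together with $2\pi^{r-e}\in\mfp^{r}$ this gives $(1+a\pi^{r-e})^{2}=1$ for \emph{all} $a\in\cO_{r}$, with no case split needed; the paper records this by simply noting $l\geq l'\geq e$. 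Once you drop that spurious obstacle, your Step~2 is complete and your argument matches the paper's.
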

\begin{proof}
By Clifford's theorem, it is enough to prove that there exists
an element in $[\beta]$ that is $\SL_{2}(\cO_{l'})$-conjugate
to an element with zero diagonal.
Since $\chi$ is trivial on $\KSL^{l}\cap Z$,  so is $\psi_{[\beta]}$.

As $l'\geq e$, we have $r-e\geq l$ and moreover,  $(1+\pi^{r-e}a)^{2}\in1+\mfp^{e}\mfp^{r-e}+\mfp^{2(r-e)}=1$
when $l\geq e$ (which holds, as $l\geq l'$). Therefore
$$\KSL^{l}\cap Z\supseteq \left\{ \begin{pmatrix}1+\pi^{r-e}a & 0\\
0 & 1+\pi^{r-e}a
\end{pmatrix}\mid a\in\cO_{r}\right\}$$ 
and thus
\[
\psi_{[\beta]}\begin{pmatrix}1+\pi^{r-e}a & 0\\
0 & 1+\pi^{r-e}a
\end{pmatrix}=\psi(\hat{\tau}\pi^{r-e}a)=1,
\]
for all $a$; hence $\hat{\tau}\in\mfp^{e}$ and so $\tau\in\mfp^{e}$.
For any $x\in\cO_{l'}$,
\[
\beta+xI=\begin{pmatrix}\tau+x & \lambda\\
\lambda^{-1}\Delta & x
\end{pmatrix}
\]
is regular as an element of $\M_{2}(\cO_{l'})$ and has trace $\tau+2x$
and determinant $-\Delta+x(\tau+x)$ and is therefore $\SL_{2}(\cO_{l'})$-conjugate
to a matrix of the form
\[
\begin{pmatrix}\tau+2x & \lambda'\\
(\lambda')^{-1}\Delta' & 0
\end{pmatrix},
\]
where $\Delta':=\Delta-x(\tau+x)$ and $\lambda'\in\cO_{l'}^{\times}$.
Writing $2=\pi^{e}u$ for some unit $u\in\cO_{r}^{\times}$ and $\tau=\pi^{e}v$,
for some $v\in\cO_{r}$, we can choose $x=-u^{-1}v$, so that $\tau+2x=0$. 
\end{proof}

Let $\hat{\chi}\in\Irr(S_{n})$
be a representation that is trivial on $Z_{n}$ with $K_n^r$ the maximal subgroup among the $K_n^i$ contained in the kernel of $\hat{\chi}$.
Thus, $\hat{\chi}$ is equal to the inflation $\Inf^{S_n}_{S}(\chi)$ for some \emph{primitive} $\chi\in\Irr(S)$. Let
$$\Zimage := \rho_{n,r}(Z_n)= \begin{cases}
Z & \text{if }n=r,\\
\pm I & \text{if }n>r,
\end{cases}$$
where the case when $n>r$ follows from \cref{lem:Centre p=2}.
As $\hat{\chi}$ is assumed to be trivial on $Z_n$ and on $K^r$, it factors through 
$$\frac{S_{n}}{Z_{n}K_{n}^{r}}\cong\frac{S_{n}/K_{n}^{r}}{Z_{n}K_{n}^{r}/K_{n}^{r}}\cong S/\Zimage,$$
so the character $\chi$ it defines on $S$ is trivial on $\Zimage$.

For $g\in S_n$ we will write
$$\Cimage := \rho_{n,r}(C_{S_n}(g)),$$
which should not cause confusion as $n$ and $r$ are fixed (but arbitrary, with $r\geq 1$), throughout.
\begin{lem}
\label{inflation}
Let $\hat{\chi} = \Inf^{S_n}_S(\chi)$ for some character $\chi$ of $S$. Then
the restriction of $\hat{\chi}$  to the centraliser $C_{S_n}(g)$ of some $g\in S_n$ contains the trivial character if and only if $\chi$ restricted to $\Cimage$ contains the trivial character.
\end{lem}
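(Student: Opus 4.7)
The plan is to show that the restriction of $\hat{\chi}$ to $C_{S_n}(g)$ is itself the inflation of $\chi|_{\Cim(g)}$ through the surjection $\rho_{n,r}|_{C_{S_n}(g)}$, and then invoke the general fact that inflation preserves the multiplicity of the trivial character.

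First I would set $H = C_{S_n}(g)$ and $N = H \cap K_n^r$. Since $\hat{\chi} = \Inf^{S_n}_S(\chi)$, it is trivial on $K_n^r$, hence $\hat{\chi}|_H$ is trivial on $N$. The map $\rho_{n,r}: S_n\to S$ restricts to a homomorphism $\rho_{n,r}|_H : H \to S$ whose image is $\Cim(g)$ by definition and whose kernel is exactly $N$, so it induces an isomorphism $H/N \iso \Cim(g)$. Under this identification, the character $\hat{\chi}|_H$ of $H$ corresponds to $\chi|_{\Cim(g)}$ viewed as a character of $H/N$.

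Next I would use the general observation that for any finite group $H$, normal subgroup $N \trianglelefteq H$, and character $\psi$ of $H/N$ with inflation $\tilde{\psi}$ to $H$,
\[
\langle \tilde{\psi}, \mathbf{1}_H\rangle = \frac{1}{|H|}\sum_{h\in H}\tilde\psi(h) = \frac{|N|}{|H|}\sum_{xN\in H/N}\psi(xN) = \langle \psi, \mathbf{1}_{H/N}\rangle.
\]
Applied to our situation with $\psi = \chi|_{\Cim(g)}$ and $\tilde\psi = \hat{\chi}|_H$, this gives
\[
\langle \hat{\chi}|_{C_{S_n}(g)}, \mathbf{1}\rangle = \langle \chi|_{\Cim(g)}, \mathbf{1}\rangle,
\]
so one side is positive if and only if the other is. I do not anticipate any serious obstacle here; the only thing to be careful about is to verify that $\rho_{n,r}|_H$ really has kernel $H\cap K_n^r$ (clear from the definition of $K_n^r$ as the kernel of $\rho_{n,r}$ on $S_n$) and that its image is $\Cim(g)$ (true by the very definition of $\Cim(g)$).
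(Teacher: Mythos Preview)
Your proposal is correct and is essentially the same argument as the paper's: both identify $\hat{\chi}|_{C_{S_n}(g)}$ with the inflation of $\chi|_{\Cim(g)}$ via the surjection $\rho_{n,r}$ (the paper phrases this as $\Res\circ\Inf=\Inf\circ\Res$), and both then use that inflation preserves the multiplicity of the trivial character. You have simply unpacked the inner-product identity explicitly rather than invoking the inflation/fixed-space adjunction.
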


\begin{proof}
  Note that $\Res_{C_{S_n}(g)}^{S_n} \Inf^{S_n}_{S}(\chi)=\Inf^{C_{S_n}(g)}_{\Cimage}\Res_{\Cimage}^{S} (\chi)$.  Moreover,
    \[
\langle\Inf^{C_{S_n}(g)}_{\Cimage}\Res_{\Cimage}^{S} (\chi),\mathbf{1}\rangle=\langle\Res_{C_{S_n}(g)}^{S} (\chi),\mathbf{1}\rangle,
\]
as inflation is adjoint to the $C \cap K^r$-fixed space construction. 
\end{proof}

Thus, we will work to show that there is a $g\in S_n$  such that $\chi$ restricted to
$\Cimage$  contains the trivial character. As $\chi$ is primitive, it is regular (see \cref{lem:constr_regular_chars_for_GL2_or_SL2}), that is,  $\chi \in \Irr(S\mid \psi_{[\beta]})$  with $\beta=\begin{pmatrix}\tau & \lambda\\
\lambda^{-1}\Delta & 0
\end{pmatrix}\in\M_{2}(\cO_{l'})$, with $\lambda \in \cO_{l'}^{\times}$.

\subsection{Special group elements and intersections with their
centralisers}



\begin{lem}
\label{lem:conjugation}
Let $\alpha\in\M_{2}(\cO_{l'})$ be regular. Assume that $e=1$ and $\cO_{1}=\F_2$. There exists an $x \in \cO_{l'}$ and $\lambda\in \cO_{l'}^{\times}$ such that $\alpha + xI$ is $\SL_{2}(\cO_{l'})$-conjugate to $\beta$ of one of the following forms. 
\begin{enumerate}

\item $\beta=\begin{pmatrix} 0 & \lambda\\
\lambda^{-1}\Delta & 0 
\end{pmatrix}$;
    \item $\beta=\begin{pmatrix}\tau & \lambda\\
\lambda^{-1}\Delta & 0
\end{pmatrix}$ such that $\tau + \lambda^{-1}\Delta=2\lambda$;

    \item $\beta=\begin{pmatrix}\tau & \lambda\\
0 & 0
\end{pmatrix}$.

\end{enumerate}

\end{lem}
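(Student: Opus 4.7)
Plan:

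By the discussion in Section~\ref{sec:Basics-of-the-chars}, every regular element of $\M_{2}(\cO_{l'})$ is $\SL_{2}(\cO_{l'})$-conjugate to a matrix in the \emph{standard form}
\[
\alpha_{0} := \begin{pmatrix} \tau_{0} & \lambda_{0} \\ \lambda_{0}^{-1}\Delta_{0} & 0 \end{pmatrix},
\]
with $\tau_{0}, \Delta_{0} \in \cO_{l'}$ and $\lambda_{0} \in \cO_{l'}^{\times}$. Since $\SL_{2}$-conjugation commutes with the shift $\alpha \mapsto \alpha + xI$, we may replace $\alpha$ by $\alpha_{0}$. The hypotheses $e = 1$ and $\cO_{1} = \F_{2}$ together give $\mfp = (2)$ in $\cO_{l'}$.

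The proof hinges on a uniform conjugation. For any $x \in \cO_{l'}$, the element $g_{x} := \bigl(\begin{smallmatrix} 1 & 0 \\ x\lambda_{0}^{-1} & 1 \end{smallmatrix}\bigr)$ lies in $\SL_{2}(\cO_{l'})$, and a direct matrix multiplication shows
\[
g_{x}^{-1}(\alpha_{0} + xI) g_{x} = \begin{pmatrix} \tau_{0} + 2x & \lambda_{0} \\ \lambda_{0}^{-1}(\Delta_{0} - x\tau_{0} - x^{2}) & 0 \end{pmatrix}.
\]
It only remains to choose $x$ so that this conjugate lies in one of the three listed forms, and we split into cases according to whether $\tau_{0}$ and $\Delta_{0}$ are units or lie in $\mfp$. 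For form (1), assume $\tau_{0} \in \mfp$ and write $\tau_{0} = 2t$; the choice $x = -t$ gives $\tau_{0} + 2x = 0$, so the conjugate lies in form (1). For form (3), assume $\tau_{0} \in \cO_{l'}^{\times}$ and $\Delta_{0} \in \mfp$; Hensel's lemma applied to $f(X) = X^{2} + \tau_{0}X - \Delta_{0}$ (using $f(0) \in \mfp$ and $f'(0) = \tau_{0}$ a unit) yields a unique root $x \in \mfp$ for which $\Delta_{0} - x\tau_{0} - x^{2} = 0$, so the conjugate lies in form (3).

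The main case is form (2), in which both $\tau_{0}$ and $\Delta_{0}$ are units. Here we take $\lambda = \lambda_{0}$; the form (2) condition $(\tau_{0} + 2x) + \lambda_{0}^{-1}(\Delta_{0} - x\tau_{0} - x^{2}) = 2\lambda_{0}$, after multiplication by $\lambda_{0}$, rearranges to the quadratic in $x$,
\[
x^{2} - (2\lambda_{0} - \tau_{0})x + (2\lambda_{0}^{2} - \lambda_{0}\tau_{0} - \Delta_{0}) = 0,
\]
whose discriminant is $\tau_{0}^{2} + 4\Delta_{0} - 4\lambda_{0}^{2}$. This is the main obstacle, since the usual Hensel argument fails: the partial derivative with respect to $x$ is $2x - (2\lambda_{0} - \tau_{0})$, which is not a unit modulo~$\mfp$. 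Instead, I will argue directly: since $\tau_{0}$, $\lambda_{0}$ and $\Delta_{0}$ are all odd, the discriminant is $\equiv 1 \pmod 8$, hence a square in $\Z_{2}$ whose square root $\sqrt{\mathrm{disc}}$ is odd. Because $2\lambda_{0} - \tau_{0}$ is also odd, the formal roots $\bigl((2\lambda_{0} - \tau_{0}) \pm \sqrt{\mathrm{disc}}\bigr)/2$ have even numerator, so they lie in $\Z_{2}$ and reduce to elements of $\cO_{l'} = \Z/2^{l'}$ satisfying the quadratic. This produces the required $x$, completing the case and the proof.
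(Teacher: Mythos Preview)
Your proof is correct and follows the same three-way case split as the paper (on whether $\tau_0$ and $\Delta_0$ are units), but the techniques in the individual cases differ. For form~(2), the paper shows that the map $x\mapsto \tau+\lambda^{-1}\Delta$ restricted to $\cO_{l'}^\times$ is a bijection onto $\mfp$ (injectivity plus $|\cO_{l'}^\times|=|\mfp|$ from $\cO_1=\F_2$), whereas you solve the quadratic directly via the discriminant $\equiv 1\pmod 8$; for form~(3), the paper again uses a bijectivity/counting argument while you invoke Hensel. Your explicit conjugation by $g_x$ also makes visible that $\lambda_0$ stays fixed as $x$ varies, a point the paper's phrasing leaves implicit. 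Both approaches work; yours is more concrete and implicitly uses that $e=1$ together with $\cO_1=\F_2$ forces $\cO\cong\Z_2$ (so that lifting to $\Z_2$ and the mod~$8$ square criterion are available), while the paper's counting arguments stay entirely inside $\cO_{l'}$.
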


\begin{proof}
Since $\alpha + xI$ is regular, it is $\SL_{2}(\cO_{l'})$-conjugate
to a matrix of the form
\[
\beta=\begin{pmatrix}\tau & \lambda\\
\lambda^{-1}\Delta & 0
\end{pmatrix},
\]
where $\lambda\in \cO_{l'}^{\times}$,  $\tau=\Tr(\alpha)+2x$ and $\Delta=-(\det(\alpha)+x(\Tr(\alpha)+x))$.

For \textit{Case 1}, assume that $\Tr(\alpha)$ is not a unit. Since $e=1$ we can write $2=\pi u$ for some unit $u\in\cO_{l'}^{\times}$ and when $\Tr(\alpha)$ is not a unit, then $\tau$ is not a unit, so we can write $\tau=\pi v$ 
for some $v\in\cO_{l'}$. We can choose $x=-u^{-1}v$, so that $\tau+2x=0$. Thus, we can obtain $\beta=\begin{pmatrix} 0 & \lambda\\
\lambda^{-1}\Delta & 0 
\end{pmatrix}$ when $\Tr(\alpha)$ is not a unit. 

For \textit{Case 2}, we assume that $\Tr(\alpha)$ and $\det(\alpha)$ are units. Define the function
\begin{align*}
  F: \cO_{l'}^{\times} & \longrightarrow \cO_{l'} \setminus \cO_{l'}^{\times},\\
 F(x) & =  \Tr(\alpha)+2x - \lambda^{-1}(\det(\alpha)+x(\Tr(\alpha)+x))  =  \tau + \lambda^{-1}\Delta,
\end{align*}
Note that $F(x)$ is always a non-unit since $\Tr(\alpha)$ and $\det(\alpha)$ are units and $\cO_1=\F_2$. We claim that $F$ is bijective. Indeed, if  $F(x)=F(y)$, then  $(y-x)(\lambda^{-1}\Tr(\alpha)-2+\lambda^{-1}(y+x))=0$. As $x$ and $y$ are units, we have that $x+y$ is not a unit and therefore, since $\Tr(\alpha)$ is a unit, $\lambda^{-1}\Tr(\alpha)-2+\lambda^{-1}(y+x)$ is a unit. Thus $y-x=0$, so $F$ is injective. Since $\cO_1=\F_2$, we have  
$$|\cO_{l'}^{\times}|=|1+\mfp|=|\mfp|=|\cO_{l'} \setminus \cO_{l'}^{\times}|,$$ hence $F$ is also surjective. Therefore, there is an $x \in \cO_{l'}^{\times}$ such that $\tau + \lambda^{-1}\Delta=2\lambda$. 

For \textit{Case 3}, assume that $\Tr(\alpha)$ is a unit and $\det(\alpha)$ is not a unit. 
%
%
We claim that the function 
\begin{align*}
  F: \cO_{l'}^{\times} & \longrightarrow \cO_{l'} \setminus \cO_{l'}^{\times},\\
 F(x) & =x(\Tr(\alpha)+ x),
\end{align*}
is injective and therefore surjective. Indeed, if $F(x) = F(y)$, then $(x-y)(\Tr(\alpha)+x+y)=0$. As $\Tr(\alpha)+x+y$ is a unit, we conclude that $x=y$. Thus, there exists an $x$ such that $\Delta = -(\det(\alpha) + x(\Tr(\alpha)+x))=0$ and therefore we can obtain, 
$$\beta=\begin{pmatrix}\tau & \lambda\\
0 & 0
\end{pmatrix}.$$

\end{proof}

\begin{lem}
\label{intercept} 
Assume that either of the following conditions holds.
\begin{enumerate}

\item  $\hat{\beta}=\begin{pmatrix}\hat{\tau} & \hat{\lambda}\\
\hat{\lambda}^{-1}\hat{\Delta} & 0
\end{pmatrix}$, $\hat{\lambda},\hat{\tau},\hat{\Delta}\in \cO_r^{\times}$ and $g=\begin{pmatrix}1 & 1\\
w & 1+w
\end{pmatrix}\in G$ with $w \in \mfp$. 

\item  ${\hat{\beta}}=\begin{pmatrix}\hat{\tau} & \hat{\lambda}\\
0 & 0
\end{pmatrix}$, $\hat{\lambda},\hat{\tau}\in \cO_r^{\times}$ and $g=\begin{pmatrix}1 & 1\\
0 & 1
\end{pmatrix}\in G$. 
\end{enumerate}
Then for any $1\leq i\leq r$,   $$C_{G}({\hat{\beta}})K^{i}\cap C_{G}(g)\subseteq Z_{G}K^{i}.$$ 

\end{lem}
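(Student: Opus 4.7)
The plan is to fix an arbitrary $h\in C_{G}(\hat\beta)K^{i}\cap C_{G}(g)$ and show that its image $h_{i}\in G_{i}$ is a scalar matrix. This suffices: since $Z_{G}=\cO_{r}^{\times}I$ and the projection $\cO_{r}^{\times}\twoheadrightarrow\cO_{i}^{\times}$ is surjective, we have $\rho_{i}(Z_{G})=\cO_{i}^{\times}I=Z_{G_{i}}$, so any $h_{i}\in Z_{G_{i}}$ lifts to a $z\in Z_{G}$ with $z^{-1}h\in K^{i}$, giving $h\in Z_{G}K^{i}$.

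In both cases, the reductions of $\hat\beta$ and $g$ modulo $\mfp$ are non-scalar: for $\hat\beta$ because the $(1,2)$-entry $\hat\lambda$ is a unit, and for $g$ because $g$ reduces to $\bigl(\begin{smallmatrix}1 & 1\\ 0 & 1\end{smallmatrix}\bigr)$. By Hill's theorem (as recalled in Section~\ref{sec:Basics-of-the-chars}), both matrices are therefore regular in $\M_{2}(\cO_{r})$, so
\[
C_{G_{i}}(\hat\beta_{i})=\{aI+b\hat\beta_{i}\mid a,b\in\cO_{i}\}\cap G_{i},\quad C_{G_{i}}(g_{i})=\{a'I+b'g_{i}\mid a',b'\in\cO_{i}\}\cap G_{i}.
\]
Since $h_{i}$ lies in both centralisers, the next step is to equate the two parametrisations and solve the resulting small linear system in $a,b,a',b'\in\cO_{i}$; the whole content of the lemma is that it forces $b=b'=0$.

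The main step is case (1). Comparing off-diagonal entries yields $b'=b\hat\lambda_{i}$ and $b\hat\lambda_{i}^{-1}\hat\Delta_{i}=b'w$, which combine into
\[
b\bigl(\hat\lambda_{i}^{-1}\hat\Delta_{i}-\hat\lambda_{i}w\bigr)=0.
\]
The key observation, and the main obstacle, is that the bracket is a unit in $\cO_{i}$: one has $\hat\lambda_{i}^{-1}\hat\Delta_{i}\in\cO_{i}^{\times}$ because $\hat\Delta,\hat\lambda\in\cO_{r}^{\times}$, while $\hat\lambda_{i}w\in\mfp$ because $w\in\mfp$. This is precisely where both the unit hypothesis on $\hat\Delta$ and the non-unit hypothesis on $w$ are used, and it forces $b=0$, hence $b'=0$.

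Case (2) is analogous but simpler: equating $(1,1)$- and $(2,2)$-entries gives $b\hat\tau_{i}=0$, which forces $b=0$ (and hence $b'=0$) immediately from $\hat\tau\in\cO_{r}^{\times}$. In either case the remaining equations collapse to $h_{i}=aI$ for some $a\in\cO_{i}$, and $h_{i}\in G_{i}$ forces $a\in\cO_{i}^{\times}$, so $h_{i}\in Z_{G_{i}}$ and the initial reduction completes the proof. No further subtleties are expected—the unit-coefficient verification in case (1) is the only non-routine point.
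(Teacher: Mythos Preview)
Your proof is correct and follows essentially the same approach as the paper: reduce modulo $\mfp^{i}$, use regularity to write both centralisers as $\{aI+bM\}$, and compare entries to force $b=0$. The only cosmetic difference is which pair of entries you compare in case~(1) (you use the off-diagonal entries and the unit $\hat\Delta$, the paper uses the diagonal difference and the unit $\hat\tau$), and in case~(2) the paper computes $h_{i}\hat\beta_{i}h_{i}^{-1}-\hat\beta_{i}$ directly rather than parametrising $C_{G_{i}}(\hat\beta_{i})$, but the arguments are interchangeable.
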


\begin{proof}
Assume (1) and let $h\in C_{G}({\hat{\beta}})K^{i}\cap C_{G}(g)$. Then
\begin{align*}
h_{i} & \in C_{G_{i}}(\hat{\beta}_i)\cap C_{G_{i}}(g_i)\\
 & \subseteq\left\{ \begin{pmatrix}a+b\hat{\tau}_i & b\hat{\lambda}_i\\
b\hat{\lambda}_i^{-1}\hat{\Delta}_i & a
\end{pmatrix}\mid a,b\in\cO_{i}\right\} \cap\left\{ \begin{pmatrix}x & y\\
yw & x+yw
\end{pmatrix}\mid x,y\in\cO_{i}\right\}.
\end{align*}
A matrix in this intersection is of the form
\[
\begin{pmatrix}a+b\hat{\tau}_i & b\hat{\lambda}_i\\
b\hat{\lambda}_i^{-1}\hat{\Delta}_i & a
\end{pmatrix}=\begin{pmatrix}x & y\\
yw & x+yw
\end{pmatrix},
\]
which implies that $b\hat{\tau}_i=-yw=-b\hat{\lambda}_i w$. Since $\tau$ is a unit and $w\in \mfp$, this implies that $b=y=0$. Thus $h_i \in Z_{G_i}$ and since $Z_{G}$ maps surjectively onto $Z_{G_i}$, we get $h\in Z_{G}K^i$.

Assume (2) and let $h \in C_{G}({\hat{\beta}})K^{i}\cap C_{G}(g)$. Then $h_{i}= \begin{pmatrix}x & y\\
 0 & x
\end{pmatrix}$, for some $x\in \cO_{i}^{\times}$, $y\in \cO_{i}$ and  $h_{i}\hat{\beta}_i h_{i}^{-1}=\hat{\beta}_i$. By computation, we have
\[
h_{i}\hat{\beta}_i h_{i}^{-1}-\hat{\beta}_i=\begin{pmatrix}0 &  -yx^{-1}\hat{\tau}_i\\
0 & 0
\end{pmatrix},
\]
which implies that $y=0.$ Thus, $h\in Z_G K^{i}$.
\end{proof}

In the following, we will say that $\beta \in \M_2(\cO_{l'})$ is of \emph{type n}, with $n=1,2,3$, if it is as in case ($n$) in \cref{lem:conjugation}.

\begin{lem}
\label{tau and delta units}
Assume that $\cO_{1}=\F_2$.
Let $\beta \in\M_{2}(\cO_{l'})$ and assume that one of the following holds.
\begin{enumerate}
    \item $\beta$ is of type 2 and $g=\begin{pmatrix}1 & 1\\
w & 1+w
\end{pmatrix}\in S_n$ with $w=-2+\pi^l \in \cO_n$ and $n > r$;
    \item $\beta$ is of type 3 and $g=\begin{pmatrix}1 & 1\\
0 & 1
\end{pmatrix}\in S_n$.
\end{enumerate}
Let $\psi'_{\beta} \in \Irr(Z_{G} K^{l})$ be an extension of $\psi_{\beta}$ that is trivial on $\pm I$. Then $\psi'_{\beta}$ is trivial on $C_{G}(\hat{\beta})K^l\cap \Cimage$, for every lift $\hat{\beta}$.

\end{lem}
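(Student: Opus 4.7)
The strategy is to reduce the intersection to an explicit small set via Lemma~\ref{intercept}, parametrize its elements using the regularity of $g_r := \rho_{n,r}(g) \in G$, and then compute $\psi'_{\beta}$ directly, exploiting the extra rigidity provided by the condition $\det h' = 1$ in the larger ring $\cO_n$.

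First, since $C_{S_n}(g)\subseteq C_{G_n}(g)$ reduces modulo $\mfp^r$ into $C_G(g_r)$, we have $\Cimage \subseteq C_G(g_r)$. In case~(1), $w_r:=\rho_{n,r}(w)=-2+\pi^l\in\mfp$ because $e=1$ forces $2\in\mfp$, so Lemma~\ref{intercept}(1) applies; in case~(2), Lemma~\ref{intercept}(2) applies directly. In either case every $h\in C_G(\hat\beta)K^l\cap\Cimage$ lies in $Z_G K^l$, and regularity of $g_r$ gives the explicit form
\[
h=\begin{pmatrix}a & b\\ bw_r & a+bw_r\end{pmatrix}\ \text{(case 1)},\qquad h=\begin{pmatrix}a & b\\ 0 & a\end{pmatrix}\ \text{(case 2)},
\]
while the argument of Lemma~\ref{intercept} at level $l$ forces $b\in\mfp^l$.

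Now write $h=\rho_{n,r}(h')$ with $h'=a'I+b'(g-I)\in C_{S_n}(g)$, so that $\det h' =1$ in $\cO_n$. In case~(2) this is $(a')^2=1$, and by Lemma~\ref{lem:Centre p=2} applied to $\cO_n$ (still with $e=1$), $a'\in\pm1+\mfp^{n-1}$; since $n>r$ gives $n-1\geq r$, we get $a=\pm1$ \emph{exactly} in $\cO_r$. Hence $h=(\pm I)\cdot k$ with $k\in K^l$, and since $\hat\beta$ has zero bottom row one checks that $\Tr(\hat\beta(k-I))=0$, giving $\psi'_{\beta}(h)=\psi'_{\beta}(\pm I)\,\psi_{\beta}(k)=1$.

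For case~(1), $(a')^2-1=-wb'(a'-b')\in\mfp^{l+1}$, so $a'\in\pm1+\mfp^l$ in $\cO_n$; after replacing $h$ by $-h$ we may assume $a'=1+\pi^l t'$, and we write $b'=\pi^l b_0'$. Then $h\in K^l$ and $\psi'_{\beta}(h)=\psi_{\beta}(h)=\psi(\Tr(\hat\beta(h-I)))$. Expanding the trace and using the explicit form $w_r=-2+\pi^l$ together with the type~2 relation $\hat\tau+\hat\lambda^{-1}\hat\Delta\equiv 2\hat\lambda\pmod{\mfp^{l'}}$ simplifies $\Tr(\hat\beta(h-I))$ in $\cO_r$ to $\pi^l\hat\tau(t-b_0)+\pi^{2l}\hat\lambda b_0$, where $b_0=\rho_{n,r}(b_0')$. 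To conclude one must further exploit the equation $\det h'=1$: dividing it by $\pi^{l+1}$ in $\cO_n$ and iterating the resulting congruence yields first $t'-b_0'\in\mfp^{l-1}$ and then, with $s':=(t'-b_0')/\pi^{l-1}$ and $u:=2/\pi\in\cO^{\times}$,
\[
us'+(b_0')^2+b_0'\equiv 0\pmod{\mfp^l}\ \text{in}\ \cO_n.
\]
Reducing to $\cO_r$ and invoking $\cO_1=\F_2$, which forces $b_0(1+b_0)\in\mfp$, a careful count of powers of $\pi$ shows that the two contributions $\pi^l\hat\tau(t-b_0)$ and $\pi^{2l}\hat\lambda b_0$ cancel modulo $\mfp^r$, so $\Tr(\hat\beta(h-I))=0$ and $\psi'_{\beta}(h)=1$.

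The main obstacle is the algebraic bookkeeping in case~(1): one has to align the $\det=1$ constraint lifted to $\cO_n$ (strictly stronger than its reduction to $\cO_r$) with the type~2 identity modulo $\mfp^{l'}$ and the specific shape of $w_r=-2+\pi^l$, and the final cancellation genuinely uses the $\F_2$-residue-field hypothesis through $b_0(1+b_0)\in\mfp$. Minor care is also needed for the parity of $r$, since the term $\pi^{2l}\hat\lambda b_0$ vanishes automatically when $r$ is even but must be cancelled against $\pi^{r-2}\hat\tau s$ when $r$ is odd.
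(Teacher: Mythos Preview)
Your overall strategy coincides with the paper's: reduce to $Z_G K^l$ via Lemma~\ref{intercept}, lift $h$ to $h'\in C_{S_n}(g)$ with $\det h'=1$ in $\cO_n$, and exploit this stronger determinant constraint. Case~(2) is essentially identical to the paper's argument (and you are right that $n>r$ is what forces $a=\pm1$ exactly in $\cO_r$).

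In case~(1), however, your route is more involved than necessary and the sketch contains errors. The simplification you miss is the \emph{parametrisation}: the paper writes $h'=xI+yg$ rather than your $a'I+b'(g-I)$. With this choice, the identity $2+w=\pi^l$ gives the exact formula $\det h'=x^2+\pi^{2l}(xy+y^2)$ after substituting $y\mapsto\pi^l y$; since $xy+y^2\equiv y(1+y)\equiv 0\pmod{\mfp}$ by $\cO_1=\F_2$, one reads off $x^2\equiv 1\pmod{\mfp^{r+1}}$ directly, so $x_r=\pm1$ by Lemma~\ref{lem:Centre p=2}. Then $h_r=(\pm I)k$ with $k-I=\pi^l y\, g_r$ a scalar multiple of $g_r$, whence
\[
\psi_\beta(k)=\psi\bigl(\pi^l y\,\Tr(\hat\beta g_r)\bigr)=\psi\bigl(\pi^l y(\hat\tau+\hat\lambda w_r+\hat\lambda^{-1}\hat\Delta)\bigr),
\]
and the type-2 relation together with $w_r=-2+\pi^l$ puts the bracket in $\mfp^{l'}$. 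No iteration is needed.

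Your parametrisation only yields $(a')^2\equiv1\pmod{\mfp^{l+1}}$ at the first pass, which is why you are forced into the extra work of bounding $t'-b_0'$. Two concrete problems with your sketch: first, in this paper $l=\lceil r/2\rceil$ (so that $K^l$ is abelian), hence $2l\geq r$ and the term $\pi^{2l}\hat\lambda b_0$ already vanishes in $\cO_r$ for \emph{both} parities of $r$---your parity remark and the claimed ``cancellation'' are spurious. Second, your iteration does not close when $l=1$ (i.e.\ $r=2$, a case that does occur for type~2): there the displayed congruence $us'+(b_0')^2+b_0'\equiv 0$ reduces modulo $\mfp$ to the tautology $s'(1+s')\equiv 0$ in $\F_2$ and gives no information. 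The paper's parametrisation avoids this because it directly produces $x_r=\pm1$, equivalently $t-b_0\equiv 0\pmod{\mfp^{l'}}$, uniformly in $r$.
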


\begin{proof}

Assume that we are in the first case (1).
Let $h \in C_{S_n}(g)$ such that $h_r$ is in $C_{G}(\hat{\beta})K^l\cap \Cimage$. Then as $h \in C_{S_n}(g)$, we have
\begin{equation*}
h=\begin{pmatrix}x+ y &   y\\
yw & x+y(1+w)
\end{pmatrix}
\end{equation*} with $x$, $y$ in $\cO_{n}$.

By the first part of \cref{intercept}, $h_r \in Z_{G}K^{l}$, so we conclude that $y \in \mfp^l$. Thus, by a change of variable we have: 

\begin{equation*}
h=\begin{pmatrix}x+\pi^{l} y &  \pi^{l} y\\
\pi^{l} yw & x+\pi^{l} y(1+w)
\end{pmatrix}.
\end{equation*}

As $w=-2+\pi^l$, we have by computation, $$1=\det(h)\equiv x^2+\pi^{2l} (xy+y^2(1+w)) \mod{\mfp^{r+1}}$$ (the congruence makes sense as $n>r$). Note that since $x\equiv 1 \mod{\mfp}$, we have
\begin{align*} 
    xy+y^2(1+w)\equiv y+ y^2 \equiv 0 \mod{\mfp},
\end{align*}
where the last congruence holds because $\cO_1=\F_2$.
We conclude that $x^2 \equiv 1 \mod{\mfp^{r+1}}$ independently of $r$ being even or odd. By \cref{lem:Centre p=2}, we thus have that $x \equiv \pm 1 \mod{\mfp^{r}}$. By a change of variables, we can write



\begin{equation*}
         h_r=\pm I \cdot \begin{pmatrix}1+\pi^l y & \pi^l y\\
\pi^l yw_r & 1+\pi^l y(1+w_r)
\end{pmatrix}
\end{equation*} with $y \in \cO_r$.

 As $\psi'_{\beta}$ is trivial on $\pm I$, we have $$\psi'_{\beta}(h_r)=\psi_{\beta}\begin{pmatrix}1+\pi^l y & \pi^l y\\
\pi^l yw_r & 1+\pi^l y(1+w_r))\end{pmatrix} 
= \psi(\pi^l y (\hat{\tau} +\hat{\lambda} w_r + \hat{\lambda}^{-1}\hat{\Delta})),$$
for any lifts $\hat{\tau},\hat{\lambda},\hat{\Delta} \in \cO_r$

As $\tau + \lambda^{-1}\Delta=2\lambda$ and $w_r=-2+\pi^l\in \cO_r$, we have:
\begin{equation*}
    \psi(\pi^l y (\hat{\tau} +\hat{\lambda} w_r + \hat{\lambda}^{-1}\hat{\Delta}))=\psi(\pi^l y (2\lambda-2\lambda))=1.
\end{equation*}
Thus, we conclude that $\psi'_{\beta}$ is trivial on $C_{G}(\hat{\beta})K^l\cap \Cimage$.

Assume now that we are in the second case (2).  Any $h$ in $C_{G}(\hat{\beta})K^l\cap \Cimage$ has the form $h=\begin{pmatrix} x & \pi^l y\\
0 & x
\end{pmatrix}$  with $x^2= 1 $, by \cref{intercept}. 
Since $\psi'_{\beta}$ is trivial on $\pm I$, we have  $$\psi'_{\beta}(h)=\psi_{\beta}\begin{pmatrix}1&  x^{-1}\pi^l y\\
0 & 1\end{pmatrix}=\psi(0)=1.$$ Thus $\psi_{\hat{\beta}}$ is trivial on $C_{G}(\hat{\beta})K^l\cap \Cimage$.

\end{proof}

\subsection{Lemmas about triviality of characters on subgroups}
In this section we prove two technical lemmas that form the main bulk of the proof of our main theorem for $\SL_2(\Z/2^n)$.

\begin{lem}
\label{lem triv on centre triv on intersection}
Let $n>r$ and $\beta$ be of type 1. Let $\theta \in \Irr(Z_{G}U^{l'-v}K^l \mid \psi_{\beta})$ and assume that $\theta$ is trivial on $ \pm I(Z_{G}\cap K^{l-1})$. 
Then there exists a $g \in S_n$ such that for any lift $\hat{\beta}$ the following three properties hold.
\begin{enumerate}
    \item
    \label{enu:theta trivial}
    $\theta$ restricted to $Z_{G}U^{l'-v}K^l\cap \Cimage$ is the trivial character;
\item \label{enu:first incl}
$C_{G}(\hat{\beta})K^l\cap \Cimage \subseteq Z_{G}U^{l'-v}K^{l}$;
 \item \label{enu:second incl} 
   if $v<l'$, then $C_{G}(\hat{\beta})K^{l'}\cap \Cimage \subseteq Z_{G}C^1 K^{l'}$.
\end{enumerate}
\end{lem}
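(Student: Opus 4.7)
The plan is to construct a single $g \in S_n$ depending on $\beta$ and $\theta$, and to deduce all three conclusions from matrix calculations modelled on \cref{lem:intersection of centralisers in ZUK} and \cref{lem: psi_hat-beta res to U^l'-vK^l cap C is the trivial char} from the $p$-odd case. The key trick is to exploit the extra room afforded by $n>r$ to match the entries of $g$ to a distinguished lift of $\beta$ supplied by $\theta$.

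By \cref{prop:There exist extns to U^l'-vK^l by formula and every extn is of this form}, the restriction $\theta|_{U^{l'-v}K^l}$ equals $\psi_{\hat{\beta}'}$ for some lift $\hat{\beta}' = \Big(\begin{smallmatrix} 0 & \hat{\lambda}' \\ (\hat{\lambda}')^{-1}\hat{\Delta}' & 0 \end{smallmatrix}\Big) \in \M_2(\cO_r)$ of $\beta$. Since $n > r$, pick $\tilde{\lambda} \in \cO_n^{\times}$ and $\tilde{\Delta} \in \cO_n$ that reduce modulo $\mfp^r$ to $\hat{\lambda}'$ and $\hat{\Delta}'$, and set
\[
g := \begin{pmatrix} 1 & \tilde{\lambda} \\ -\tilde{\lambda}^{-1}\tilde{\Delta} & 1 - \tilde{\Delta} \end{pmatrix} \in S_n,
\]
an $S_n$-analogue of the element $g_{\hat{\beta}'}$ from the $p$-odd case.

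For (2) and (3), take any lift $\hat{\beta}$ and any $h \in C_G(\hat{\beta})K^i \cap \Cimage$ with $i \in \{l, l'\}$, and write $h = \rho_{n,r}(\tilde{h})$ with $\tilde{h} \in C_{S_n}(g)$. Since $g$ is regular in $\GL_2(\cO_n)$, $\tilde{h} = xI + y(g - I)$ for some $x, y \in \cO_n$, so $\tilde{h} = \Big(\begin{smallmatrix} x & y\tilde{\lambda} \\ -y\tilde{\lambda}^{-1}\tilde{\Delta} & x - y\tilde{\Delta} \end{smallmatrix}\Big)$. Imposing $h_i \in C_{G_i}(\hat{\beta}_i)$ and comparing diagonals forces $y\tilde{\Delta} \equiv 0 \bmod \mfp^i$, hence $y_r \in \mfp^{i-v}$; a direct factorisation $h = (x_r I) u k$ with $u = \Big(\begin{smallmatrix} 1 & x_r^{-1} y_r \tilde{\lambda}_r \\ 0 & 1 \end{smallmatrix}\Big) \in U^{i-v}$ and $k \in K^i$ then yields $h \in Z_G U^{i-v} K^i \subseteq Z_G U^{l'-v} K^i$. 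Conclusion (3) follows by combining this with the inclusion $U^{l'-v} \subseteq C^1 K^{l'}$ valid for $v < l'$ (cf.\ (\ref{eq:U^l'-v is in CK^l'})).

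The substantive step is (1). Given $h \in Z_G U^{l'-v} K^l \cap \Cimage$, expanding $\det(\tilde{h}) = 1$ in $\cO_n$ gives $x^2 = 1 + y\tilde{\Delta}(x - y)$, and since $y\tilde{\Delta} \in \mfp^l$ we get $x^2 \equiv 1 \bmod \mfp^l$. Because $\val(2) = 1$ and $\cO_1 = \F_2$, the argument of \cref{lem:Centre p=2} carried out modulo $\mfp^l$ instead of $\mfp^r$ forces $x \equiv \pm 1 \bmod \mfp^{l-1}$, whence the scalar factor $x_r I$ of $h$ lies in $\pm I(Z_G \cap K^{l-1})$ and is killed by $\theta$ by hypothesis. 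The remaining factor $uk \in U^{l'-v}K^l$ contributes $\theta(uk) = \psi_{\hat{\beta}'}(uk) = \psi(\Tr(\hat{\beta}'(uk - I)))$, and the exact equalities $\tilde{\lambda}_r = \hat{\lambda}'$ and $\tilde{\Delta}_r = \hat{\Delta}'$ cause the trace to vanish identically. The main obstacle is that $g$ must play two roles simultaneously: it must depend on $\theta$ (through the specific lift $\hat{\beta}'$) so that the trace in (1) collapses, yet (2) and (3) are required uniformly for every lift $\hat{\beta}$ of $\beta$; this tension is resolved by the observation that the conclusions of (2) and (3) depend on $\hat{\beta}$ only through its image $\beta \in \M_2(\cO_{l'})$, whereas (1) crucially uses the finer level-$\cO_r$ matching.
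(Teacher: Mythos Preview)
Your argument is correct and follows the same overall strategy as the paper: identify the distinguished lift $\hat{\beta}'$ via \cref{prop:There exist extns to U^l'-vK^l by formula and every extn is of this form}, take $g\in S_n$ lifting $g_{\hat{\beta}'}$, then combine \cref{lem:intersection of centralisers in ZUK} and \cref{lem: psi_hat-beta res to U^l'-vK^l cap C is the trivial char} with the fact that the scalar part of $h$ must lie in $\pm I(Z_G\cap K^{l-1})$.

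Two small remarks. First, in your treatment of~(2) for $i=l$, the implication $y\tilde\Delta\equiv 0\bmod\mfp^l\Rightarrow y_r\in\mfp^{l-v}$ is not quite right: one only gets $y_r\in\mfp^{l-w}$ with $w=\val(\tilde\Delta_l)$, which can exceed $v$ when $v=l'$. Since $l-w\geq l'-v$ always, the conclusion $h\in Z_GU^{l'-v}K^l$ survives, so this is a harmless slip in the intermediate exponent. Second, the paper packages the step controlling the scalar factor differently: rather than computing $\det(\tilde h)=1$ in $\cO_n$ for elements of $C_{S_n}(g)$, it isolates the purely group-theoretic inclusion $Z_GK^i\cap S\subseteq \pm I(Z_G\cap K^{i-1})K^i$ (valid for any $2\leq i\leq r$, independently of $g$), and then intersects with $\Cimage\subseteq S$. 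Your determinant route reaches the same destination and is perfectly valid; the paper's formulation has the mild advantage of separating the ``$x_r\equiv\pm1$'' conclusion from the particular centraliser structure, making it reusable.
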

\begin{proof}
By (\ref{eq:U^l'-v is in CK^l'}), $Z_{G}U^{l'-v}$ is an abelian group that stabilises $\psi_{\beta}$; hence $\theta$ is an extension of $\psi_{\beta}$. We claim that for any $2\leq i \leq r$,
\begin{equation}
\label{eq: Z-inclusion} Z_{G}K^{i} \cap S \subseteq \pm I(Z_{G}\cap K^{i-1}) K^{i}. 
\end{equation}
Indeed, let $z\in Z_{G}$ and $k\in K^i$ such that $zk\in S$. Then $z_i \in Z_{S_i}$ so by \cref{lem:Centre p=2}, $z_{i-1} = \pm I$. Thus  $z \in \pm I(Z_{G}\cap K^{i-1})$, which proves the claim.

By \cref{prop:There exist extns to U^l'-vK^l by formula and every extn is of this form},
$\theta$ restricted to $U^{l'-v}K^{l}$ is equal to $\psi_{\hat{\beta}}$, for some lift $\hat{\beta}=\begin{pmatrix}0 & \hat{\lambda}\\
\hat{\lambda}^{-1}\hat{\Delta} & 0
\end{pmatrix}$ of $\beta$.
Let $g\in S_n$ be a lift of $g_{\hat{\beta}}\in S$, where the latter are the elements defined in Section~\ref{sec:A-family-of}. It follows from (\ref{eq: Z-inclusion}) with $i=l$ and the facts that $\Cimage \subseteq S$ and $U^{l'-v} \subseteq S$ that 
$$Z_{G}U^{l'-v}K^{l} \cap \Cimage \subseteq \pm I(Z_{G}\cap K^{l-1})U^{l'-v} K^{l}.$$
For (\ref{enu:theta trivial}) it is therefore sufficient to prove that $\theta$ is trivial when restricted to
$\pm I(Z_{G}\cap K^{l-1})U^{l'-v} K^{l}\cap \Cimage$.
By \cref{lem: psi_hat-beta res to U^l'-vK^l cap C is the trivial char}, $\psi_{\hat{\beta}}$ is trivial on $U^{l'-v}K^{l}\cap C_G(g_{\hat{\beta}})$.
As $\theta$ is trivial on $\pm I(Z_{G}\cap K^{l-1})$, it is thus trivial on $Z_{G}U^{l'-v}K^l\cap \Cimage$. 

Next, recall from just before \cref{lem:intersection of centralisers in ZUK} the definition and properties of the number $w$ attached to the lift $\hat{\beta}$ of $\beta$. We have $l'-v\leq l-w$, so $U^{l-w} \subseteq U^{l'-v}$ and also $\Cimage \subseteq C_G(g_{\hat{\beta}})$. 
Thus, by \cref{lem:intersection of centralisers in ZUK}, 
$$C_G(\hat{\beta})K^l \cap \Cimage \subseteq Z_{G}U^{l-w}K^l \subseteq Z_{G}U^{l'-v}K^l,$$
proving statement (\ref{enu:first incl}). 
Finally, (\ref{enu:second incl})  follows from \cref{lem:intersection of centralisers in ZUK}, using the fact that $U^{l'-v}\subseteq C^1K^{l'}$ when $v<l'$.
Note that if $g$ is fixed, then (\ref{enu:first incl}) and (\ref{enu:second incl}) hold for any choice of $\hat{\beta}$, since the group $C_G(\hat{\beta})K^l$ only depends on $\beta$.
\end{proof}

\begin{lem}
\label{lem p=2 triv on Cimage-type 1}
Let $n>r$ and $\beta$ be of type 1. Let $\rho \in \Irr(G \mid \psi_{\beta})$ and assume  that $\rho$ is trivial on $ \pm I(Z_{G}\cap K^{l'-1})$. 
Then there exists a $g \in S_n$ such that $\rho$ restricted to $\Cimage$ contains the trivial character.
\end{lem}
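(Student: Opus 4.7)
By \cref{lem:constr_regular_chars_for_GL2_or_SL2}, write $\rho=\Ind_{CK^{l'}}^G\sigma$ with $C=C_G(\hat\beta)$ for some lift $\hat\beta$ of $\beta$ and $\sigma\in\Irr(CK^{l'}\mid\psi_\beta)$. By \cref{lem: is contained in the conj rep iff sigma }\,(\ref{enu: i)}) it is enough to produce $g\in S_n$ such that $\sigma|_{CK^{l'}\cap\Cimage}$ contains $\mathbf{1}$. The plan is to follow the template of \cref{thm:p odd - r odd - v < l'} when $v<l'$ and that of \cref{thm: GL_2, p even, v=00003Dl'} or \cref{thm:p odd - r even} when $v=l'$, with the centre $Z$ of the odd-characteristic argument replaced by $Z_G$; the key new technical ingredient is \cref{lem triv on centre triv on intersection}, designed to accommodate the weaker hypothesis that $\rho$ is only trivial on $\pm I(Z_G\cap K^{l'-1})$ rather than on all of $Z_G$.

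\textbf{Case $v<l'$.} Apply the Heisenberg lift (\cref{lem:Heisenberg_induction}, valid for $\GL_2(\cO_r)$ at $p=2$) to obtain $\tilde\theta\in\Irr(Z_G C^1 K^l\mid\psi_\beta)$ such that $\eta_{\tilde\theta}\in\Irr(Z_G C^1 K^{l'})$ is a constituent of $\sigma|_{Z_G C^1 K^{l'}}$ and $\Ind_{Z_G C^1 K^l}^{Z_G C^1 K^{l'}}\tilde\theta$ is a multiple of $\eta_{\tilde\theta}$. Since $U^{l-v}\subseteq C^1 K^l$, \cref{prop:There exist extns to U^l'-vK^l by formula and every extn is of this form} identifies $\tilde\theta|_{U^{l-v}K^l}$ as $\psi_{\hat\beta}|_{U^{l-v}K^l}$ for some lift $\hat\beta\in\M_2(\cO_r)$ of $\beta$; fix this lift and let $\theta\in\Irr(Z_G U^{l'-v}K^l\mid\psi_\beta)$ be the extension with $\theta|_{U^{l'-v}K^l}=\psi_{\hat\beta}$ and $\theta|_{Z_G}$ equal to the central character of $\sigma$ (this automatically coincides with $\tilde\theta|_{Z_G}$). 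The hypothesis on $\rho$, together with $K^{l-1}\subseteq K^{l'-1}$, forces $\theta$ trivial on $\pm I(Z_G\cap K^{l-1})$, so \cref{lem triv on centre triv on intersection} provides $g\in S_n$ (a lift of $g_{\hat\beta}$) satisfying its three conclusions.

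By conclusion~\ref{enu:second incl} of \cref{lem triv on centre triv on intersection}, $CK^{l'}\cap\Cimage=Z_G C^1 K^{l'}\cap\Cimage$, so by \cref{lem: is contained in the conj rep iff sigma }\,(\ref{enu: ii)}) it suffices to show $\tilde\theta|_{Z_G C^1 K^l\cap\Cimage}=\mathbf{1}$. Any $x$ in this set lies in $Z_G U^{l-v}K^l\cap C_G(g_{\hat\beta})$, by conclusion~\ref{enu:first incl} of \cref{lem triv on centre triv on intersection} combined with \cref{lem:intersection of centralisers in ZUK}\,(2); writing $x=zy$ with $z\in Z_G$ and $y\in U^{l-v}K^l$, one has $y=z^{-1}x\in C_G(g_{\hat\beta})$, hence $\psi_{\hat\beta}(y)=1$ by \cref{lem: psi_hat-beta res to U^l'-vK^l cap C is the trivial char}. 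Then $\theta(x)=\theta(z)\psi_{\hat\beta}(y)=\theta(z)$, and conclusion~\ref{enu:theta trivial} of \cref{lem triv on centre triv on intersection} gives $\theta(x)=1$, so $\theta(z)=1$; agreement of $\theta$ and $\tilde\theta$ on $Z_G$ then yields $\tilde\theta(x)=\tilde\theta(z)\psi_{\hat\beta}(y)=\theta(z)\cdot 1=1$.

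\textbf{Case $v=l'$.} Here the Heisenberg lift degenerates, as $Z_G C^1 K^{l'}$ is too small inside $CK^{l'}$; we instead invoke \cref{thm: GL_2, p even, v=00003Dl'} when $r$ is odd (under the standing assumption $\cO_1=\F_2$, $e=1$) or \cref{thm:p odd - r even} when $r$ is even. The proofs of both theorems use the assumed triviality of $\rho$ on $Z$ only to deduce triviality of the corresponding linear character on $Z_G\cap K^{l'}$, which is implied by our hypothesis since $Z_G\cap K^{l'}\subseteq\pm I(Z_G\cap K^{l'-1})$. The principal obstacle throughout is reconciling $\tilde\theta$ (uniquely determined by $\sigma$ via the Heisenberg lift) with $\theta$ (chosen so as to feed into \cref{lem triv on centre triv on intersection}); this is resolved by arranging their agreement on $Z_G$ via the central character of $\sigma$ and on $U^{l-v}K^l$ via the common lift $\hat\beta$, which is precisely what makes the factorisation $x=zy$ give a clean computation.
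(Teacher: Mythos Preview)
Your treatment of the case $v<l'$ follows the paper's approach closely and is essentially correct. The main issue lies in your case $v=l'$.

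Your claim that the proofs of \cref{thm:p odd - r even} and \cref{thm: GL_2, p even, v=00003Dl'} ``use the assumed triviality of $\rho$ on $Z$ only to deduce triviality of the corresponding linear character on $Z_G\cap K^{l'}$'' is not correct. In both proofs, the triviality of $\rho$ on $Z_G$ is used to pass from an intersection of the form $Z_G U^{\bullet}K^{\bullet}\cap C_G(g_{\hat\beta})$ down to $U^{\bullet}K^{\bullet}\cap C_G(g_{\hat\beta})$ (see the sentence ``As $\rho$ is trivial on $Z$, so is $\sigma$ \ldots so it is enough to show the claim for $\sigma$ restricted to $UK^{l'}\cap C_G(g_{\hat\beta})$'' in the proof of \cref{thm: GL_2, p even, v=00003Dl'}, and the analogous step in \cref{thm:p odd - r even}). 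For this reduction one genuinely needs $\sigma$ trivial on \emph{all} of $Z_G$, since $Z_G\subseteq C_G(g_{\hat\beta})$; knowing triviality only on $Z_G\cap K^{l'}$ does not suffice.

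The paper handles this as follows. For $r$ even (any $v$, including $v=l'$) it applies \cref{lem triv on centre triv on intersection} directly, not \cref{thm:p odd - r even}; that lemma is precisely designed to replace the full-$Z_G$ hypothesis by triviality on $\pm I(Z_G\cap K^{l-1})$. For $r$ odd with $v=l'$ the paper does not invoke \cref{thm: GL_2, p even, v=00003Dl'} either, but adapts its proof: one restricts $\sigma$ to the subgroup $\pm I(Z_G\cap K^{l'-1})UK^{l'}$ rather than to $Z_G UK^{l'}$, and uses the crucial containment
\[
C_G(\hat\beta)K^{l'}\cap\Cimage\ \subseteq\ Z_G UK^{l'}\cap S\ \subseteq\ \pm I(Z_G\cap K^{l'-1})\,UK^{l'},
\]
the last step coming from \eqref{eq: Z-inclusion} together with $\Cimage\subseteq S$. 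It is exactly this use of $\Cimage\subseteq S$ (absent when one works with $C_G(g_{\hat\beta})$ in $G$) that makes the weaker hypothesis $\rho|_{\pm I(Z_G\cap K^{l'-1})}=\mathbf{1}$ suffice. Your proposal does not supply this step, so the $v=l'$ case has a genuine gap.
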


\begin{proof}
By \cref{lem:constr_regular_chars_for_GL2_or_SL2}, $\rho=\Ind_{C_{G}(\hat{\beta})K^{l'}}^{G}\sigma$ for some $\sigma\in\Irr(C_{G}(\hat{\beta})K^{l'}\mid\psi_{\beta})$, for any lift $\hat{\beta}$ of $\beta$. 
We will prove the following.
\begin{claimABC}
    There exists a $g\in S_n$ such that $\sigma$ restricted to $C_{G}(\hat{\beta})K^{l'}\cap \Cimage$ contains the trivial character.
\end{claimABC}
By \cref{lem: is contained in the conj rep iff sigma } this claim will imply that $\rho$ restricted to $\Cimage$ contains the trivial character.

We will treat the cases when $r$ is even or odd in turn. First, let $r$ be even, so that $l=l'$.
 Let $\theta \in \Irr(Z_{G}U^{l'-v}K^l \mid \psi_{\beta})$ be an irreducible constituent of the restriction of $\sigma$  to $Z_{G}U^{l'-v}K^l $. 

As $\rho$ is trivial on  $ \pm I(Z_{G}\cap K^{l-1})$, so is  $\theta$. Thus, by \cref{lem triv on centre triv on intersection} there exists a $g\in S_n$ such that $\theta$ restricted to $Z_{G}U^{l'-v}K^l\cap \Cimage$ is trivial and such that
 $C_{G}(\hat{\beta})K^l\cap \Cimage \subseteq Z_{G}U^{l'-v}K^{l}$, and therefore, 
$$C_{G}(\hat{\beta})K^l\cap \Cimage \subseteq Z_{G}U^{l'-v}K^l\cap \Cimage,$$
for any lift $\hat{\beta}$. Since $\theta$ restricted to $Z_{G}U^{l'-v}K^l \cap \Cimage $ contains the trivial character, $\sigma$ restricted to $C_{G}(\hat{\beta})K^l\cap \Cimage$ contains the trivial character, proving Claim~A.

Assume now that $r=2l-1$ is odd. We will break down the proof of Claim~A into two cases.

For Case~1, assume that $v<l'$. Let $C^1:=C_G(\hat{\beta})\cap K^1$. By Lemma~\ref{lem:Heisenberg_induction}, there exists an irreducible
constituent of $\sigma|_{Z_{G}C^{1}K^{l'}}$ of the form $\eta_{\theta}$ such that $\Ind_{Z_{G}C^{1}K^{l}}^{Z_{G}C^{1}K^{l'}}\theta$
is a multiple of $\eta_{\theta}$, for some extension $\theta\in\Irr(Z_{G}C^{1}K^{l})$ of $\psi_{\beta}$. 
Let $\theta'$ be the restriction of $\theta$ to $Z_{G}U^{l'-v}K^l$.  Note that $\theta'$ is trivial on $ \pm I(Z_{G}\cap K^{l-1})$, since $\rho$ is and $K^{l-1}\subseteq K^{l'-1}$.
Therefore, by applying \cref{lem triv on centre triv on intersection} to $\theta'$, there exists a $g \in S_n$ such that $\theta' $ restricted to $Z_{G} U^{l'-v}K^{l}\cap \Cimage $ is trivial,  $C_{G}(\hat{\beta})K^l\cap \Cimage \subseteq Z_{G}U^{l'-v}K^{l}$
and  $C_{G}(\hat{\beta})K^{l'}\cap \Cimage \subseteq Z_{G}C^1 K^{l'}$.
The last inclusion implies that in order to prove Claim~A, it is enough to prove the following.
\begin{claimABC}
  The character  $\eta_{\theta}$ restricted to $Z_{G} C^1 K^{l'}\cap \Cimage$ contains the trivial character.  
\end{claimABC}
By \cref{lem: is contained in the conj rep iff sigma } with $H=Z_{G}C^{1}K^{l'}$, $N=Z_{G}C^{1}K^{l}$ and $C=\Cimage$, Claim~B holds if $\theta $ restricted to $Z_{G}C^{1}K^{l}\cap \Cimage$ is trivial.
But $\theta' $ restricted to $Z_{G} U^{l'-v}K^{l}\cap \Cimage$ is trivial, so $\theta $ restricted to $Z_{G}C^{1}K^{l}\cap \Cimage \subseteq C_G(\hat{\beta})K^l \cap \Cimage$ is trivial.
This proves Claim~B and therefore Case~1 of Claim~A. 

For Case 2, 
assume that  $v=l'$, that is,  $\beta= \begin{pmatrix} 0 & \lambda\\
0 & 0
\end{pmatrix}.$
Let $\alpha \in \Irr(\pm I(Z_{G}\cap K^{l'-1})UK^{l'}\mid\psi_{\beta})$ be an irreducible constituent of the restriction of $\sigma$. Then $\alpha$ is trivial on $\pm I(Z_{G}\cap K^{l'-1})$ because $\rho$ is.

Let $\alpha'$ be an irreducible constituent of $\alpha|_{UK^{l'}}$. By Lemma~\ref{lem: Heisenberg GL_2, p=00003D2, v=00003Dl'},
there exists a $\psi_{\hat{\beta}}\in\Irr(UK^{l}\mid\psi_{\beta})$,
for some lift $\hat{\beta}=\begin{pmatrix}0 & \hat{\lambda}\\
\hat{\lambda}^{-1}\hat{\Delta} & 0
\end{pmatrix}$, an extension $\psi_{\hat{\beta}}'$ of $\psi_{\hat{\beta}}$
to $Z^{l'}UK^{l}$ and an extension $\theta$ of $\psi_{\hat{\beta}}'$
to a subgroup $R$ of $\Stab_{UK^{l'}}(\psi_{\hat{\beta}}')$ such
that $\Ind_{R}^{UK^{l'}}\theta$ is a multiple of $\alpha'$.

Let $g\in S_n$ be a lift of $g_{\hat{\beta}}$. Then it follows from \cref{lem:intersection of centralisers in ZUK} that 
$$C_{G}(\hat{\beta})K^{l'}\cap \Cimage \subseteq Z_{G}UK^{l'}.$$
It follows from (\ref{eq: Z-inclusion}) with $i=l'$ and the facts that $\Cimage \subseteq S$ and $U \subseteq S$ that
$$ C_{G}(\hat{\beta})K^{l'} \cap \Cimage \subseteq Z_{G}UK^{l'} \cap S \subseteq \pm I(Z_{G}\cap K^{l'-1})UK^{l'}.$$ 
It follows that $\alpha$ restricted to $\pm I(Z_{G}\cap K^{l'-1})UK^{l'} \cap  \Cimage $ contains the trivial character if the restriction of $\alpha$ to
$$ UK^{l'} \cap \pm I(Z_{G}\cap K^{l'-1})\Cimage \subseteq  UK^{l'} \cap C_G(g_{\hat{\beta}})$$
contains the trivial character. Furthermore, this holds if $\alpha'$  contains the trivial character on $UK^{l'} \cap C_G(g_{\hat{\beta}})$.
Thus, since $\Ind_{R}^{UK^{l'}}\theta$ is a multiple of $\alpha'$, Lemma~\ref{lem: is contained in the conj rep iff sigma }\,(\ref{enu: ii)}) 
with $H=UK^{l'}$, $N=R$ and $C=C_G(g_{\hat{\beta}})$ implies that Claim~A holds if the following holds.
\begin{claimABC}
$\theta$ restricted to $R\cap C_G(g_{\hat{\beta}})$ is trivial.
\end{claimABC}
Since $R\subseteq \Stab_{UK^{l'}}(\psi_{\hat{\beta}}')$,
\[
R\cap C_G(g_{\hat{\beta}}) \subseteq \Stab_{UK^{l'}}(\psi_{\hat{\beta}}')\cap C_G(g_{\hat{\beta}})=Z_G^{l'}UK^{l}\cap C_G(g_{\hat{\beta}}),
\]
where the equality comes from Lemma~\ref{lem:intersection Stab cap C(g) is ZUK^l cap C(g) for GL_2, v=00003Dl'}
(which applies since we have assumed that $\cO_{1}=\F_{2}$), so to prove Claim~C it
is enough to prove that $\theta$ restricted to $Z_G^{l'}UK^{l}\cap C_{G}(g_{\hat{\beta}})$
is trivial.

Since $\theta|_{UK^{l}}=\psi_{\hat{\beta}}$ and $\theta$ is necessarily
trivial on $Z_G^{l'}$ (because $\rho$ is), we are reduced to showing that $\psi_{\hat{\beta}}$
contains the trivial character when restricted to 
\[
U^ {}K^{l}\cap C_G(g_{\hat{\beta}}).
\]
This last statement indeed holds by Lemma~\ref{lem: psi_hat-beta res to U^l'-vK^l cap C is the trivial char},
so this proves Claim~C and therefore Case~2 of Claim~A. 
\end{proof}

\setcounter{claimABC}{0}
We now prove the corresponding lemma for types 2 and 3.
\begin{lem}
\label{lem p=2 triv on Cimage-type 2 and 3}
Let $n>r$ and $\beta$ be of type 2 or 3. Let $\rho \in \Irr(G \mid \psi_{\beta})$ and assume  that $\rho$ is trivial on $ \pm I$. 
Then there exists a $g \in S_n$ such that $\rho$ restricted to $\Cimage$ contains the trivial character.
\end{lem}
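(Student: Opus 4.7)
The plan is to mirror closely the structure of the proof of \cref{lem p=2 triv on Cimage-type 1}, replacing the two inputs that were specific to type~1, namely \cref{lem:intersection of centralisers in ZUK} and \cref{lem: psi_hat-beta res to U^l'-vK^l cap C is the trivial char}, by their analogues for types~2 and~3, which are \cref{intercept} and \cref{tau and delta units}, built in the previous subsection precisely for this purpose.

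First I would use \cref{lem:constr_regular_chars_for_GL2_or_SL2} to write $\rho=\Ind_{C_{G}(\hat{\beta})K^{l'}}^{G}\sigma$ for some $\sigma\in\Irr(C_{G}(\hat{\beta})K^{l'}\mid\psi_{\beta})$ and any lift $\hat{\beta}$. I would then take $\hat{\beta}$ so that its entries are units of the form required by \cref{intercept}, and take $g\in S_n$ to be a lift of the element $g_{\hat{\beta}}$ attached to the type of $\beta$ as in \cref{tau and delta units}. By \cref{lem: is contained in the conj rep iff sigma }\,(\ref{enu: i)}) it then suffices to prove that $\sigma$ restricted to $C_{G}(\hat{\beta})K^{l'}\cap \Cimage$ contains $\mathbf{1}$. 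Note that for both types $v=\min(\val(\tau),\val(\Delta))=0<l'$ as soon as $r\geq 2$, so the case analysis only splits on the parity of $r$.

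When $r=2l$ is even, $l=l'$. I would pick an irreducible constituent $\theta$ of $\sigma|_{Z_{G}K^{l}}$ (one-dimensional, since $Z_{G}K^{l}$ is abelian); it extends $\psi_{\beta}$, and is trivial on $\pm I$ because $\rho$ is. \cref{intercept} with $i=l$ gives $C_{G}(\hat{\beta})K^{l}\cap \Cimage\subseteq Z_{G}K^{l}$, and combined with the trivial reverse inclusion this yields $C_{G}(\hat{\beta})K^{l}\cap \Cimage=Z_{G}K^{l}\cap \Cimage$. By \cref{tau and delta units}, $\theta$ is trivial on the former, hence on the latter, and since $\sigma|_{Z_{G}K^{l}}$ contains $\theta$, this gives triviality of $\sigma$ on $C_{G}(\hat{\beta})K^{l}\cap \Cimage$.

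When $r=2l-1$ is odd, I would insert a Heisenberg lift exactly as in Case~1 of the proof of \cref{lem p=2 triv on Cimage-type 1}. By \cref{lem:Heisenberg_induction} there is an extension $\theta\in\Irr(Z_{G}C^{1}K^{l})$ of $\psi_{\beta}$ such that $\Ind_{Z_{G}C^{1}K^{l}}^{Z_{G}C^{1}K^{l'}}\theta$ is a multiple of the corresponding $\eta_{\theta}\in\Irr(Z_{G}C^{1}K^{l'})$, and I would choose $\theta$ so that $\eta_{\theta}$ is a constituent of $\sigma|_{Z_{G}C^{1}K^{l'}}$; triviality of $\rho$ on $\pm I$ propagates down to $\sigma$, $\eta_{\theta}$ and $\theta$. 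Two applications of \cref{intercept}, at levels $i=l'$ and $i=l$, collapse the tower: they yield $C_{G}(\hat{\beta})K^{l'}\cap \Cimage=Z_{G}C^{1}K^{l'}\cap \Cimage$ and $Z_{G}C^{1}K^{l}\cap \Cimage=Z_{G}K^{l}\cap \Cimage$. Using \cref{lem: is contained in the conj rep iff sigma }\,(\ref{enu: ii)}) for the Heisenberg step, everything reduces to showing $\theta|_{Z_{G}K^{l}\cap \Cimage}$ is trivial, which is precisely \cref{tau and delta units} applied to the one-dimensional character $\theta|_{Z_{G}K^{l}}$. I do not expect a genuine obstacle, as \cref{intercept} and \cref{tau and delta units} have been tailored for this argument; the only delicate point is the intersection bookkeeping in the odd case, ensuring that the Heisenberg tower $Z_{G}K^{l}\subseteq Z_{G}C^{1}K^{l}\subseteq Z_{G}C^{1}K^{l'}$ collapses to a single intersection with $\Cimage$ at the bottom, so that restricting $\theta$ to $Z_{G}K^{l}$ is sufficient to apply \cref{tau and delta units}.
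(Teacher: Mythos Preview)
Your proposal is correct and follows essentially the same route as the paper's proof: reduce to $\sigma$ via \cref{lem: is contained in the conj rep iff sigma }, use \cref{intercept} to collapse the relevant intersections into $Z_{G}K^{i}$, and conclude via \cref{tau and delta units}; in the odd case both you and the paper insert the Heisenberg lift from \cref{lem:Heisenberg_induction} exactly as in Case~1 of \cref{lem p=2 triv on Cimage-type 1}. One small terminological slip: the elements $g$ furnished by \cref{tau and delta units} are not the $g_{\hat{\beta}}$ of Section~\ref{sec:A-family-of} (that notation is reserved for type~1), and for type~2 the element is given directly in $S_n$ rather than as a lift from $S_r$; but this does not affect the argument.
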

\begin{proof}
Just like in the proof of \cref{lem p=2 triv on Cimage-type 1}, $\rho=\Ind_{C_{G}(\hat{\beta})K^{l'}}^{G}\sigma$ for some $\sigma\in\Irr(C_{G}(\hat{\beta})K^{l'}\mid\psi_{\beta})$, for any lift $\hat{\beta}$ of $\beta$, and  
we reduce to proving: 
\begin{claimABC}
    There exists a $g\in S_n$ such that $\sigma$ restricted to $C_{G}(\hat{\beta})K^{l'}\cap \Cimage$ contains the trivial character.
\end{claimABC}
Let $g$ be as in \cref{tau and delta units}\,(1) for $\beta$ of type 2 and as in (2) for $\beta$ of type 3, respectively.
It follows from \cref{intercept} that 
$$C_{G}({\hat{\beta}})K^{i}\cap \Cimage\subseteq Z_{G}K^{i},$$
for $i\in\{l,l'\}$.

First, let $r$ be even, so that $l=l'$.
Let $\psi'_{\beta} \in \Irr(Z_{G}K^l\mid \psi_{\beta})$ be an irreducible component of the restriction of $\sigma$ to $Z_{G}K^l$. Then $\psi'_{\beta}$ is trivial on $\pm I$ since $\rho$ is. Claim~A follows from \cref{tau and delta units}.

Assume now that $r$ is odd. 
By Lemma~\ref{lem:Heisenberg_induction}, there exists an irreducible
constituent of $\sigma|_{Z_{G}C^{1}K^{l'}}$ of the form $\eta_{\theta}$ such that $\Ind_{Z_{G}C^{1}K^{l}}^{Z_{G}C^{1}K^{l'}}\theta$
is a multiple of $\eta_{\theta}$, for some extension $\theta\in\Irr(Z_{G}C^{1}K^{l})$ of $\psi_{\beta}$. 
Let $\psi'_{\beta} \in \Irr(Z_{G}K^l\mid \psi_{\beta})$ be an irreducible component of the restriction of $\theta$ to $Z_{G}K^l$. Then $\psi'_{\beta}$ is trivial on $\pm I$ since $\rho$ is. By the same argument as in Case~1 in the proof of \cref{lem p=2 triv on Cimage-type 1}, the proof of Claim~A is reduced to proving that $\theta$ restricted to $Z_{G}C^{1}K^{l}\cap \Cimage$ is trivial.
But by \cref{tau and delta units}, $\psi'_{\beta} $ restricted to $Z_{G}K^{l}\cap \Cimage$ is trivial, so $\theta $ restricted to $Z_{G}C^{1}K^{l}\cap \Cimage \subseteq C_G(\hat{\beta})K^l \cap \Cimage$ is trivial.
\end{proof}

\subsection{The main theorem for $\mathrm{SL}_2(\mathbb{Z}/2^r)$}
Using the lemmas in the preceding subsections, we can now finish the proof of the main theorem for $\SL_2(\Z/2^n)$.
The reason why we state the theorem below for $\SL_2(\Z/2^n)$ rather than for $S=S_r$ is that with this notation almost all the essential work will be carried out in $G$ and $S$ (as in the previous sections of this paper), keeping the notation lighter than if we had worked with some $G_i$ and $S_i$ for $r>i$.

\begin{lem}
\label{lem: Z_G_cap_K^l'-1_cap_S in pm I K^l}
Assume that $e=1$ and $\cO_{1}=\F_2$. Then $Z_{G} \cap K^{l'-1} \cap S \subseteq \pm I K^l$.  
\end{lem}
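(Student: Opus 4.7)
The plan is to observe that the intersection actually reduces to a statement about the centre $Z$ of $S$, which by \cref{lem:Centre p=2} is already quite small, and then to compare exponents.

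First I would note that $Z_G \cap S$ consists of scalar matrices $aI$ with $a^{2}=1$, which is by definition the centre $Z$ of $S=\SL_{2}(\cO_{r})$. Hence
\[
Z_{G} \cap K^{l'-1} \cap S \;=\; Z \cap K^{l'-1} \;\subseteq\; Z.
\]
So the hypothesis $aI \in K^{l'-1}$ is actually redundant, and it is enough to prove the stronger containment $Z \subseteq \pm I \cdot K^{l}$.

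Next, using $e=1$, I would invoke \cref{lem:Centre p=2} to write $Z = (\pm 1 + \mfp^{r-1})I$. Thus any $aI \in Z$ has $a = \varepsilon + y$ with $\varepsilon \in \{\pm 1\}$ and $y \in \mfp^{r-1}$. All that remains is the numerical check that $r-1 \geq l$ for every $r \geq 2$: when $r=2l$ is even this is $2l - 1 \geq l$, i.e.\ $l \geq 1$; when $r = 2l-1$ is odd this is $2l-2 \geq l$, i.e.\ $l \geq 2$, which corresponds to $r \geq 3$. In either case the inequality holds, so $\mfp^{r-1} \subseteq \mfp^{l}$, hence $y \in \mfp^{l}$, and $aI = \varepsilon I \cdot (1 + \varepsilon^{-1}y)I \in \pm I \cdot K^{l}$.

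There is no real obstacle here; the only thing worth flagging is that the assumption $\cO_{1}=\F_{2}$ is not used in this argument—only $e=1$ is needed, through \cref{lem:Centre p=2}. The hypothesis $\cO_{1}=\F_{2}$ is presumably carried over as part of the standing hypotheses of the section for uniformity with its intended applications.
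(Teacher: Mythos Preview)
Your proof is correct and in fact cleaner than the paper's. You reduce immediately to $Z_G \cap S = Z$ and then invoke \cref{lem:Centre p=2} to get $Z = (\pm 1 + \mfp^{r-1})I$, after which the inclusion $Z \subseteq \pm I\, K^{l}$ follows from the easy numerical check $r-1 \geq l$. In particular you observe, correctly, that the constraint $\cap\, K^{l'-1}$ plays no role and that only $e=1$ is used. The paper, by contrast, does not go through \cref{lem:Centre p=2}: it writes a generic element as $(1+\pi^{l'-1}x)I$, imposes the determinant condition, and then carries out a case analysis on $l'$ (the cases $l'>2$, $l'=2$, and $l'=1$ with $r\in\{2,3\}$). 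That argument is more hands-on and, in the $l'=1$, $r=3$ case, actually uses $\cO_1=\F_2$ to conclude $1+2z \equiv \pm 1 \pmod{\mfp^2}$. Your route avoids this case split entirely and shows the hypothesis $\cO_1=\F_2$ is superfluous for this particular lemma; the paper's route is self-contained but longer and slightly less general.
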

\begin{proof}
    Let $h \in Z_{G} \cap K^{l'-1} \cap S$ so that $h= \begin{pmatrix} 1+\pi^{l'-1}x & 0\\
0 & 1+\pi^{l'-1} x
\end{pmatrix}$, for $x\in \cO_r$. Then
$$1=\det(h)=1+2\pi^{l'-1}x+\pi^{2l'-2} x^2,$$
so writing $2=u\pi$ for some unit $u\in \cO_r^{\times}$, we get $$x(u+\pi^{l'-2}x)\equiv 0 \mod{\mfp^{l}}.$$
If $l'>2$, then $u+\pi^{l'-2}x$ is always a unit and thus $x \equiv 0 \mod{\mfp^{l}}$ so $h \in K^l$. 

Next, if  $l'=2$, then the above congruence says $x(u+x)\equiv 0 \mod{\mfp^{l}}.$ If $x$ is not unit then $u+x$ is a unit and so $x \equiv 0 \mod{\mfp^{l}}$ so $h \in K^l$. If $x$ is a unit then $u+x \equiv 0 \mod{\mfp^{l}}$ so
$\pi x\equiv -\pi u = -2 \mod{\mfp^{l}}$. Thus, $1+\pi x\equiv -1 \mod{\mfp^{l}}$ so $h \in \pm I K^l$. 

Finally, if $l'=1$, then $r=2$ or $r=3$ and the equation $\det(h)=1$ says  $x(2+x)=0$. This  has no solution for $x$ a unit, so $x=2 z$ for some $z$. Hence, if $r=2$ then $h \in K^l$. When $r=3$, we have $l=2$ and $1+x=1+2z \equiv \pm 1  \mod{\mfp^{2}}$ because $1+(1+2z)=2(1+z)\equiv 0 \mod{\mfp^2}$ when $z$ is a unit and if $z$ is not a unit, then $1+2z\equiv 1 \mod{\mfp^2}$. We conclude that for all $l'\geq 1$, $h \in \pm I K^l$.
\end{proof}
\begin{thm}
\label{thm:Main for SL_2}
Assume that $e=1$ and $\cO_{1}=\F_2$. Let $\hat{\chi} \in\Irr(\SL_{2}(\cO_{n}))$, $n\geq 1$, be a representation that is trivial on the centre $Z_n$. Then $\hat{\chi}$ is contained in the conjugation character of $\SL_{2}(\cO_{n})$. 
\end{thm}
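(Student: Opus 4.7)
We argue by strong induction on $n$. For the base case $n=1$, the group $S_1 = \SL_2(\F_2) \cong S_3$ has trivial centre, and every irreducible character lies in the conjugation representation by \cite[Corollary~5]{tiep2023conjugation}. For the inductive step, let $\hat\chi \in \Irr(S_n)$ be trivial on $Z_n$, and let $r \in \{1,\ldots,n\}$ be minimal such that $\hat\chi = \Inf^{S_n}_{S_r}\chi$ for a primitive $\chi \in \Irr(S_r)$; by \cref{lem:Centre p=2}, $\chi$ is trivial on $\bar Z = \rho_{n,r}(Z_n)$, which equals $Z_r$ if $r=n$ and $\pm I$ if $r<n$. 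When $r = 1$, the base case yields $g_1 \in S_1$ with $\chi|_{C_{S_1}(g_1)}$ containing the trivial character; lifting $g_1$ to any $g \in S_n$ and noting $\bar C(g) \subseteq C_{S_1}(g_1)$, the trivial character still appears in $\chi|_{\bar C(g)}$, and \cref{inflation} finishes. So assume $r \geq 2$.

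By \cref{lem:constr_regular_chars_for_GL2_or_SL2}, $\chi$ is regular, lying above $\psi_{[\beta]}$ for some regular $\beta \in \M_2(\cO_{l'})$. By \cref{lem:conjugation}, after a central translation (preserving $[\beta]$) and $\SL_2(\cO_{l'})$-conjugation (preserving the $S_r$-orbit of $\psi_{[\beta]}$), we may assume $\beta$ is of type 1, 2, or 3. In Case $r=n$, triviality of $\chi$ on $Z \supseteq K^l \cap Z$ lets us apply \cref{lem: tau can be taken as 0-1} to reduce to type 1. Pick $\rho \in \Irr(G \mid \psi_\beta)$ (where $G := G_r$) lying above $\chi$, obtained by Clifford theory from $\Ind_S^G \chi$. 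For types 2 and 3 (only occurring when $r<n$), the hypothesis of \cref{lem p=2 triv on Cimage-type 2 and 3} that $\rho$ be trivial on $\pm I$ is automatic, since $\chi$ is trivial on $\pm I$. For type 1, the hypothesis of \cref{lem p=2 triv on Cimage-type 1} requires $\rho$ trivial on $\pm I(Z_G \cap K^{l'-1}) = \pm(1+\mfp^{l'-1})I$; since $\tau=0$ makes $\psi_\beta$ trivial on $(1+\mfp^l)I$, Clifford's theorem forces $\chi$ trivial on $Z_r \cap K^l = (1+\mfp^{r-1})I$, and together with $\chi$ trivial on $\pm I$ and the decomposition $Z_r = \pm I \cdot (Z_r \cap K^l)$ we obtain $\omega_\rho|_{Z_r} = 1$. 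Since $Z_r$ is precisely the kernel of the squaring map $\pm(1+\mfp^{l'-1}) \to 1+\mfp^{l'}$, an appropriate character $\kappa$ of $\cO_r^\times$ gives a twist $\rho \otimes (\kappa \circ \det)$ trivial on $\pm(1+\mfp^{l'-1})I$; as $(\kappa \circ \det)|_S = 1$, the twist still lies above $\chi$.

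Apply the relevant lemma to obtain $g \in S_n$ with $\rho|_{\bar C(g)}$ containing the trivial character. The constituents of $\rho|_S$ form a single $G_r$-orbit of $\chi$ by Clifford theory (using normality of $S$ in $G$); if the trivial constituent in $\rho|_{\bar C(g)}$ arises via a conjugate $\leftexp\gamma\chi$ with $\gamma \in G_r$, lift $\gamma$ to $\tilde\gamma \in G_n$ and set $g'' = \leftexp{\tilde\gamma^{-1}}g \in S_n$ (using normality of $S_n$ in $G_n$). Then $\bar C(g'') = \leftexp{\gamma^{-1}}\bar C(g)$, so $\chi|_{\bar C(g'')}$ contains the trivial character, and \cref{inflation} completes the step. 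The main technical concern is Case $r=n$, where \cref{lem p=2 triv on Cimage-type 1} must be invoked with $n=r$ in spite of its standing hypothesis $n > r$: its proof reduces to \cref{lem triv on centre triv on intersection}, \cref{lem:intersection Stab cap C(g) is ZUK^l cap C(g) for GL_2, v=00003Dl'}, and \cref{lem: psi_hat-beta res to U^l'-vK^l cap C is the trivial char}, none of which genuinely requires $n > r$ once one takes $g = g_{\hat\beta}$ directly in $S_r$, while the congruence-modulo-$\mfp^{r+1}$ argument of \cref{tau and delta units} (where $n>r$ is essential) is only invoked in types 2 and 3, cases excluded for $r=n$ by the reduction above.
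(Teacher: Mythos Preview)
Your proof follows the paper's strategy closely, but two steps need tightening.

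First, the assertion that one can ``pick $\rho \in \Irr(G \mid \psi_\beta)$ lying above $\chi$'' is not immediate from Clifford theory: a constituent $\rho$ of $\Ind_S^G\chi$ a priori lies only above some $\psi_{\beta+xI}$ (since $\chi$ determines only the coset $[\beta]$), and \cref{lem p=2 triv on Cimage-type 1,lem p=2 triv on Cimage-type 2 and 3} require $\rho$ above the \emph{specific} $\psi_\beta$ of the stated type. The paper supplies this via a short Mackey computation (its Claim~A), showing that $\langle \Res_{\bar Z K^l}\Ind_S^G\chi,\psi'_\beta\rangle \geq 1$ for an extension $\psi'_\beta$ of $\psi_\beta$ trivial on $\bar Z$.

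Second, in the type-1 case your twist $\rho' = \rho\otimes(\kappa\circ\det)$ shifts $\psi_\beta$ to $\psi_{\beta+cI}$ on $K^l$, so $\rho'$ need not lie above a type-1 character, and \cref{lem p=2 triv on Cimage-type 1} does not directly apply to it. The fix is easy---$\rho'$ is trivial on $(1+\mfp^l)I \subseteq \pm(1+\mfp^{l'-1})I$, so \cref{lem: tau can be taken as 0} yields a new type-1 $\beta'$---but it must be said. In fact, since $\omega_\rho|_{Z_r}=1$ and $Z_r$ is the full $2$-torsion of $\cO_r^\times$, $\omega_\rho$ is a square in the dual group, so one can twist to make $\rho'$ trivial on all of $Z_G$; this is effectively what the paper does for $n=r$ (via the injection $S/Z \hookrightarrow G/Z_G$), after which it invokes the $\GL_2$ results Theorems~\ref{thm:p odd - r even}, \ref{thm:p odd - r odd - v < l'} and \ref{thm: GL_2, p even, v=00003Dl'} directly rather than re-examining the proof of \cref{lem p=2 triv on Cimage-type 1}. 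For $n>r$, type~1, the paper uses \cref{lem: Z_G_cap_K^l'-1_cap_S in pm I K^l} and the injection $S/[\pm I(Z_G\cap K^{l'-1}\cap S)] \hookrightarrow G/[\pm I(Z_G\cap K^{l'-1})]$ to produce $\rho$ with the required central triviality; your twisting route is a legitimate alternative once the gap above is closed. Your final descent from $\rho$ to $\chi$ via a $G_n$-conjugate of $g$ is equivalent to the paper's Mackey argument.
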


\begin{proof}
 Let $K^r$ be the maximal subgroup among the $K^i$ contained in the kernel of $\hat{\chi}$, so $\hat{\chi}=\Inf^{S_n}_{S}(\chi)$ for some primitive $\chi\in\Irr(S)$. Since $\hat{\chi}$ is trivial on $Z_n$, it follows that $\chi$ is trivial on $\Zimage$. By \cref{inflation}, it is enough to show that $\chi$ restricted to $\Cimage$ contains the trivial character for some $g \in S_n$. 
 
 First we let $r = 1$.  Then $S=\SL_2(\F_2)$ and $Z=\pm I$. It is easily checked (see \cite[Corollary~5]{tiep2023conjugation}) that since $\chi$ is trivial on $Z$, there exists a $\bar{g} \in S$ such that $\chi$ restricted to $C_S(\bar{g})$ contains the trivial character. Picking $g \in S_n$ such that $\rho_1(g)=\bar{g}$ we have that $\chi$ restricted to $\Cimage \subseteq C_S(\bar{g})$ contains the trivial character (cf.~\cite[Lemma~4]{tiep2023conjugation}),  proving the theorem when $r=1$.
 
 Assume from now on that $r\geq 2$. Let $\beta \in\M_{2}(\cO_{l'})$ be such that $\chi \in \Irr(S \mid \psi_{[\beta]})$. By \cref{lem:constr_regular_chars_for_GL2_or_SL2}, $\beta$ is regular. Thus, by \cref{lem:conjugation}, we can assume that $\beta$ is equal to one of the following matrices: 
\begin{enumerate}

\item $\begin{pmatrix} 0 & \lambda\\
\lambda^{-1}\Delta & 0 
\end{pmatrix}$;
    
\item $\begin{pmatrix}\tau & \lambda\\
\lambda^{-1}\Delta & 0
\end{pmatrix}$ such that $\tau + \lambda^{-1}\Delta=2\lambda$;

\item $\begin{pmatrix}\tau & \lambda\\
0 & 0
\end{pmatrix}$ with $\tau$ unit. 
\end{enumerate}
Note that when $n=r$, \cref{lem: tau can be taken as 0-1} implies that $\beta$ can be taken to be of type 1, since $\chi$ is trivial on $Z = \Zimage$.
\setcounter{claimABC}{0}
\begin{claimABC}
 There exists a $\rho \in \Irr(G \mid \chi)$ such that $\rho \in \Irr(G \mid \psi_{\beta'})$, with $\beta'$ of the same type as $\beta$.
\end{claimABC}
We will break down the proof of Claim~A in two cases where $n=r$ and $n>r$, respectively.
Assume that $n=r$ and that  $\beta$ is chosen of type 1. As $\chi \in \Irr(S \mid \psi_{[\beta]})$ is trivial on $Z$, the injection $S/Z \hookrightarrow G/Z_{G}$ implies that there is a $\rho \in \Irr(G \mid \chi)$ that is trivial on $Z_{G}$ and $\rho \in \Irr(G \mid \psi_{\beta+xI})$. Since $\rho$ is trivial on $Z_{G}$, and $\beta+xI$ is regular, \cref{lem: tau can be taken as 0} implies that $\rho \in \Irr(G\mid \psi_{\beta'})$ with
$\beta'$ of type 1. Thus, we have shown Claim~A for $n=r$. 

Assume now that  $n>r$ and recall that in this case $\Zimage=\pm I$. If $l \neq 1$, note that $\Zimage K^l\cong \Zimage \times K^l$ and when $l =1$, we have $\Zimage \subseteq K^l $. Thus, in either case there exists an extension $\psi'_{\beta}$  of $\psi_{\beta}$ to $\Zimage K^l$ that is trivial on $\Zimage$. We then have
\begin{align*}
  \langle \Res_{\Zimage K^l}\Ind_{S}^{G} \chi, \psi'_{\beta} \rangle 
  & \geq \langle \Ind_{S \cap \Zimage K^l}^{\Zimage K^l}( \Res_{S \cap \Zimage K^l} \chi), \psi'_{\beta} \rangle \\
  &= \langle \Res_{S \cap \Zimage K^l} \chi, \Res_{S \cap \Zimage K^l}\psi'_{\beta} \rangle\\
  &\geq \langle  \Res_{S \cap \Zimage K^l}\psi'_{\beta},  \Res_{S \cap \Zimage K^l}\psi'_{\beta} \rangle =1.
\end{align*}
Thus there exists an irreducible constituent $\rho$ of $\Ind_{S}^{G} \chi$, that is, $\rho\in \Irr(G\mid \chi)$, such that $\Res_{\Zimage K^l}\rho$ contains $\psi'_{\beta}$ (note that $\beta'$ can thus be taken to be just $\beta$ in this case). We conclude that Claim~A is now established in all cases.

\begin{claimABC}
 Assume we  have $\beta$ of type 1 and $n>r$. There exists a $\rho \in \Irr(G \mid \chi)$ such that $\rho \in \Irr(G \mid \psi_{\beta'})$, with $\beta'$ of type 1, and $\rho$ is trivial on $\pm I (Z_{G} \cap K^{l'-1})$. 
\end{claimABC}
We first show that $\chi$ is trivial on $\pm I (Z_{G} \cap K^{l'-1} \cap S)$.
It follows from \cref{lem: Z_G_cap_K^l'-1_cap_S in pm I K^l} that $\pm I (Z_{G}\cap K^{l'-1}\cap S) \subseteq \pm IK^l$ and thus
$$\pm I (Z_{G} \cap K^{l'-1} \cap S) \subseteq \pm I  (Z_{G} \cap K^l).$$
By the form of $\beta$, $\psi_{\beta}$ is trivial on $Z_{G} \cap K^l$, so $\chi$ is trivial on $\pm I (Z_{G} \cap K^{l'-1} \cap S)$, as $\chi$ restricted to $Z_{G} \cap K^{l'-1} \cap S$ is a multiple of $\psi_{\beta}$ restricted to this group.

As $\chi \in \Irr(S \mid \psi_{[\beta]})$ is trivial on $\pm I(Z_{G} \cap K^{l'-1} \cap S)$, the injection 
$$S/[\pm I(Z_{G} \cap K^{l'-1} \cap S)]\hookrightarrow G/[\pm I(Z_{G} \cap K^{l'-1})]$$
implies that there is a $\rho \in \Irr(G \mid \psi_{\beta+xI})$, for some $x\in \cO_{l'}$, lying above $\chi$, such that $\rho$  is trivial on $\pm I(Z_{G} \cap K^{l'-1})$.
As $\rho$ is trivial on $Z_{G} \cap K^{l} \subseteq Z_{G} \cap K^{l'-1}$, by \cref{lem: tau can be taken as 0}  we have that $\rho$ contains $\psi_{\beta'}$ for some  $\beta'$ of type 1. Thus Claim B follows.

We will now show that $\chi$ contains the trivial character when restricted to $\Cim(g')$ for some $g' \in S_n$. Let $\rho \in \Irr(G \mid \chi)$ be such that $\rho \in \Irr(G \mid \psi_{\beta})$  with $\beta$ one of the three types (such a $\rho$ exists by Claim~A). Since $\chi$ is trivial on $\Zimage$, so is $\rho$.
Note that when $n=r$ we can assume by the proof of Claim~A that  $\rho$ is trivial on $Z_{G}$. Thus, for $n=r$, where we have $\beta$ of type 1 and $\Cimage=C_G(g)$,
Theorems \ref{thm:p odd - r even},  \ref{thm:p odd - r odd - v < l'} and  \ref{thm:p even - r odd - v=00003Dl'} yield a $g \in S_n$ such that $\rho$ restricted to $\Cimage$ contains the trivial character.
If $\beta$ is of type 1 and $n>r$, we can moreover let $\rho$ be as in Claim~B. 
Then, for $n>r$, Lemmas \ref{lem p=2 triv on Cimage-type 1} and \ref{lem p=2 triv on Cimage-type 2 and 3} yield a $g \in S_n$ such that $\rho$ restricted to $\Cimage$ contains the trivial character.
 Thus, for all $n\geq r$ and all types, $\langle \rho, \Ind_{\Cimage}^{G}\mathbf{1}\rangle \neq 0$ and therefore  $\langle \Ind_{S}^{G}\chi, \Ind_{\Cimage}^{G}\mathbf{1}\rangle \neq 0$. Hence, by the Mackey intertwining number formula,
\[
0 \neq \langle \chi,\Res_{S} \Ind_{\Cimage}^{G}\mathbf{1}\rangle
=\sum_{h\in S \backslash G/\Cimage}\langle \chi,\Ind_{S \cap \leftexp{h}{\Cimage}}^{S}\mathbf{1}\rangle.
\]
Thus, as $\Cimage \subseteq S$, there is an $h \in G$ such that $$\langle \chi,\Ind_{ \leftexp{h}{\Cimage}}^{S}\mathbf{1}\rangle \neq 0.$$
Let $\Hat{h}$ be a lift of $h$ to $G_n$. We have $$S \cap\leftexp{h}{\Cimage}=\leftexp{h}{\Cimage}= \rho_{n,r}(\leftexp{\Hat{h}}{C_{S_n}(g)})=\Cim(\leftexp{\Hat{h}}g).$$

Let $g':=\leftexp{\Hat{h}}g \in S_n$. Then
\[
0 \neq \langle \chi,\Ind_{\Cim(g')}^{S}\mathbf{1}\rangle = \langle \chi|_{\Cim(g')},\mathbf{1}\rangle.
\]
Thus $\chi$ contains the trivial character when restricted to $\Cim(g')$ and the proof is complete.
\end{proof}

\bibliographystyle{alex}
\bibliography{alex}

\end{document}